\documentclass[10pt,reqno]{amsart}
\usepackage[utf8]{inputenc}
\usepackage[english]{babel}
\usepackage{amsmath, amssymb, amsthm}
\usepackage{mathrsfs}
\usepackage[shortlabels]{enumitem}
\usepackage{graphicx}
\usepackage{tikz-cd}
\usepackage{xcolor}
\usepackage{mathtools}
\numberwithin{equation}{section} 

\usepackage{diagbox} 
\usepackage{tensor}  

\mathtoolsset{showonlyrefs=true}

\usepackage{hyperref} 

\usepackage[alphabetic]{amsrefs} 



\theoremstyle{plain}
\newtheorem{thm}{Theorem}[section]
\newtheorem{prop}[thm]{Proposition}
\newtheorem{lemma}[thm]{Lemma}

\newtheorem{cor}[thm]{Corollary}

\theoremstyle{definition}
\newtheorem{defin}[thm]{Definition}

\newtheorem{rmk}[thm]{Remark}

\allowdisplaybreaks


\newcommand{\R}{\mathbb{R}}
\newcommand{\C}{\mathbb{C}}

\DeclareMathOperator{\supp}{supp}

\newcommand{\eps}{\varepsilon}

\DeclareMathOperator{\real}{Re}

\DeclareMathOperator{\vol}{Vol}


\newcommand{\A}{\mathbb{A}}

\DeclareMathOperator{\tr}{tr}


\def\XXint#1#2#3{{\setbox0=\hbox{$#1{#2#3}{\int}$ }
\vcenter{\hbox{$#2#3$ }}\kern-.6\wd0}}

\usepackage{scalerel,stackengine}
\stackMath
\newcommand\hhat[1]{%
\savestack{\tmpbox}{\stretchto{%
  \scaleto{%
    \scalerel*[\widthof{\ensuremath{#1}}]{\kern.1pt\mathchar"0362\kern.1pt}%
    {\rule{0ex}{\textheight}}
  }{\textheight}%
}{2.4ex}}%
\stackon[-6.9pt]{#1}{\tmpbox}%
}

\usepackage{seans-shortcuts}

\setlength{\parindent}{0em}
\setlength{\parskip}{0.6em}%


\title[Guillarmou's Normal Operator]{Guillarmou's Normal Operator\\ for Magnetic and Thermostat Flows}
\author{Sebastián Muñoz-Thon}
\address{Universit\'e Paris-Saclay, Laboratoire de math\'ematiques d’Orsay, 91405, Orsay, France.}
\email{sebastian.munoz-thon@universite-paris-saclay.fr}

\author{Sean Richardson}
\address{Department of Mathematics, University of Washington, Seattle, WA 98195-4350.}
\email{seanhr@uw.edu}

\begin{document}

\begin{abstract}
Guillarmou's normal operator over a closed Anosov manifold is analogous to the classical normal operator of the geodesic X-ray transform over manifolds with boundary. In this paper, we generalize this normal operator, under some dynamical assumptions, to thermostat flows as well as to the case of the magnetic flows. In particular, we show that these generalized normal operators are elliptic pseudodifferential operators of order $-1$ in each case. As an application, we prove a stability estimate for the magnetic X-ray transform.
\end{abstract}

\maketitle

\section{Introduction}

Given a volume-preserving Anosov flow $\varphi_t \colon \mc{M} \to \mc{M}$ over a smooth closed manifold $\mc{M}$ with infinitesimal generator $X$, Guillarmou showed the existence of a self-adjoint bounded operator $\Pi \colon C^{\infty}(\mc{M}) \to \mathcal{D}'(\mc{M})$ from smooth functions to distributions \cite{Guillarmou17}. One of the main properties of this operator is that it generates distributions invariant under the flow, which has been an important topic of recent research in the case of geodesic flows $\varphi_t \colon SM \to SM$ over the unit tangent bundle $\pi_0 \colon SM \to M$ of a closed, oriented Riemannian manifold $M$ \cites{PSU14, 2015-paternain-salo-uhlmann-invariant}. In the case of geodesic flows, Guillarmou defined the \emph{normal operator} $\Pi_0 = \pi_{0*} \Pi \pi_0^*$ and proved that $\Pi_0$ is an elliptic pseudodifferential operator of order $-1$. This has applications to the X-ray transform $I_0 \defeq I \circ \pi_0^*$ where for each closed geodesic $\gamma$, we define $I$ to act on a smooth function $f \in C^{\infty}(SM)$ by
$$
    If(\gamma) = \int_0^{\operatorname{Length}(\gamma)} f(\varphi_t(x,v)) dt,
$$
and $(x,v)$ is a position-velocity pair of $\gamma$ at some time. By identifying a symmetric covariant $m$-tensor $h$ with a function on the unit tangent bundle by $\pi_m^* h (x,v) = h_x(v, \cdots, v)$, this X-ray transform generalizes to such $m$-tensors by $I_m \defeq I \circ \pi_m^*$. With applications to this X-ray transform $I_m$, Guillarmou generalized the previously introduced normal operator to symmetric covariant $m$-tensors by considering $\Pi_m = \pi_{m*} \Pi \pi_m^*$, and proved that $\Pi_m$ is an elliptic pseudodifferential operator of order $-1$. The operator $\Pi_m$ plays an analogous role to the normal operator for the X-ray transform on manifolds with boundary as studied in \cites{2004-stefanov-uhlmann, SSU 05}. Of particular importance is the case $m = 2$, for $I_2$ corresponds to the linearization of the marked length spectrum, which encodes the length of all closed orbits. Indeed, by using the normal operator $\Pi_2$, Guillarmou and Lefeuvre proved the marked length spectrum of an Anosov manifold locally determines the metric \cite{2019-guillarmou-lefeuvre} which was refined in \cite{GKL22}, making progress towards the Burns--Katok conjecture \cite{1985-burns-katok}.
    
In this paper, we generalize the theory of Guillarmou's normal operator from geodesic flows to the more general magnetic and thermostat flows. For a geodesic flow, a trajectory $\gamma(t)$ by definition does not accelerate with respect to the covariant derivative $D_t$ induced by the Levi-Civita connection: $D_t \dot{\gamma} = 0$. A magnetic flow $\varphi_t \colon SM \to SM$ has trajectories $\gamma(t)$ that model charged particles accelerated by a magnetic field, governed by $D_t \dot{\gamma} = Y(\gamma(t), \dot{\gamma}(t))$ where $Y \colon TM \to TM$ must be of the form $\ip{Y(x,v)}{w}_g = \Omega(v,w)$ for a closed 2-form $\Omega$ that encodes the magnetic field.

A thermostat flow $\varphi_t \colon SM \to SM$ generalizes this class of flows further and has trajectories $\gamma(t)$ governed by $D_t \dot{\gamma} = Y(\gamma(t), \dot{\gamma}(t))$ for any bundle map $Y \colon SM \to N$ taking values in the normal bundle $N(x,v) = \{w \in T_xM : \ip{v}{w}_g = 0\}$. We will give more preliminary details on thermostat and magnetic flows in Section~\ref{sec:thermostats}.

In Section~\ref{sec:thermo-normal}, we define an analogous normal operator $\Pi_0$ for the case of an Anosov thermostat flow. Then under certain dynamical assumptions (discussed in detail in Section \ref{sec:prelim}), we show this normal operator $\Pi_0$ is an elliptic pseudodifferential operator for thermostat flows.

\begin{thm}
	Consider a topologically transitive Anosov thermostat flow and corresponding normal operator $\Pi_0$. If the flow has no conjugate points and the stable and unstable bundles are each transverse to the vertical, then $\Pi_0$ is an elliptic pseudodifferential operator of order $-1$.
    \label{thm:main-thermostat-thm}
\end{thm}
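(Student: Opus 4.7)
The plan is to mirror Guillarmou's construction for geodesic flows, making careful use of the dynamical hypotheses to justify each step in the thermostat setting.

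First, I would construct the operator $\Pi$ on $\mathcal{M} = SM$. Since the thermostat flow is Anosov and topologically transitive, the resolvents $R_{\pm}(\lambda) = (\mp X - \lambda)^{-1}$ admit meromorphic extensions from $\{\real\lambda \gg 0\}$ to all of $\mathbb{C}$ on anisotropic Sobolev spaces of Faure--Sj\"ostrand / Dyatlov--Zworski type. Topological transitivity implies $\ker X|_{C^{\infty}(\mathcal{M})} = \mathbb{C}$, with a parallel statement for $X^*$, so one extracts the holomorphic parts $\Pi_\pm$ of $R_\pm$ at $\lambda = 0$ on the orthogonal complement of the rank-one resonance and sets $\Pi = \Pi_+ + \Pi_-$. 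This construction does not require volume preservation, so it carries over from the geodesic to the thermostat case. Propagation of singularities for $\pm X$ then yields
\[
    \wf'(\Pi) \subset N^*\Delta \cup \Lambda_+ \cup \Lambda_-,
\]
where $\Lambda_\pm$ denotes the forward/backward flowout of $N^*\Delta \setminus 0$ under the Hamiltonian lift of $X$, with the appropriate leg contained in the dual stable/unstable bundle $E_\pm^*$.

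The technical heart of the argument is showing that $\Pi_0 = \pi_{0*} \Pi \pi_0^*$ is a pseudodifferential operator of order $-1$ on $M$. Away from the diagonal of $M \times M$, the transversality of the stable and unstable bundles to the vertical ensures that $\Lambda_\pm$ intersects the conormal to the fibers of $\pi_0$ cleanly, so the pushforward--pullback calculus for Lagrangian distributions yields a smooth kernel. Near the diagonal, the no conjugate points hypothesis supplies a local diffeomorphism---a ``thermostat exponential map''---from a neighborhood of the zero section in $TM$ onto a neighborhood of the diagonal in $M \times M$. In these adapted coordinates the Schwartz kernel of $\Pi_0$ becomes an oscillatory integral in standard form, exhibiting $\Pi_0$ as a classical pseudodifferential operator. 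Stationary phase along the vertical fiber then computes the principal symbol as a positive multiple of $|\xi|^{-1}$ weighted by a nonvanishing integral over the unit sphere, establishing ellipticity.

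The main obstacle will be the normal form reduction near the diagonal. Guillarmou's original argument relies on geodesic Jacobi fields and the symplectic structure of the geodesic flow, neither of which is directly available for thermostats: general thermostat flows are not Hamiltonian, nor need they preserve a volume form. These tools must be replaced by a direct study of the linearized thermostat flow and its Jacobi equation, with the no conjugate points hypothesis entering exactly to guarantee the non-degeneracy of the thermostat exponential map, and hence of the phase function in the oscillatory integral representation of $\Pi_0$.
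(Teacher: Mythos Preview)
Your overall strategy matches the paper's: construct $\Pi$ from the holomorphic parts of the resolvents, split into a short-time piece that becomes a $\Psi$DO of order $-1$ via a thermostat exponential change of variables, and argue the remainder is smoothing by wavefront set analysis. However, you have the roles of the two dynamical hypotheses essentially reversed, and this matters.

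First, the no conjugate points assumption is \emph{not} what provides the local diffeomorphism near the diagonal. For any thermostat flow the map $(t,v) \mapsto \gamma_{x,v}(t)$ satisfies $\dot\gamma_{x,v}(0)=v$, so after the rescaling $\gamma_{x,v}(t)-x = t\,m(t,v;x)$ with $m(0,v;x)=v$ one checks by a direct computation that the Jacobian of the change of variables $(t,v)\mapsto(r,\omega)$ equals $1$ at $t=0$. This is an inverse function theorem statement for small $\eps$ and uses nothing about conjugate points; it already gives the oscillatory integral form of the short-time kernel and the ellipticity of the principal symbol (via \cite{DPSU07}, Lemma~B.1).

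Second, your wavefront bound $\wf'(\Pi)\subset N^*\Delta\cup\Lambda_+\cup\Lambda_-$ is incomplete: the Dyatlov--Zworski estimate for $R_\pm^{\hol}$ also contains the ``corner'' pieces $E_u^*\times E_s^*$ and $E_s^*\times E_u^*$. It is precisely these pieces that the transversality hypothesis kills: $E_s\cap\V=\{0\}$ forces $(E_s\oplus\FF)+\V=TSM$, hence $E_s^*\cap(\H^*\oplus\FF^*)=\{0\}$, so after sandwiching by $\pi_0^*$ (wavefront in $\H^*\oplus\FF^*$) and $\pi_{0*}$ (sees only $\H^*\oplus\FF^*$) this contribution vanishes. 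The finite-time flowout $\Omega_\pm$, on the other hand, is \emph{not} killed by transversality but by the absence of conjugate points: one needs that for $t\neq 0$ the Hamiltonian lift satisfies $\Phi_t(\H^*)\cap(\H^*\oplus\FF^*)=\{0\}$, which is exactly the dual formulation of no conjugate points used in the paper. So both hypotheses enter in the smoothing argument for the long-time remainder, not in the near-diagonal $\Psi$DO computation, and each handles a different piece of the wavefront set.

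Concretely, the paper implements the split via a bump function: writing
\[
R_{\pm}^{\hol}(0)=\int_0^\infty \chi(t)\varphi_{\mp t}^*\,dt + R_{\pm}^{\hol}(0)\int_0^\infty \chi'(t)\varphi_{\mp t}^*\,dt + \text{(smoothing)},
\]
the first term gives the $\Psi$DO after $\pi_{0*}(\cdot)\pi_0^*$, and the second is shown smoothing by the wavefront chase above. Your proposal would benefit from making this decomposition explicit rather than speaking of ``near/away from the diagonal'' abstractly.
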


This result has applications to the ray transform of an Anosov thermostat flow $\varphi_t: SM \to SM$ over functions given by $I_0 := I \circ \pi_0^*$ for
\[
    (I f)(\gamma) = \int_0^{T} f(\varphi_t(x,v)) dt
\]
where $\gamma$ is a closed thermostat orbit of period $T$ with position-velocity pair $(x,v)$ at some time. The injectivity of such ray transforms has been studied over closed surfaces \cite{DP07, JP09, AZ17}, but little is known for the higher dimensional case. By the same argument giving \cite[Theorem 16.2.6]{Lefeuvre}, Theorem~\ref{thm:main-thermostat-thm} immediately implies the kernel of the thermostat ray transform is finite dimensional for an Anosov thermostat flow over a manifold of any dimension.

Furthermore, we may relax the assumptions of Theorem~\ref{thm:main-thermostat-thm} for magnetic flows and thermostat flows over a surface. Indeed, it is known an Anosov flow will be free of conjugate points and that the stable and unstable bundles will be transverse to the vertical bundle for a magnetic flow \cite{1994-paternain-paternain} and in the case of a thermostat flow over a surface by \cite[Theorem 1.5]{ECMR25} and \cite[Lemma 4.1]{DP07}. Furthermore, we may remove the transitive assumption: magnetic flows are volume-preserving, so the Anosov property implies ergodicity (for $M$ connected), giving topological transitivity. For thermostats, Ghys proved any Anosov flow over the unit tangent bundle of a surface is topologically orbit equivalent to a geodesic flow over a surface of constant negative curvature \cite[Theorem A]{1984-ghys}, implying topological transitivity. Thus Theorem~\ref{thm:main-thermostat-thm} has the following immediate consequence.
\begin{cor}
    If $\varphi_t \colon SM \to SM$ is an Anosov flow over a closed connected manifold that is either
    \begin{enumerate}[(a), nosep]
        \item a magnetic flow,
        \item a thermostat flow over a surface,
    \end{enumerate}
    then the corresponding normal operator $\Pi_0$ is an elliptic pseudodifferential operator of order $-1$.
\end{cor}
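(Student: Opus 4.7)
The plan is to reduce each case to a direct invocation of Theorem~\ref{thm:main-thermostat-thm} by verifying its three hypotheses in turn: (i) topological transitivity, (ii) the absence of conjugate points, and (iii) the transversality of the stable and unstable bundles to the vertical. In both settings the Anosov condition is part of the assumption, so only these three properties require work. The first observation is that magnetic flows fit into the thermostat framework: the bundle map $Y$ associated to a closed $2$-form $\Omega$ satisfies $\ip{Y(x,v)}{v}_g = \Omega(v,v) = 0$, so $Y$ takes values in the normal bundle $N$, and thus the magnetic case can indeed be treated as a special instance of Theorem~\ref{thm:main-thermostat-thm}.

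For case (a), the topological transitivity will be obtained from volume preservation. Magnetic flows preserve the Liouville measure on $SM$, and an Anosov flow preserving a smooth measure is ergodic by the standard Hopf/Anosov argument; on a connected manifold, ergodicity of a continuous flow forces topological transitivity. The remaining geometric conditions — no conjugate points and the transversality of the hyperbolic splitting to the vertical subbundle — are precisely the content of \cite{1994-paternain-paternain}, as the authors note in the paragraph preceding the corollary.

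For case (b), topological transitivity follows from Ghys' classification \cite[Theorem A]{1984-ghys}: any Anosov flow on the unit tangent bundle of a closed surface is topologically orbit equivalent to the geodesic flow of a hyperbolic surface, and topological transitivity is invariant under orbit equivalence. The no-conjugate-points hypothesis is supplied by \cite[Theorem 1.5]{ECMR25}, and the transversality of the stable and unstable bundles to the vertical is \cite[Lemma 4.1]{DP07}. Once these ingredients are collected, Theorem~\ref{thm:main-thermostat-thm} applies verbatim and yields the claim.

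I expect no substantial obstacle: essentially all analytic content is already packaged inside Theorem~\ref{thm:main-thermostat-thm}, and the corollary is a verification that the hypotheses hold in two geometrically natural situations. The only point requiring minor care is the implication \emph{volume-preserving Anosov $\Rightarrow$ topologically transitive} in (a), which one must articulate via ergodicity rather than assume as standard, and the observation that a magnetic $Y$ genuinely satisfies the normal-bundle constraint of a thermostat. Both remarks are short, so the proof will consist of little more than assembling the cited facts and invoking Theorem~\ref{thm:main-thermostat-thm}.
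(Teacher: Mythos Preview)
Your proposal is correct and matches the paper's approach exactly: the paper derives the corollary by observing, in the paragraph immediately preceding it, that in each case the Anosov assumption forces topological transitivity (via ergodicity for magnetic flows, via Ghys' orbit equivalence for surface thermostats) and that the no-conjugate-points and transversality hypotheses are supplied by \cite{1994-paternain-paternain} in the magnetic case and by \cite[Theorem 1.5]{ECMR25} together with \cite[Lemma 4.1]{DP07} in the surface thermostat case, after which Theorem~\ref{thm:main-thermostat-thm} applies directly.
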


In Section~\ref{sec:magnetic_normal} we generalize Guillarmou's normal operator in a different direction: to correspond with the magnetic ray transform over higher order tensors. In the case of the geodesic flow, the X-ray transform appears naturally as the linearization of the marked length spectrum. In the magnetic case, we have a similar phenomenom. Indeed, if $\Omega=d\alpha$ (i.e., for \emph{exact magnetic systems}), we define the \emph{magnetic action} over the curve $\gamma \colon [0,T] \to M$ by
\begin{equation} \label{eq:action}
    \mathbb{A}(\gamma)=\frac{1}{2}\int_{0}^{T}|\dot{\gamma}(t)|_{g}^{2}dt+\frac{1}{2}T-\int_{\gamma}\alpha.
\end{equation}
It was shown in \cite{CIPP00}*{Theorem 29}, that on every non-trivial free homotopy class, there is a minimizer of $\mathbb{A}$, i.e., a magnetic geodesic. In the case of Riemannian manifolds with boundary, the magnetic action was studied in \cite{DPSU07}. As in \cite{DPSU07}*{Lemma 3.1}, it can be shown that the linearization of $\A$ in the direction $[h,\beta] \in C^{\infty}(S^{2}(T^{*}M)) \times C^{\infty}(S^{1}(T^{*}M))$ is given by
\[ \int_{\gamma} \langle h,\gamma \otimes \gamma \rangle +\int_{\gamma} \langle \beta, \gamma\rangle .   \]
This motivates the definition of the magnetic ray transform by $I_m[p,q] \defeq I(\pi_m^* p + \pi^*_{m-1}q)$ for $[p,q] \in C^{\infty}(S^m(T^*M)) \times C^{\infty}(S^{m-1}(T^*M))$. In similar spirit to the normal operator considered in \cite{DPSU07} for manifolds with boundary, we define the \emph{magnetic normal operator}
\begin{equation} \label{eq:magnetic_normal}
    N_{m}=\begin{pmatrix}
    \pi_{m *} (\Pi+1\otimes 1) \pi_{m}^{*} & \pi_{m *} (\Pi+1\otimes 1) \pi_{m-1}^{*} \\ \pi_{m-1 *} (\Pi+1\otimes 1) \pi_{m}^{*}  & \pi_{m-1 *} (\Pi+1\otimes 1) \pi_{m-1}^{*}
\end{pmatrix},\
\end{equation}
which plays the role of the normal operator to $I_{m}$. This operator further generalizes Guillarmou's original normal operator for the geodesic flow and enjoys many of the same properties (which we explore in Section \ref{sec:magnetic_normal}), including its pseudodifferential nature:

\begin{thm} \label{thm:normal_magnetic_psido}
$N_{m} \in \Psi_{\mathrm{cl}}^{-1}(M)$ with principal symbol
\[
\begin{split}
    \sigma_{N_{m}}(x,\xi)=\frac{2\pi}{|\xi|} \mathrm{diag} \bigg[  &C_{n,m}^{-1}\pi_{\ker_{i_{\xi}^{m}}}\pi_{m *}\pi_{m}^{*}\pi_{\ker_{i_{\xi}^{m}}}, \\
    & C_{n,m-1}^{-1}\pi_{\ker_{i_{\xi}^{m-1}}}\pi_{m-1 *}\pi_{m-1}^{*}\pi_{\ker_{i_{\xi}^{m-1}}} \bigg].
\end{split}
\]
\end{thm}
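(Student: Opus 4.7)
The plan is to establish each entry of the matrix $N_m$ is a classical pseudodifferential operator of order $-1$ and compute its principal symbol. First I would observe that $1 \otimes 1$ denotes the rank-one operator $u \mapsto \left(\int_{SM} u\right)\cdot 1$ with smooth Schwartz kernel, hence smoothing; its contribution to each entry is smoothing and does not affect the principal symbol. It therefore suffices to analyze $\pi_{k*}\,\Pi\,\pi_l^*$ for $k, l \in \{m-1, m\}$.

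For the diagonal entries $\pi_{m*}\,\Pi\,\pi_m^*$ and $\pi_{(m-1)*}\,\Pi\,\pi_{m-1}^*$, I would adapt Guillarmou's analysis of the geodesic case. One decomposes $\Pi$ as $R_+ + R_- + S$ where $R_\pm$ are the resolvents of $\mp X$ at $0$ (restricted to the appropriate anisotropic spaces) and $S$ is smoothing. The Schwartz kernel of $\pi_{k*}\,R_\pm\,\pi_k^*$ near the diagonal of $M \times M$ is an oscillatory integral along the magnetic flow, and applying stationary phase on the fiber yields a classical symbol of order $-1$. The principal symbol depends only on the first-order Taylor expansion of the flow at $t=0$, which agrees with that of the geodesic flow since the magnetic acceleration $Y$ enters only at order $t^2$. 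Hence the resulting formula matches Guillarmou's: $\frac{2\pi}{|\xi|}C_{n,k}^{-1}\pi_{\ker i_\xi^k}\pi_{k*}\pi_k^*\pi_{\ker i_\xi^k}$.

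For the off-diagonal entries $\pi_{m*}\,\Pi\,\pi_{m-1}^*$ and its formal transpose, I expect the principal symbol to vanish by a parity argument. Pullbacks $\pi_k^*$ produce functions on $SM$ of definite parity $(-1)^k$ under the fiber flip $(x,v) \mapsto (x,-v)$. By the first-order matching noted above, the leading-order term of the Schwartz kernel of $\Pi$ on $SM \times SM$ near the diagonal is invariant under the simultaneous flip, so the principal symbol of $\pi_{m*}\,\Pi\,\pi_{m-1}^*$ reduces to an integration over a fiber sphere of a polynomial of odd total degree $2m-1$, which vanishes. This yields the stated diagonal form of the principal symbol matrix.

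The main technical obstacle is rigorously verifying that the pseudodifferential composition calculus applies for each entry---in particular, that the off-diagonal entries lie in $\Psi^{-1}_{\mathrm{cl}}(M)$ before symbol cancellation and not merely as operators of lower order. This requires careful control of the wavefront set of $\Pi$ and transversality of the magnetic flow relative to the natural foliations on $SM$ induced by $\pi_k$. The hypotheses satisfied by magnetic flows---the Anosov property, absence of conjugate points, and transversality of the stable/unstable bundles to the vertical---are precisely what makes this verification parallel to the geodesic case.
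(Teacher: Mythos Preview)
Your proposal is correct and follows essentially the same route as the paper: reduce, via the wavefront-set analysis you allude to in your last paragraph (the paper's Lemmas~3.1 and~3.5), to the short-time operator $\pi_{k*}\int_{-\eps}^{\eps} e^{tF}\,dt\,\pi_l^*$, then compute its principal symbol in local coordinates (the paper uses the explicit change of variables $(t,v)\mapsto(r,\omega)$ from \cite{DPSU07}*{Lemma~B.1} rather than an abstract stationary-phase argument), and dispose of the off-diagonal entries by exactly your parity observation. The final identification of the diagonal symbol with the projected form involving $C_{n,k}^{-1}$ and $\pi_{\ker i_\xi^k}$ is then quoted from the geodesic computation in \cite{Lefeuvre}*{Theorem~16.2.1}, as you suggest.
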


The expression associated with the symbol follows from a potential-solenoidal decomposition associated to a magnetic differential operator that we study in Section \ref{sec:mag_ps}. The super-indices over $i_{\xi}$ denote the order of tensors on which it acts. Here, $\pi_{\ker_{i_{\xi}}}$ is the projection onto $\ker i_{\xi}$, where $i_{\xi}$ is the contraction operator. Similarly as in the case of the geodesic flow, we have $C_{n,m}=\sqrt{\pi} \frac{\Gamma(\frac{n-1}{2}+m)}{\Gamma(\frac{n}{2}+m)}$.

As a consequence of the pseudodifferential nature and Liv\v sic theory developed by Gou\"ezel and Lefeuvre, we obtain a stability estimate for $I_{m}$.

\begin{thm} \label{thm:stability_xray}
    Assume that $I_{m}$ is s-injective. There exists $s_0 \in(0,1)$ and $C, \tau>0$ such that for all $f \in C^{1}(M, S^{m}(T^{*}M) \times S^{m-1}(T^{*}M))$:
    \[ \|\pi_{\ker D_{\mu}^{*}} f \|_{H^{s_0}} \leq C \|I_m f \|_{\ell^{\infty}(\mathcal{C})}^{\tau}\|f\|_{C^1}^{1-\tau}.\]
\end{thm}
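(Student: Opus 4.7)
The strategy is to combine the pseudodifferential ellipticity of $N_m$ from Theorem~\ref{thm:normal_magnetic_psido} with the Hölder Livšic-type estimate of Gouëzel and Lefeuvre for Anosov flows.

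First, I would derive a Sobolev stability estimate for $N_m$ on the solenoidal subspace. The principal symbol displayed in Theorem~\ref{thm:normal_magnetic_psido} is block-diagonal with entries sandwiched between the projections $\pi_{\ker i_{\xi}^{m}}$ and $\pi_{\ker i_{\xi}^{m-1}}$, which are precisely the principal symbols of the solenoidal projection $\pi_{\ker D_\mu^*}$. Hence $N_m$ is elliptic on $\ker D_\mu^*$. Combining ellipticity with the s-injectivity of $I_m$---the latter forcing injectivity of $N_m|_{\ker D_\mu^*}$ through the pairing $\langle N_m f, f\rangle_{L^2}$, which should reduce to a positive multiple of $\|I_m f\|^2$ modulo the smoothing correction coming from $1\otimes 1$---yields invertibility of $N_m|_{\ker D_\mu^*}$ and the stability estimate
\[
    \|\pi_{\ker D_\mu^*} f\|_{H^{s_0}} \leq C \|N_m f\|_{H^{s_0+1}}.
\]

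Second, I would invoke the Gouëzel--Lefeuvre Hölder Livšic estimate, which applies to any smooth Anosov flow on which Guillarmou's operator $\Pi$ is defined: for some $s_0 \in (0,1)$, $\tau \in (0,1)$, and $C > 0$,
\[
    \|\Pi u\|_{H^{s_0+1}} \leq C \|Iu\|_{\ell^{\infty}(\mathcal{C})}^{\tau} \|u\|_{C^1}^{1-\tau}
\]
for all $u \in C^1(SM)$. Setting $u = \pi_m^* f_1 + \pi_{m-1}^* f_2$ with $f = (f_1, f_2)$ gives $Iu = I_m f$ and $\|u\|_{C^1} \lesssim \|f\|_{C^1}$. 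From the explicit form of $N_m$ in \eqref{eq:magnetic_normal}, the continuity of the push-forwards $\pi_{m*}$ and $\pi_{m-1*}$, and the smoothing nature of $1\otimes 1$, one obtains
\[
    \|N_m f\|_{H^{s_0+1}} \lesssim \|\Pi u\|_{H^{s_0+1}} + \|f\|_{L^2}.
\]
Chaining these three estimates, and absorbing the $L^2$-residual via a standard compactness--uniqueness argument based on s-injectivity and interpolation between $C^1$ and $C^0$, yields the claimed stability estimate.

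The main obstacle is transferring the Gouëzel--Lefeuvre Hölder Livšic estimate from the geodesic to the magnetic setting. While their proof invokes only the Anosov property together with general features of Guillarmou's $\Pi$, one must verify that each step, in particular the dynamical closing and mixing arguments that produce the Hölder exponent $\tau$, goes through unchanged in the magnetic case. A secondary concern is the bookkeeping for the $1\otimes 1$ correction, which needs to be confirmed to act as a genuinely smoothing perturbation so that its contribution does not spoil either the ellipticity-plus-injectivity argument of Step~1 or the $L^2$ control in Step~2.
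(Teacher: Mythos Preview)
Your overall architecture matches the paper's---a Sobolev stability estimate for $N_m$ on solenoidal tensors combined with the Gou\"ezel--Lefeuvre approximate Liv\v sic theorem---but Step~2 contains a genuine gap. The inequality you invoke,
\[
\|\Pi u\|_{H^{s_0+1}} \leq C\|Iu\|_{\ell^\infty}^\tau \|u\|_{C^1}^{1-\tau},
\]
is not the Gou\"ezel--Lefeuvre result and is in fact false: for $u\in C^\infty(SM)$ the distribution $\Pi u$ has wavefront set in $E_s^*\cup E_u^*$ and lies only in anisotropic spaces $\mathcal{H}_\pm^s$, not in any isotropic $H^{s}(SM)$ with $s>0$. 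The actual Gou\"ezel--Lefeuvre statement is the decomposition $u=Fw+h$ with $\|h\|_{C^\beta}\leq C\|Iu\|_{\ell^\infty}^\tau\|u\|_{C^1}^{1-\tau}$ (Lemma~\ref{lemma:approx}). The paper uses this directly: apply $\tau_m(\Pi+1\otimes 1)$ to $u=Fw+h$; the $Fw$ piece is annihilated because $\Pi F=0$ and $\int_{SM}Fw=0$, leaving $\tau_m(\Pi+1\otimes 1)h$. The crucial nontrivial input you are missing is Proposition~\ref{prop:bound_tau}, which shows $\tau_m\Pi:H^s(SM)\to H^s$ is bounded for $s>0$; this is a wavefront-set argument exploiting that $\pi_{m*}$ only sees singularities in $\mathbb{F}^*\oplus\mathbb{H}^*$, transverse to $E_s^*\cup E_u^*$. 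Mere ``continuity of the push-forwards'' is not enough.

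This route also removes the need for your compactness--uniqueness step: the $1\otimes 1$ term is killed on $Fw$ and bounded by $\|h\|_{C^\beta}$ on the remainder, so no $\|f\|_{L^2}$ residual ever appears. (Your proposed absorption argument is in any case unclear for a nonlinear estimate of the form $\|\cdot\|\lesssim\|I_m f\|^\tau\|f\|_{C^1}^{1-\tau}$, since one has no a~priori control on $\|f_n\|_{C^1}$ along a contradicting sequence.) Finally, your worry about transferring Gou\"ezel--Lefeuvre to the magnetic setting is unfounded: Lemma~\ref{lemma:approx} is stated and proved for arbitrary transitive Anosov flows.
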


Here s-injectivity means that the kernel of $I_{m}$ is as big as possible: it is exactly the range of a differential operator $D_{\mu}$ which is closely related with the generator of the magnetic flow $F$. Indeed, we have that $F[\pi_{m}^{*},\pi_{m-1}^{*}]=[\pi_{m+1}^{*},\pi_{m}]D_{\mu}$ (this is the content of Lemma \ref{lemma:commutating}). Theorem \ref{thm:stability_xray} is proved in Section \ref{sec:inj_stab_xray}. There, we also adapt the proof of injectivity for $I_{m}$ given in \cite{DPSU07} for manifolds with boundary, under the assumptions that $g$ is negatively curved and the magnetic field is small. This shows in particular that Theorem \ref{thm:stability_xray} is a non-void result.

Finally, let us comment that, besides the motivation of generalizing the $\Pi_{m}$ operator, the ``more natural'' motivation (and also for what the original $\Pi_{m}$ was created) was to study rigidity questions. As we just mentioned above, $N_{m}$ is related with $I_{m}$ which is the linearization of the magnetic action. Following the Burns--Katok conjecture, one could ask if the marked magnetic action determines the magnetic system $(g,\alpha)$ up to a proper group of transformations (or \emph{gauge}). In this case, besides the usual diffeomorphism, there is a exact one form on the magnetic part. More explicitly, if $\phi$ is a diffeomorphism homotopic to the identity and $\psi$ and smooth function, then the marked magnetic action spectrum (i.e., the set of magnetic actions of magnetic geodesics on each free homotopy class) of $\A_{g,\alpha}$ and $\A_{\phi^{*}g,\phi^{*}\alpha+d\psi}$ coincide. A similar problem was studied on manifolds with boundary in \cite{DPSU07}. On closed surfaces, another generalization of the marked length spectrum was recently studied in \cite{AdSMRT24}. They obtained rigidity results by using the (Riemannian) length of magnetic geodesics.

\subsection*{Acknowledgements} The authors thank T. Lefeuvre and G. Paternain for useful discussions, and J. Echevarría Cuesta and J. Marshall Reber for discussions over conjugate points on surfaces. This paper was partially written while the second author was visiting Stanford University and University of Washington, he gratefully acknowledges the hospitality and support of both institutions during his stay. S.M-T. was supported by the European Research Council (ERC) under the European Union’s Horizon 2020 research and innovation programme (Grant agreement no. 101162990 -- ADG) and S.R. was supported by the National Science Foundation Graduate Research Fellowship under Grant No. DGE-2140004.

\section{Preliminaries} \label{sec:prelim}

\subsection{Unit tangent bundle and geodesic flow}
\label{sec:SM}

For any Riemannian manifold $(M,g)$, the \emph{unit tangent bundle} is defined $SM = \{(x,v) \in TM : |v|_g = 1\}$ and comes with natural projection $\pi_0 \colon SM \to M$. The unit tangent bundle is the phase space for several relevant flows that we introduce in this section. As a first example, consider the \emph{geodesic flow} $\varphi_t \colon SM \to SM$ given by $\varphi_t(x,v) = (\gamma_{x,v}(t), \dot{\gamma}_{x,v}(t))$ where $\gamma_{x,v}(t)$ is the geodesic with initial position $\gamma_{x,v}(0) = x$ and initial velocity $\dot{\gamma}_{x,v}(0) = v$. We denote the  infinitesimal generator of the geodesic flow by $X$, a smooth vector field on $SM$.

The unit tangent bundle $SM$ inherits natural geometric structures from the base manifold $M$, which we now recall. Smooth curves $Z \colon (-\eps, \eps) \to SM$ over the unit tangent bundle take the form $Z(t) = (\alpha(t), W(t))$ for a smooth curve $\alpha \colon (-\eps, \eps) \to M$ over the base manifold together with a smooth vector field $W(t)$ along $\alpha(t)$. Thus we may think of a tangent vector $\xi \in T_{(x,v)}SM$ as the velocity vector of such a curve $\xi = \derev{}{t}{t=0}(\alpha(t), W(t))$ so that $\alpha(0) = x$ and $W(0) = v$. This expression together with the Levi-Civita connection $\nabla$ and corresponding covariant derivative $D_t$ along $\alpha(t)$ allows us to naturally identify $\xi$ with $\left(\alpha'(0), \left.D_t W(t)\right|_{t=0}\right)$, which we can write $(d\pi_0 \xi, \mathtt{K} \xi)$ using the \emph{connection map} $\mathtt{K}\xi \defeq \left.D_tW\right|_{t=0}$. To formalize this splitting, consider subbundles $\V \defeq \ker d\pi_0$ and $\widetilde{\mathbb{H}} \defeq \ker \mathtt{K}$, then $d\pi_0 \colon \widetilde{\mathbb{H}}(x,v) \to T_xM$ and $\mathtt{K} \colon \mathbb{V}(x,v) \to \{v\}^{\perp}$ are linear isomorphisms, which allows us to write $TSM = \widetilde{\mathbb{H}} \oplus \V$. To represent an element $\xi \in T_{(x,v)}SM$ under this splitting, as before, we will write $\xi = (d\pi_0(\xi), \mathtt{K} \xi)$. Declaring this splitting to be orthogonal and using the metric induced by $g$ on each subbundle gives the \emph{Sasaki metric} $G$ on $SM$ defined by
\begin{equation*}
    \ip{\xi}{\eta}_G \defeq \ip{d\pi (\xi)}{d\pi (\eta)}_g + \ip{\mathtt{K}\xi}{\mathtt{K}\eta}_{g}
\end{equation*}
for $\xi, \eta \in T_{(x,v)}SM$. The Sasaki metric induces the \emph{Liouville volume form} $d\Sigma$ as well as volume form $dS_x$ on $S_xM$. We use the Sasaki metric to orthogonally decompose $\widetilde{\H} = \R X \oplus \H$ further. In summary, we arrive at the splitting
$$TSM = \X \oplus \H \oplus \V,$$
into the \emph{flow direction} $\X \defeq \R X$, the \emph{horizontal subbundle} $\H$, and the \emph{vertical subbundle} $\V$. See \cites{Paternain, PSU23} for more details.


\subsection{Magnetic and thermostat flows}
\label{sec:thermostats}
Let $(M,g)$ be a closed Riemannian manifold with Levi-Civita connection $\nabla$. A magnetic field over $M$ may be encoded by a closed 2-form $\Omega$ over $M$, or by the resulting map $Y \colon TM \to TM$ given by $\ip{Y(x,v)}{w}_g = \Omega(v,w)$, which governs the acceleration of a particle with unit mass and charge by the \emph{Lorentz--Newton law}
\begin{equation}
    D_t\dot{\gamma} = Y(\gamma(t),\dot{\gamma}(t)).
    \label{eq:lorentz}
\end{equation}
Here $D_t$ denotes the covariant derivative along $\gamma(t)$ induced by the Levi-Civita connection. Let $\gamma_{x,v}(t)$ denote the unique solution to \eqref{eq:lorentz} with initial conditions $\gamma(0) = x$ and $\dot{\gamma}(0) = v$. These dynamics induce the \emph{magnetic flow} $\varphi_t \colon SM \to SM$ on the unit tangent bundle by $\varphi_t(x,v) = (\gamma_{x,v}(t), \dot{\gamma}_{x,v}(t)$) with infinitesimal generator $F(x,v) = \derev{}{t}{t=0}(\gamma_{x,v}(t), \dot{\gamma}_{x,v}(t))$. Under the splitting $TSM = \widetilde{\H} \oplus \V$ described in Section~\ref{sec:SM}, we may identify $F(x,v)$ with $(\dot{\gamma}_{x,v}(0), D_t\dot{\gamma}_{x,v}(0))$, which is simply $(v, Y(x,v))$ by the Lorentz force law \eqref{eq:lorentz}.
We will often use this splitting implicitly, for example by writing $F(x,v) = (v, Y(x,v))$.

In addition to magnetic flows, we will work with the more general \emph{thermostat flows} $\varphi_t\colon SM \to SM$ given by infinitesimal generator $F(x,v) = (v, Y(x,v))$ under the splitting $TSM = \widetilde{\H} \oplus \V$ where the force $Y \colon SM \to N$ is a general bundle map taking values in the normal bundle $N(x,v) = \{w \in T_xM : \ip{v}{w}_g = 0\}$.

Note $F$ has nonzero component in the direction $X$ of the geodesic flow, so we may write the splitting $TSM = \FF \oplus \H \oplus \V$ where we are denoting $\FF \defeq \R F$.

\subsection{Anosov Flows}
\label{sec:anosov}
The theory in defining Guillarmou's operator $\Pi$ requires the dynamics of the thermostat flow to be sufficiently chaotic and, in particular, we require the Anosov property. Recall a flow $\varphi_t \colon \mc{M} \to \mc{M}$ over a closed smooth manifold $\mc{M}$ with infinitesimal generator $F$ is called \emph{Anosov} if there is a continuous and flow-invariant splitting of the tangent space
\begin{equation*}
	T\mc{M} = \FF \oplus E_s \oplus E_u,
\end{equation*}
into the flow direction $\FF := \R F$, the \emph{stable bundle}, and the \emph{unstable bundle} respectively such that given an arbitrary metric $|\bullet|$ on $\mc{M}$ (recall that $\mc{M}$ is closed) there exists constants $C, \lambda > 0$ so that for all $t \geq 0$
\begin{align*}
	|d\varphi_t(v)| &\leq Ce^{-\lambda t}|v| \quad \text{for } v \in E_s,\\
	|d\varphi_{-t}(v)| &\leq Ce^{-\lambda t}|v| \quad \text{for } v \in E_u.
\end{align*}
For example, all Riemannian manifolds with strictly negative sectional curvature have Anosov geodesic flow \cite{1967-anosov}. 
When applying microlocal tools, we are often interested in the dual splitting
\begin{equation*}
	T^*\mc{M} = E^*_0 \oplus E^*_s \oplus E^*_u ,
\end{equation*}
defined so that $E^*_0(E_s \oplus E_u) = 0$, $E^*_s(E_s \oplus \FF) = 0$, and $E^*_u(E_u \oplus \FF) = 0$.

\subsection{Liv\v sic theory}

In dynamics, Liv\v sic theory studies the obstructions to solve $Fu=f$, where $F$ is the generator of a certain Anosov flow $\varphi_{t} \colon \mathcal{M} \to \mathcal{M}$. In our context, this translates into nice properties for functions in (or near to) the kernel of X-ray transforms. 
The classical result is the following.

\begin{lemma} \label{lemma:livsic}
    Let $\varphi_{t} \colon \mathcal{M} \to \mathcal{M}$ be a transitive Anosov flow. If $f \in C^{\infty}(\mathcal{M})$ integrates to zero on any closed trajectory of $\varphi$, then there exists $u \in C^{\infty}(\mathcal{M})$ such that $f=Fu$.
\end{lemma}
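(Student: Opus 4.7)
The plan is to prove this in two stages: first obtain a H\"older-continuous primitive $u$ (the classical Liv\v sic theorem), and then upgrade the regularity to $C^\infty$ (the de la Llave--Marco--Moriy\'on argument).

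For the first stage, I would use transitivity to pick a point $p_0$ whose forward orbit is dense in $\mathcal{M}$. Along this orbit, define $u$ by
\[
    u(\varphi_t(p_0)) \defeq \int_0^t f(\varphi_s(p_0))\, ds.
\]
The claim is that $u$ extends to a H\"older-continuous function on $\mathcal{M}$. To show this, suppose $\varphi_{t_1}(p_0)$ and $\varphi_{t_2}(p_0)$ are close in $\mathcal{M}$. By the Anosov closing lemma, the orbit segment from $\varphi_{t_1}(p_0)$ to $\varphi_{t_2}(p_0)$ is exponentially shadowed by a genuine closed orbit $\gamma$ of period close to $t_2-t_1$. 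Since $f$ integrates to zero on $\gamma$, the hyperbolic contraction on $E_s$ and $E_u$ together with smoothness of $f$ yields the estimate
\[
    \bigl| u(\varphi_{t_2}(p_0)) - u(\varphi_{t_1}(p_0)) \bigr| \leq C\, d\bigl(\varphi_{t_1}(p_0), \varphi_{t_2}(p_0)\bigr)^{\alpha}
\]
for some H\"older exponent $\alpha \in (0,1]$. This uniform H\"older estimate on the dense orbit allows $u$ to be extended continuously to all of $\mathcal{M}$. By construction $Fu = f$ in the distributional sense on $\mathcal{M}$.

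For the second stage, I would upgrade $u$ from H\"older to $C^\infty$. Along the orbit of any point $p$, we have $u(\varphi_t(p)) - u(p) = \int_0^t f \circ \varphi_s\, ds$, which is smooth in $t$, so $u$ is smooth along the flow direction $\mathbb{F}$. The heart of the argument is to show that $u$ is smooth along the stable and unstable foliations $W^s$ and $W^u$: for $q \in W^s(p)$, define a candidate primitive direction by
\[
    u(q) - u(p) = -\int_0^\infty \bigl( f(\varphi_t(q)) - f(\varphi_t(p)) \bigr)\, dt,
\]
whose convergence is guaranteed by the exponential contraction on $E_s$, and whose smoothness along leaves follows from smoothness of $f$ and of the stable foliation restricted to each leaf. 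The analogous argument on $W^u$ uses integration to $-\infty$. Once $u$ is known to be $C^\infty$ along each of the three (only H\"older-regular) foliations tangent to $\mathbb{F}$, $E_s$, and $E_u$, two applications of Journ\'e's regularity lemma combine these into joint smoothness on $\mathcal{M}$.

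The main obstacle is the smoothness upgrade, not the H\"older step. The subtlety is that the stable and unstable foliations are only H\"older regular transversally, so one cannot differentiate $u$ directly across leaves; Journ\'e's lemma is precisely the technical device that circumvents this, and verifying its hypotheses (uniform $C^k$ estimates along each foliation, with constants controlled independently of the leaf) is where the real work lies. A convenient packaging of this entire argument is \cite{2019-guillarmou-lefeuvre}*{Theorem 2.5} or the discussion in \cite{Lefeuvre}*{Chapter 6}, and I would simply invoke the result at this point rather than reproduce the full regularity bootstrap.
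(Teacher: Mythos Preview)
Your proposal is correct and outlines the standard two-stage proof (H\"older Liv\v sic via the Anosov closing lemma, then the de la Llave--Marco--Moriy\'on smoothness upgrade via Journ\'e's lemma). The paper itself does not prove this lemma at all: it simply cites the smooth version from \cite{LlMM_smooth_livsic}, so your sketch is in fact an expansion of exactly the argument the paper is invoking by reference.
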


The original theorem by Liv\v sic himself is in H\"older regularity, but we use the smooth version from \cite{LlMM_smooth_livsic}. We furthermore use some variations of the classical version. A second result of interest for us is the non-negative Liv\v sic Theorem \cite{LT05}, which deals with functions whose integrals are, as the name suggests, non-negative.

\begin{lemma} \label{lemma:nnl}
    Let $\varphi_{t} \colon \mathcal{M} \to \mathcal{M}$ be a transitive Anosov flow. If the integral of $f$ over any closed orbit of the flow is non-negative, then there exists $u \in C^{\beta}(\mathcal{M})$ such that for any $x$ and $T$ we have
    \[
    \int_{0}^{T} f(\varphi_{t}x)dt \geq u(\varphi_{T}(x))-u(x).
    \]
\end{lemma}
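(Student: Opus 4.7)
The plan is to construct the sub-action $u$ via a variational (ergodic-optimization) procedure and establish its H\"older regularity from the hyperbolicity of $\varphi$. Fix a scale $\delta > 0$ small relative to the local product structure of $\varphi$ and set
\[
u(x) \defeq \inf \Bigl\{ \int_0^{T'} f(\varphi_t y) \, dt : T' \geq 0,\; y \in \mathcal{M},\; d(\varphi_{T'} y, x) < \delta \Bigr\}.
\]
The desired inequality $\int_0^T f(\varphi_t x) \, dt \geq u(\varphi_T x) - u(x)$ then follows by orbit concatenation: for any near-infimizer $(y, T')$ for $u(x)$, the prolonged pair $(y, T' + T)$ lands near $\varphi_T x$ and is (after a controlled scale adjustment) a competitor for $u(\varphi_T x)$ of value $\int_0^{T'} f(\varphi_t y) \, dt + \int_{T'}^{T' + T} f(\varphi_t y) \, dt$; here the second term equals $\int_0^T f(\varphi_t(\varphi_{T'}y)) \, dt$, which approximates $\int_0^T f(\varphi_t x) \, dt$ by continuity of $f$.

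Boundedness of $u$ from above is immediate, since the trivial competitor $(x, 0)$ forces $u(x) \leq 0$. Boundedness from below is where the hypothesis enters: if $\int_0^{T_n} f(\varphi_t y_n) \, dt \to -\infty$ along some sequence of competitors, then the Anosov closing lemma produces closed orbits $\gamma_n$ shadowing these fragments with $\int_{\gamma_n} f \to -\infty$, contradicting the assumed non-negativity. An equivalent route is to first use the Anosov closing lemma together with Sigmund's density theorem to upgrade the hypothesis to $\int f \, d\mu \geq 0$ for every $\varphi$-invariant Borel probability measure $\mu$, and then deduce finiteness of $u$ by a Birkhoff-type argument.

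The main technical obstacle is the H\"older regularity of $u$, which is where the exponent $\beta$ is fixed. For nearby points $x, x'$, the local product structure supplies a point on the intersection of the local stable manifold of $x'$ with the local unstable manifold of $x$; shadowing a near-minimizer for $u(x)$ by a trajectory landing near $x'$ at the corresponding time produces a competitor for $u(x')$, and the resulting discrepancy is controlled by a geometric series of the form $\sum_{k} C\, e^{-\lambda k}\, \omega_f\!\bigl(e^{-\lambda k}\, d(x,x')\bigr)$ summed along the shadowed orbit, where $\omega_f$ is a modulus of continuity of $f$ and $\lambda$ is the hyperbolicity rate. This sum is what pins down $\beta$, and it represents the sharpest regularity produced by the method even when $f$ is $C^\infty$.
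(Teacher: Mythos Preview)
The paper does not prove this lemma; it is quoted directly from \cite{LT05}. Your outline is precisely the sub-action (Ma\~n\'e potential) construction carried out in that reference, so there is no independent argument in the paper to compare against, and as a sketch your proposal is correct.
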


Another crucial result is the Approximate Liv\v sic Theorem, originally proved in \cite{GL21} (see also \cite{Lefeuvre}*{Theorem 11.1.5}). Here, one deals with functions that integrate to at most $\eps$.

\begin{lemma} \label{lemma:approx}
    Let $\varphi_{t} \colon \mathcal{M} \to \mathcal{M}$ be a transitive Anosov flow. There exists $C,\tau,\beta>0$ constants such that the following holds: for any $f \in C^{1}(\mathcal{M})$ with 
    \[ \sup_{\gamma \in \mathcal{P}}|If(\gamma)|<\eps, \]
    there exist $u,h \in C^{\beta}(\mathcal{M})$ such that $f=Fu+h$ and
    \[ \| h\|_{C^{\beta}(\mathcal{M})} \leq C\eps^{\tau}\|f\|_{C^{1}(\mathcal{M})}^{1-\tau}. \]
\end{lemma}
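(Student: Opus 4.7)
The plan is to follow the approach of Gou\"ezel and Lefeuvre, combining a quantitative Liv\v sic-type construction via the Anosov closing lemma with standard H\"older-space interpolation. The strategy is to first build $u \in C^{\beta}(\mathcal{M})$ for some $\beta > 0$ so that the defect $h = f - Fu$ lies in $C^{\beta}$ and is simultaneously controlled by $\|f\|_{C^1}$ in $C^{\beta}$ norm and by $\varepsilon$ in the $C^0$ norm, then interpolate to extract the target estimate.

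For the construction, I would adapt the proof of Lemma \ref{lemma:livsic}. Using transitivity, fix $x_0 \in \mathcal{M}$ with dense forward orbit, and tentatively set $u(\varphi_t x_0) = \int_0^t f(\varphi_s x_0)\,ds$ along this orbit. Given arbitrary $y \in \mathcal{M}$, choose $t_n \to \infty$ with $\varphi_{t_n} x_0 \to y$ and apply the Anosov closing lemma to produce closed orbits $\gamma_n$ of period approximately $t_n$ that shadow $\varphi_{[0,t_n]}(x_0)$ with shadowing distance decaying exponentially from both endpoints. In the exact case $If(\gamma) \equiv 0$, this argument produces a Cauchy sequence and yields a H\"older primitive. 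Here the hypothesis instead provides $|If(\gamma_n)| < \varepsilon$, so the sequence is only $\varepsilon$-approximately Cauchy, and the resulting extension $u$ satisfies $\|f - Fu\|_{C^0} \lesssim \varepsilon$. Careful estimates using exponential shadowing along the stable and unstable foliations, combined with the $C^1$ bound on $f$ to absorb shadowing errors, yield $\|u\|_{C^\beta},\, \|h\|_{C^\beta} \lesssim \|f\|_{C^1}$ for some $\beta > 0$ depending only on the hyperbolicity rates of the flow.

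The target estimate then follows by interpolation. For $\beta_* \in (0, \beta)$ and $\theta = \beta_*/\beta$, the standard H\"older interpolation inequality gives
\[
\|h\|_{C^{\beta_*}} \lesssim \|h\|_{C^0}^{1-\theta}\,\|h\|_{C^\beta}^{\theta} \lesssim \varepsilon^{1-\theta}\,\|f\|_{C^1}^{\theta},
\]
which is the claim with $\tau = 1-\theta$ and $\beta$ renamed to $\beta_*$.

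The hard part is the quantitative Liv\v sic construction itself: propagating the $\varepsilon$-approximate cocycle relation through the closing-lemma extension while maintaining sharp control of the H\"older norms of both $u$ and $h$. This requires splitting contributions into components along the stable and unstable leaves, summing exponentially decaying shadowing errors, and carefully tracking the dependence on the hyperbolicity exponents to extract the correct H\"older exponent $\beta$. As in \cite{GL21}, this step is the technical heart of the argument; the subsequent interpolation is routine.
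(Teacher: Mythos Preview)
The paper does not give its own proof of this lemma: it is stated as a quotation of the Approximate Liv\v sic Theorem of Gou\"ezel--Lefeuvre \cite{GL21} (see also \cite{Lefeuvre}*{Theorem 11.1.5}), with no argument supplied. Your proposal is therefore not being compared against a proof in the paper but against the cited source, and your outline---build a H\"older primitive $u$ along a dense orbit via the Anosov closing lemma, obtain $\|h\|_{C^0}\lesssim\varepsilon$ and $\|h\|_{C^\beta}\lesssim\|f\|_{C^1}$, then interpolate---is indeed the strategy of \cite{GL21}. You correctly flag that the quantitative closing-lemma construction is the substantive part and defer to that reference for the details, which is exactly what the paper does.
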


Here $\mathcal{P}$ denoted the set of periodic orbits of $\varphi_{t}$.

\subsection{Conjugate points and transversality of the vertical}
\label{sec:conjugate-points}

An Anosov geodesic flow guarantees two useful properties: the absence of conjugate points and the transversality of the vertical bundle to the stable and unstable bundles \cite{Paternain}. Both properties are key to proving the ellipticity of Guillarmou's $\Pi$ operator in \cite{Guillarmou17}, so we review these properties and their generalizations.

Given a flow $\varphi_t \colon SM \to SM$ over the unit tangent bundle of a smooth manifold, we call two points $(x,v)$ and $\varphi_t(x,v)$ \emph{conjugate} if the vertical intersects itself non-trivially under the flow: $d\varphi_t(\V(x,v)) \cap \V(d\varphi_t(x,v)) \neq \{0\}$. For a geodesic flow over a Riemannian manifold, this is equivalent to the usual definition of conjugate points with Jacobi fields.

Using the splitting $TSM = \FF \oplus \H \oplus \V$ described in Section~\ref{sec:thermostats}, we have an equivalent dual description of conjugate points for thermostat flows. We say that the points $(x,v)$ and $\varphi_t(x,v)$ are \emph{conjugate} exactly when there exists $t$ such that
$$d\varphi_t^\top(\FF^* \oplus \H^*(x,v)) \cap \FF^* \oplus \H^*(\varphi_t(x,v)) \neq \{0\}.$$
Let $\Phi \colon T^*SM \to T^*SM$ denote the Hamiltonian lift $\Phi_t(x,\xi) = (\varphi_t(x), d\varphi_t^\top \xi)$, which allows us to once again rephrase the characterization of conjugate points: the point $(x,\xi) \in T^*SM$ is conjugate to $\Phi_t(x,\xi)$ if and only if $(x,\xi) \in \FF^* \oplus \H^*$ implies $\Phi_t(x,\xi) \notin \FF^* \oplus \H^*$.

Next, we recall that for a geodesic Anosov flow, we have $E_s \cap \V = \{0\} = E_u \cap \V$ where $E_s$ and $E_u$ denote the stable and unstable bundles respectively and $\V$ denotes the vertical bundle \cites{1974-klingenberg-riemannian, 1987-mane-on-a-theorem-of-klingenberg}. 

For a general thermostat flow, it is unknown if the Anosov property guarantees the absence of conjugate points or if the vertical bundle is transverse to the stable and unstable bundles. Therefore, we will take these properties as an assumption when necessary. However, the absence of conjugate points and this transversality property is known to hold in both the case of magnetic flows \cite{1994-paternain-paternain} and in the case of thermostats over a surface \cite{ECMR25} and therefore both assumptions may be removed for these particular cases.

\subsection{Anisotropic Sobolev Spaces}

In this subsection we recall the construction and the main properties of certain Hilbert spaces that will be useful to our purposes. The results presented here are due to Faure--Sj\"ostrand \cite{2011-faure-sjostrand}, we also refer to \cite{Lefeuvre}*{Chapter 9}.

Let $\varphi_{t} \colon \mathcal{M} \to \mathcal{M}$ be an Anosov flow with generator $F$. Let $H$ be the Hamiltonian vector field on $T^{*}\mathcal{M}$ induced by the Hamiltonian $\sigma_{P}(x,\xi):=\langle\xi, F(x)\rangle$, the principal symbol of $P:=\frac{1}{i} F$, and let $(\Phi_{t})_{t \in \mathbb{R}}$ be the symplectic flow generated by $H$. Fix an \emph{order function} $m \in C^{\infty}(T^{*}\mathcal{M},[-1,1])$, i.e., $m$ is such that is 0-homogeneous in the $\xi$-variable for $|\xi|$ large enough, such that $m \equiv 1$ in a conic neighborhood of $E_{s}^{*}$, and $m \equiv -1$ in a conic neighborhood of $E_{u}^{*}$. Consider the associated \emph{escape function} $G_{m}(x,\xi):=m(x,\xi) \log|\xi|_{g}$. Then:
\begin{enumerate}[label=\roman*)]
    \item $H G_{m}(x, \xi) \leq 0$  for $|\xi|_{g} \geq R$;
    \item $H G_{m}(x, \xi) \leq-C<0$ for all $(x,\xi)$ in a conic neighborhood of $E_{s}^{*}\cup E_{u}^{*}$ and such that $|\xi|_{g} \geq R$, for some constant $C>0$.
\end{enumerate}
We now define $A_{s}:=\mathrm{Op} (e^{s G_{m}})$. Next, we define the \emph{anisotropic Sobolev spaces} by 
\[
\mathcal{H}_{+}^s(\mathcal{M}):=A_{s}^{-1}(L^2(\mathcal{M})), \quad \mathcal{H}_{-}^s(\mathcal{M}):=A_{s}(L^2(\mathcal{M}))
\]
We have that $C^{\infty}(\mathcal{M}) \subset \mathcal{H}_{h,\pm}^{s}(\mathcal{M}) \subset \mathcal{D}'(\mathcal{M})$. Furthermore:

\begin{lemma} \label{lemma:asino_prop} 
For any $s \geq 0$, the following inclusions are continuous:
\[ 
H^{s}(\mathcal{M}) \subset \mathcal{H}_{\pm}^{s}(\mathcal{M}) \subset H^{-s}(\mathcal{M})
\]
\end{lemma}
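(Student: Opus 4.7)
The plan is to reduce both inclusions to two facts about $A_s=\mathrm{Op}(e^{sG_m})$: that $A_s$ is a pseudodifferential operator of order $s$, and that the same is true of its inverse $A_s^{-1}$. Once this is established, the inclusions follow immediately from the standard continuity of pseudodifferential operators between ordinary Sobolev spaces.

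First, I would verify the symbol estimate. Because $m(x,\xi)\in[-1,1]$ and is $0$-homogeneous in $\xi$ for $|\xi|_g$ large, one has $e^{sG_m(x,\xi)}=|\xi|_g^{sm(x,\xi)}$ outside a compact set in $\xi$, so
\[
|\xi|_g^{-s}\;\le\;e^{sG_m(x,\xi)}\;\le\;|\xi|_g^{s}\qquad\text{for }|\xi|_g\ge R.
\]
Combined with the smoothness of $m$, repeated differentiation gives the symbol estimates $e^{\pm sG_m}\in S^{s}(T^{*}\mathcal{M})$, so that both $A_s$ and $\mathrm{Op}(e^{-sG_m})$ lie in $\Psi^{s}(\mathcal{M})$.

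Second, I would invoke the construction in \cite{2011-faure-sjostrand}. Symbolic calculus gives
\[
A_s\,\mathrm{Op}(e^{-sG_m})=I+R,\qquad R\in\Psi^{-1}(\mathcal{M}),
\]
so $R$ is compact on $L^2(\mathcal{M})$. Modifying $A_s$ by a smoothing operator we may assume $A_s$ is invertible on $L^2$, and a parametrix argument then produces $A_s^{-1}\in\Psi^{s}(\mathcal{M})$ (this is precisely the technical heart of the Faure--Sj\"ostrand construction, and is the only non-routine step; I would simply quote it).

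Finally, I would apply the standard Sobolev mapping properties of $\Psi^{s}$. Since $A_s\colon H^{s}(\mathcal{M})\to L^{2}(\mathcal{M})$ is continuous, $u\in H^{s}$ implies $A_{s}u\in L^{2}$, i.e.\ $u\in A_{s}^{-1}(L^{2})=\mathcal{H}^{s}_{+}(\mathcal{M})$, with
\[
\|u\|_{\mathcal{H}^{s}_{+}}=\|A_{s}u\|_{L^{2}}\lesssim\|u\|_{H^{s}}.
\]
Since $A_s^{-1}\colon L^{2}(\mathcal{M})\to H^{-s}(\mathcal{M})$ is continuous, any $u\in\mathcal{H}^{s}_{+}$ satisfies $u=A_{s}^{-1}(A_{s}u)\in H^{-s}$, with
\[
\|u\|_{H^{-s}}\lesssim\|A_{s}u\|_{L^{2}}=\|u\|_{\mathcal{H}^{s}_{+}}.
\]
The argument for $\mathcal{H}^{s}_{-}$ is perfectly symmetric: use $A_{s}^{-1}\colon H^{s}\to L^{2}$ to get $H^{s}\subset A_{s}(L^{2})=\mathcal{H}^{s}_{-}$, and $A_{s}\colon L^{2}\to H^{-s}$ to get $\mathcal{H}^{s}_{-}\subset H^{-s}$, with the corresponding norm estimates. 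The only non-trivial ingredient is the existence of the pseudodifferential inverse $A_{s}^{-1}\in\Psi^{s}$; everything else is bookkeeping in the symbolic calculus.
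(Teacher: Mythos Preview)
The paper does not actually prove this lemma; it is stated without proof as a recalled fact from \cite{2011-faure-sjostrand} (see also \cite{Lefeuvre}). Your strategy---show that $A_s$ and $A_s^{-1}$ are pseudodifferential of order $s$, then read off the Sobolev mapping properties---is precisely the standard one from those references, so in spirit there is nothing to compare.

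There is, however, one technical wrinkle you gloss over. The assertion $e^{\pm sG_m}\in S^{s}(T^{*}\mathcal{M})$ is \emph{false} if $S^s$ denotes the usual H\"ormander class $S^s_{1,0}$. Differentiating $G_m(x,\xi)=m(x,\xi)\log|\xi|_g$ in either variable produces a factor of $\log|\xi|$ (because $m$ is $0$-homogeneous but not constant), so each derivative of $e^{sG_m}$ loses a logarithm rather than a full power of $\langle\xi\rangle$. Thus $e^{sG_m}$ only lies in $\bigcap_{\epsilon>0}S^{s+\epsilon}_{1,0}$, or in the log-weighted/variable-order classes used in \cite{2011-faure-sjostrand}. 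This does not ruin your argument, but the fix should be made explicit: what you really need is the $L^2$-boundedness of $A_s\langle D\rangle^{-s}$ and of $\langle D\rangle^{-s}A_s^{-1}$. Their principal symbols are $\langle\xi\rangle^{s(m(x,\xi)-1)}$ and $\langle\xi\rangle^{-s(m(x,\xi)+1)}$, both bounded since $-1\le m\le 1$, with derivatives controlled up to logarithmic factors; a Calder\'on--Vaillancourt type theorem (for $S^0_{\rho,\delta}$ with $\delta<\rho$, or its log-class variant) then gives the $L^2$ bound directly, yielding the sharp inclusions $H^s\subset\mathcal{H}^s_\pm\subset H^{-s}$ without any $\epsilon$-loss. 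So replace the sentence claiming membership in $S^s$ either by the correct exotic class or by arguing with these compositions.
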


The moral of these spaces is the following: the space $\mathcal{H}_{+}^{s}$ (resp. $\mathcal{H}_{-}^{s}$) is defined microlocally to have Sobolev regularity $H^{s}$ (resp. $H^{-s}$) in a conic neighborhood of $E_{s}^{*}$ and regularity $H^{-s}$ (resp. $H^{s}$) in a conic neighborhood of $E_u^*$. 

\subsection{Meromorphic extension of resolvents}
\label{sec:resolvents}

Let $\varphi_{t} \colon \mathcal{M} \to \mathcal{M}$ a volume preserving Anosov flow, and let $F$ be its generator. Then $iF$ is an unbounded self-adjoint operator in $L^{2}(\mathcal{M})$, with $L^{2}$-spectrum contained in $\R$ and the resolvents 
\[
R_{\pm}(z) = (\mp F-z)^{-1}:=-\int_{0}^{\infty}e^{\mp tF}e^{-tz}dt.
\]
are well defined bounded operators on $L^{2}(\mathcal{M})$ for $\real (z)>0$ big enough. However, we can extend it when we consider it as an operator between anisotropic Sobolev spaces.

\begin{lemma} [\cite{2011-faure-sjostrand}, \cite{Lefeuvre}*{Theorem 9.1.6}] \label{lemma:resolvent_aniso}
   Let $\varphi_{t} \colon \mathcal{M} \to \mathcal{M}$ an Anosov flow. Then,
   \[
   R_{\pm}(z)=(\mp F-z)^{-1} \colon \mathcal{H}_{\pm}^{s}(\mathcal{M}) \to \mathcal{H}_{\pm}^{s}(\mathcal{M})
   \]
    is a meromorphic family of bounded on $\{\real(z)>-cs+\mu\}$, where $c,\mu>0$ are constants depending on the flow, and $s>0$. Furthermore, the poles do no depend on the choices made on the construction of $\mathcal{H}_{\pm}^{s}(\mathcal{M})$.
\end{lemma}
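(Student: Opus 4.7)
The plan is to adapt the Faure--Sj\"ostrand strategy for microlocal resolvent extension of Anosov flows. The first step is to reduce the problem to $L^2$ via the isomorphism $A_{\pm s} \colon \mathcal{H}_\pm^s \to L^2(\mathcal{M})$: the statement is equivalent to showing that the conjugated operator
\begin{equation*}
    \hat{P}_\pm(s, z) := A_{\pm s}(\mp F - z)A_{\pm s}^{-1}
\end{equation*}
admits a meromorphic inverse on $L^2(\mathcal{M})$ in the half-plane $\{\real(z) > -cs + \mu\}$. By the symbolic calculus applied to conjugation with $\mathrm{Op}(e^{\pm s G_m})$, $\hat{P}_\pm(s,z)$ is a classical first-order pseudodifferential operator with the same principal symbol as $\mp F - z$ and a subprincipal symbol shifted by a term proportional to $s\, H G_m$, where $H$ is the Hamilton vector field of $\sigma_F$. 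With the conventional sign convention for $G_m$, this shift is $\leq 0$ for $|\xi|_g \geq R$ and strictly negative in a conic neighborhood $\mathcal{V}$ of $E_s^* \cup E_u^*$.

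The Fredholm step combines microlocal elliptic estimates with radial point estimates. On $\mathcal{V}$ the negative subprincipal contribution provides ellipticity of the full symbol of $\hat{P}_\pm(s,z)$ for $\real(z) > -cs + \mu$, yielding elliptic estimates there; away from $\mathcal{V}$ and outside a compact set, the principal symbol of $\mp F$ is non-vanishing except along the characteristic set $\{\sigma_F = 0\}$, which meets the boundary of $\mathcal{V}$ precisely at the radial bundles $E_s^*$ and $E_u^*$. At those bundles, $E_s^*$ is a sink and $E_u^*$ a source for the Hamilton flow $\Phi_t$ (or vice versa, depending on the sign choice), and the Dyatlov--Zworski radial point theorems apply provided a threshold condition on the regularity is met; this threshold translates into the bound $\real(z) > -cs + \mu$, with $\mu$ absorbing the lower-order symbolic errors. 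Propagation of singularities along $\{\sigma_F = 0\}$ then joins these local estimates into a global a priori bound
\begin{equation*}
    \|u\|_{L^2} \leq C \| \hat{P}_\pm(s,z) u \|_{L^2} + C \| K u \|_{L^2}, \qquad K \text{ compact},
\end{equation*}
so that $\hat{P}_\pm(s,z)$ is semi-Fredholm; the analogous estimate for the adjoint (obtained by reversing time and flipping the sign of $m$) gives Fredholm index zero.

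Finally, for $\real(z) > 0$ sufficiently large the Laplace transform formula realizes $R_\pm(z)$ as a bounded operator on $L^2$, and hence on $\mathcal{H}_\pm^s$ via Lemma~\ref{lemma:asino_prop}; thus $\hat{P}_\pm(s,z)$ is invertible on a non-empty open set, and the analytic Fredholm theorem extends $R_\pm(z)$ meromorphically to $\{\real(z) > -cs + \mu\}$ with poles of finite rank. Independence of the pole locations from the construction parameters of $G_m$ follows by a standard argument: two escape functions with the same stable/unstable sign structure give anisotropic spaces sharing a common dense subspace, and the uniqueness of meromorphic continuation forces the singular parts to agree. The most delicate step is the radial point analysis: one must precisely track how the subprincipal correction $\pm s H G_m$ interacts with the regularity threshold at $E_s^*$ and $E_u^*$, so as to obtain the claimed linear-in-$s$ enlargement of the holomorphy domain.
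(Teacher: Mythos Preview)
The paper does not prove this lemma; it is quoted directly from \cite{2011-faure-sjostrand} and \cite{Lefeuvre}*{Theorem 9.1.6} with no argument given. Your sketch is a fair summary of the Faure--Sj\"ostrand strategy as presented in those references: conjugate to $L^2$ via $A_{\pm s}$, combine propagation of singularities with radial estimates at $E_s^*$ and $E_u^*$ to obtain a Fredholm estimate, and then invoke analytic Fredholm theory from the region where the Laplace integral converges.

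One point to tighten: you write that on the conic neighborhood $\mathcal{V}$ of $E_s^* \cup E_u^*$ ``the negative subprincipal contribution provides ellipticity of the full symbol of $\hat{P}_\pm(s,z)$''. This is not correct as stated: $E_s^*$ and $E_u^*$ lie in the characteristic set $\{\sigma_F = 0\}$, so the principal symbol vanishes there and no subprincipal term restores ellipticity. What the strict negativity of $HG_m$ on $\mathcal{V}$ actually buys is the correct sign in the positive-commutator (equivalently, radial point) estimate, which is what yields the threshold condition $\real(z) > -cs + \mu$. You do invoke the radial estimates later, so the right mechanism is present in your outline; just drop the ellipticity claim on $\mathcal{V}$ and route that region entirely through the radial/propagation argument.
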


Thus around any pole $z_0 \in \C$, the resolvents admit a Laurent expansion
$$
    R_{\pm}(z) = R^{\hol}_{\pm}(z) - \sum_{k=1}^{N(z_0)} \frac{(\mp F-z)^{k-1}\Pi_{z_0}^{\pm}}{(z-z_0)^k}.
$$
where the operators $\Pi^{\pm}_{z_0}$ are the spectral projections. 

The normal operator we consider in this paper is built from the holomorphic part $R_{\pm}^{\hol}(z)$ of each resolvent and we will need the following characterization of its wavefront set.

\begin{lemma}[\cite{2016-dyatlov-zworski}]
		\begin{equation*}
			\WF(R^{\hol}_+(z)) \subset \Delta(T^*\mathcal{M}) \cup \Omega_+ \cup (E_u^* \times E_s^*),
		\end{equation*}
		where $\Delta(T^* \mathcal{M})$ is the diagonal, and
		\begin{equation*}
			\Omega_+ = \{(\Phi_t(x,\xi), x, \xi) : t \geq 0 \text{ $\mathrm{ and }$ } \xi(F(x)) = 0 \},
		\end{equation*}
		is the positive flow-out.
		\label{lem:res-WF}
	\end{lemma}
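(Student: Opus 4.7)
The plan is to prove the wavefront bound in three stages via the propagator representation of $R_+(z)$ and the Anosov radial source/sink structure. For $\real(z) \gg 0$ I would start with the $L^2$-convergent Bochner integral $R_+(z) = -\int_0^\infty e^{-tF}e^{-tz}\,dt$, whose integrand $e^{-tF}$ has Schwartz kernel supported on the graph of $\varphi_t$ with wavefront set equal to the graph of the symplectic lift $\{(\Phi_t(x,\xi),(x,\xi)) : (x,\xi) \in T^*\mathcal{M}\setminus 0\}$. Splitting $\int_0^\infty = \int_0^\tau + \int_\tau^T + \int_T^\infty$ for small $\tau$ and large $T$ decomposes the wavefront contribution into three pieces matching the three summands in the claim.

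The small-time interval $[0,\tau]$ contributes the diagonal $\Delta(T^*\mathcal{M})$, since $\Phi_0 = \mathrm{id}$. For the bounded-time interval $[\tau,T]$, the wavefront calculus for families of distributions gives the positive flow-out $\{(\Phi_t(x,\xi),(x,\xi)) : t \in [\tau,T]\}$. The restriction to the characteristic set $\{\xi(F(x)) = 0\}$ appearing in the definition of $\Omega_+$ follows from the resolvent identity $(-F-z)R_+(z) = I$: off the characteristic set of $F$, the operator $-F-z$ is microlocally elliptic, so $R_+(z)$ is microlocally smooth there by elliptic regularity.

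The main obstacle is the large-time tail $\int_T^\infty e^{-tF}e^{-tz}\,dt$, which fails to converge on $L^2$ for small $\real(z)$ and must be treated in the anisotropic Sobolev setting of Lemma~\ref{lemma:resolvent_aniso}. Here I would exploit the Anosov radial source/sink structure of the Hamiltonian lift in forward time: $E_u^*$ is a radial source (orbits escape outward in the fiber while projectively attracted to $E_u^*$) and $E_s^*$ is a radial sink (orbits contract inward and escape projectively). Consequently, the only way for the graph $\{(\Phi_t(x,\xi),(x,\xi))\}$ to contribute wavefront as $t \to \infty$ is for the target covector to accumulate near $E_u^*$ and the source covector near $E_s^*$. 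Invoking the Dyatlov--Zworski radial propagation estimates (the technical heart of their microlocal approach to dynamical zeta functions) then rigorously confines the tail wavefront to $E_u^* \times E_s^*$.

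Finally, to pass from $\real(z) \gg 0$ to the full meromorphic domain $\{\real(z) > -cs + \mu\}$ of Lemma~\ref{lemma:resolvent_aniso}, one subtracts the Laurent polar contributions $(-F-z_0)^{k-1}\Pi_{z_0}^+/(z-z_0)^k$ at each pole $z_0$. Each spectral projector $\Pi_{z_0}^+$ has Schwartz kernel given by a finite sum $\sum_i \psi_i \otimes \tilde{\phi}_i$ of tensor products of generalized resonant states $\psi_i \in \mathcal{H}_+^s$ and co-resonant states $\tilde{\phi}_i \in \mathcal{H}_-^s$. These states solve $(-F-z_0)^k \psi_i = 0$ in the anisotropic sense; by standard propagation of singularities together with the defining microlocal regularity of $\mathcal{H}_\pm^s$, we get $\WF(\psi_i) \subset E_u^*$ and $\WF(\tilde{\phi}_i) \subset E_s^*$. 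Thus the polar kernels already lie inside $E_u^* \times E_s^*$, and removing them to form $R_+^{\hol}(z)$ does not introduce any wavefront outside $\Delta(T^*\mathcal{M}) \cup \Omega_+ \cup (E_u^* \times E_s^*)$, giving the claimed bound for the holomorphic part.
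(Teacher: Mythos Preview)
The paper does not prove this lemma; it is quoted directly from Dyatlov--Zworski \cite{2016-dyatlov-zworski} (specifically their Proposition~3.3) and used as a black box. So there is no ``paper's own proof'' to compare your proposal against.

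That said, your sketch is a faithful outline of how Dyatlov--Zworski actually establish the result: the propagator representation for $\real(z)\gg 0$, wavefront calculus for the finite-time piece giving $\Delta(T^*\mathcal{M})\cup\Omega_+$, radial source/sink estimates for the long-time tail giving $E_u^*\times E_s^*$, and then analytic continuation handled by the fact that the residues $\Pi_{z_0}^+$ have kernels in $E_u^*\times E_s^*$. Two minor points of precision: (i) the restriction to the characteristic set $\{\xi(F(x))=0\}$ in $\Omega_+$ arises already from integrating the propagator in $t$ (the $t$-dual variable is the Hamiltonian $\sigma_F$), not just from the resolvent identity applied afterwards; and (ii) when you split $\int_0^\infty$ with sharp cutoffs you pick up boundary contributions, so in practice one uses smooth cutoffs as in Lemma~\ref{lem:smoothing1}. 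Neither affects the overall strategy, which matches the cited reference.
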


	Recall for an operator $A \colon C^{\infty}(Y) \to C^{\infty}(X)$ the wavefront $\WF(A) \in T^*(X \times Y)$ is defined to be the wavefront of the Schwartz kernel $K_A \in \mathcal{D}'(X \times Y)$ of $A$. Importantly, if $u \in \mathcal{D}'(Y)$ then
	\begin{equation*}
		\WF(Au) \subset \WF'(A) \circ \WF(u) \cup \WF_X'(A),
	\end{equation*}
	where we are using the notations
	\begin{align*}
		\WF_X'(A) &= \{(x, \xi) \in T^*X \sm 0 : \exists y \in Y \text{ s.t. } (x,\xi, y, 0) \in \WF'(A)\},\\
		\WF'(A) \circ \WF(u) &= \{(x,\xi) \in T^*X \sm 0 : \exists (y,\eta) \in \WF(u) \text{ s.t. } (x,\xi,y,\eta) \in \WF'(A)\},
	\end{align*}
	and in the above
	\begin{equation*}
		\WF'(A) \defeq \{(x,\xi,y,-\eta) : (x,\xi,y,\eta) \in \WF(A)\}.
	\end{equation*}
    Then the expression for $\WF(R^{\hol}_{\pm})$ given in Lemma~\ref{lem:res-WF} bounds $\WF(R_+(z)u)$ by
	
	\begin{cor}
		Take $u \in \mathcal{D}'(M)$, then
		\begin{align*}
			&\WF(R^{\hol}_+(z)u)\\
			&\subset \{(x,-\xi): (x, \xi) \in \WF(u)\}\\ 
			&\cup \{(x,\xi) \in T^*\mathcal{M} \sm 0 : \exists (y, -\eta) \in \WF(u) \text{ $\mathrm{s.t.}$ } \eta(F(x)) = 0, (x,\xi) = \Phi_t(y,\eta)\}\\ 
			&\cup \{(x,\xi) \in T^*\mathcal{M} \sm 0 : \exists (y,-\eta) \in \WF(u) \text{ $\mathrm{s.t.}$ } (x,\xi,y,-\eta) \in E_u^* \times E_s^*\}.
		\end{align*}
		\label{lem:WF-of-Ru}
	\end{cor}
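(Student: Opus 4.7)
The plan is to apply the standard wavefront calculus recalled just before the statement,
\[
\WF(Au) \subset \WF'(A) \circ \WF(u) \cup \WF_{\mathcal{M}}'(A),
\]
to $A = R^{\hol}_+(z)$, feeding in the bound
\[
\WF(R^{\hol}_+(z)) \subset \Delta(T^*\mathcal{M}) \cup \Omega_+ \cup (E_u^* \times E_s^*)
\]
from Lemma~\ref{lem:res-WF}. The rest is a routine case analysis over the three components with careful sign bookkeeping.

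First, I would pass to the primed wavefront set by negating the second covector. The primed diagonal is $\{(x,\xi,x,-\xi)\}$; the primed flow-out is $\{(\Phi_t(y,\eta), y, -\eta) : t \geq 0,\ \eta(F(y)) = 0\}$; and the primed Anosov piece is again $E_u^* \times E_s^*$, since $E_s^*$ is a linear subbundle and hence preserved by the sign flip.

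Next I would compose each primed piece with $\WF(u)$. The diagonal forces $(y,\eta) = (x,-\xi)$, yielding $\{(x,-\xi) : (x,\xi) \in \WF(u)\}$. For the flow-out, imposing $(x,\xi,y,-\eta) \in \Omega_+$ together with $(y,\eta) \in \WF(u)$ and relabeling $\eta \mapsto -\eta$ gives $(x,\xi) = \Phi_t(y,\eta)$ with $(y,-\eta) \in \WF(u)$ and $\eta(F(y)) = 0$; the equivalence of this last condition with $\xi(F(x)) = 0$ follows because the Hamiltonian $\sigma_P(x,\xi) = \xi(F(x))$ is constant along its own flow $\Phi_t$. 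The Anosov piece contributes the third line directly.

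Finally I would verify $\WF_{\mathcal{M}}'(R^{\hol}_+(z)) = \emptyset$, so that the boundary term in the wavefront formula vanishes. Forcing the $\eta$-slot to zero pushes each component into the zero section: on the diagonal $\xi = -\eta = 0$; on the flow-out $\Phi_t(y,0) = (\varphi_t(y),0)$ since the Hamiltonian flow is linear on fibers; and the Anosov piece is excluded by the standing convention that $E_s^* \subset T^*\mathcal{M} \setminus 0$. Combining the three pieces yields the stated bound. The only non-mechanical step is the Hamiltonian-invariance observation for the flow-out; everything else is sign tracking.
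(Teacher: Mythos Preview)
Your proposal is correct and follows precisely the approach implicit in the paper: apply the wavefront calculus formula to the three components of $\WF(R^{\hol}_+(z))$ from Lemma~\ref{lem:res-WF}. The paper does not spell out a proof---it simply records the corollary as an immediate consequence---so your write-up supplies the routine bookkeeping the authors omit. Your observation that $\eta(F(y))=0$ and $\xi(F(x))=0$ coincide along $\Phi_t$ by conservation of the Hamiltonian is a nice touch that clarifies what appears to be a typographical slip in the stated condition.
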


Mixing Anosov flows preserving a smooth volume form have a simple pole at $0$ \cite{Guillarmou17}*{Lemma 2.5} and so in this case
\begin{equation}
    R_{\pm}(z) = R^{\hol}_{\pm}(z) - \frac{\Pi^{\pm}_{0}}{z}.
    \label{eq:simple-pole}
\end{equation}

For a topologically transitive Anosov flow on a 3-manifold (e.g. $SM$ where $M$ is a surface), then \eqref{eq:simple-pole} also holds by \cite[Lemma 3.1]{2022-cekic-paternain}. In fact, the same argument by Cecki\'c and Paternain holds verbatim for arbitrary dimension, so \eqref{eq:simple-pole} holds for a general topologically transitive Anosov flow.

\subsection{The Normal Operator}

For a topologically transitive Anosov flow, we consider \emph{Guillarmou's operator} $\Pi :=-(R_{+}^{\hol}(0) + R_{-}^{\hol}(0))$. 

Next, suppose the Anosov flow $\varphi_t \colon SM \to SM$ is over the unit tangent bundle of a Riemannian manifold $M$. Recall a symmetric covariant $m$-tensor $h \in C^{\infty}(S^m(T^*M))$ can be naturally identified with a function over $SM$ by the map $\pi_m^* \colon C^{\infty}(S^m(T^*M)) \to C^{\infty}(SM)$ defined by $(\pi^*h)(x,v) = h_x(v, \ldots, v)$. This identification map has natural adjoint 
\begin{align*}
\pi_{m*} \colon \mathcal{D}'(SM) \to \mathcal{D}'(S^m(T^*M)), \quad
\ip{\pi_{m*}f}{h}_{S^m(T^*M)} \defeq \ip{f}{\pi_{m}^*h}_{SM}
\end{align*}
given by the natural pairing with a smooth symmetric covariant $m$-tensor $h \in C^{\infty}(S^m(T^*M))$ induced by the metric on $M$. We also recall (\cite{Lefeuvre}*{Lemma 14.1.6}) that 
\begin{equation} \label{eq:pi_m^*}
    (\pi_{m}^{*}h)(v_{1},\ldots,v_{m})=\int_{S_{x}M}h(v)g(v,v_{1}) \cdots g(v,v_{m})dv.
\end{equation}
Considering these three operators together gives the \emph{normal operator} $\Pi_m \defeq \pi_{m*}(\Pi + 1 \otimes 1)\pi_m^*$. 
We also recall some properties that will be useful throughout this work. They were originally proved in \cite{Guillarmou17}, see also \cite{Lefeuvre}*{Lemma 9.2.9}.

\begin{lemma} \label{lemma:pi_properties}
Let $\varphi_{t} \colon SM \to SM$ be an topologically transitive Anosov flow with generator given by $F$. Then:
    \begin{itemize}
        \item[(i)] $\Pi F=0$.
    \end{itemize}
    If in addition the flow preserves a volume form, then:
    \begin{itemize}
        \item[(ii)] $\Pi$ is formally self-adjoint.
        \item[(iii)] $\langle \Pi f,f \rangle_{L^{2}}  \geq 0$. Moreover, the equality is equivalent to $\Pi f=0$, and this is furthermore equivalent to the existence of $v \in \ker F$ and $u \in C^{\infty}(SM)$ so that $f=Fu+v$.
    \end{itemize}
\end{lemma}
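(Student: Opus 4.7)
The plan is to deduce all three properties by combining the Laurent expansion of $R_{\pm}(z)$ at $z=0$ with the resolvent identity, together with the skew-adjointness $F^{*}=-F$ in the volume-preserving case needed for (ii)--(iii).

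For (i), I would start from $R_\pm(z)(\mp F - z) = I$, rearranged as $R_\pm(z) F = \mp I \mp z R_\pm(z)$. Substituting the Laurent expansion $R_\pm(z) = R_\pm^{\mathrm{hol}}(z) - \Pi_0^{\pm}/z$ and comparing residues at $z=0$ first yields $\Pi_0^{\pm} F = 0$; the holomorphic parts then satisfy $R_\pm^{\mathrm{hol}}(0) F = \mp (I - \Pi_0^{\pm})$. Summing gives
\[
    -\Pi F = \bigl(R_+^{\mathrm{hol}}(0) + R_-^{\mathrm{hol}}(0)\bigr)F = \Pi_0^{+} - \Pi_0^{-}.
\]
Since the pole at $0$ is simple, $\Pi_0^{\pm}$ is the spectral projection onto $\ker F$ along $\operatorname{range} F$, which is independent of the sign of $F$; thus $\Pi_0^{+} = \Pi_0^{-}$ and $\Pi F = 0$. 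Property (ii) then follows quickly: in the volume-preserving case $F^{*}=-F$ gives $R_+(z)^{*} = R_-(\bar z)$, and passing to the holomorphic parts at the real point $z=0$ yields $R_+^{\mathrm{hol}}(0)^{*} = R_-^{\mathrm{hol}}(0)$, so $\Pi^{*} = -(R_-^{\mathrm{hol}}(0) + R_+^{\mathrm{hol}}(0)) = \Pi$.

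For the non-negativity in (iii), the key identity is
\[
    \langle \Pi f, f\rangle_{L^{2}} = \int_{-\infty}^{\infty} \langle f\circ\varphi_{t}, f\rangle_{L^{2}}\,dt
\]
for mean-zero $f\in C^{\infty}(\mathcal{M})$. I would derive this from $R_\pm(z) = -\int_{0}^{\infty} e^{-tz} e^{\mp tF}\,dt$, valid on $L^{2}$ for $\operatorname{Re} z$ large and extending to $R_\pm^{\mathrm{hol}}(0)$ for mean-zero $f$ by the exponential mixing of the Anosov flow; summing the two contributions over $[0,\infty)$ produces the integral over $\mathbb{R}$. The reduction to the mean-zero case uses that $\Pi$ annihilates $\ker F$, which follows from $R_\pm(z) v = -v/z$ and $\Pi_0^{\pm} v = v$ for $v\in\ker F$, giving $R_\pm^{\mathrm{hol}}(0) v = 0$. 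The correlation $C_f(t) = \langle f\circ\varphi_{t}, f\rangle$ is positive-definite, so by Bochner's theorem its Fourier transform is a non-negative measure and $\int_{\mathbb{R}} C_f(t)\,dt = \widehat{C_f}(0) \geq 0$. Combined with the self-adjointness from (ii), this yields a non-negative square root $\sqrt{\Pi}$, so $\langle \Pi f, f\rangle = 0$ if and only if $\Pi f = 0$.

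The final equivalence $\Pi f = 0 \Leftrightarrow f = Fu + v$ with $v\in\ker F$ and $u\in C^{\infty}(\mathcal{M})$ requires both directions. The implication ``$\Leftarrow$'' is immediate from (i) and $\Pi v = 0$ for $v\in\ker F$. For ``$\Rightarrow$'', derive the companion identity $-F R_+^{\mathrm{hol}}(0) = I - \Pi_0^{+}$ by the same Laurent-expansion bookkeeping used in (i); this yields $f = F(-R_+^{\mathrm{hol}}(0) f) + \Pi_0^{+} f$ for any $f$. The candidate $u = -R_+^{\mathrm{hol}}(0) f$ lies a priori only in the anisotropic Sobolev space $\mathcal{H}_{+}^{s}$, and the hypothesis $\Pi f = 0$ is what saves us: it gives $R_+^{\mathrm{hol}}(0) f = -R_-^{\mathrm{hol}}(0) f$, placing $u$ in $\mathcal{H}_{+}^{s} \cap \mathcal{H}_{-}^{s}$ for every $s>0$, which by the wavefront description in Lemma~\ref{lem:res-WF} forces any wavefront point of $u$ to lie in both $E_{u}^{*}$ and $E_{s}^{*}$, hence to be empty, so $u\in C^{\infty}(\mathcal{M})$. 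I expect this microlocal smoothing step to be the main technical obstacle; the rest of the argument is Laurent bookkeeping combined with Bochner's theorem.
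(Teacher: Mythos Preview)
The paper does not supply its own proof of this lemma; it is quoted from \cite{Guillarmou17} and \cite{Lefeuvre}*{Lemma 9.2.9}, so there is nothing in the paper to compare against directly. Your outline follows the argument in those references and is essentially correct, with one genuine gap and one point that needs more care.

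The gap is in (iii): you invoke ``exponential mixing of the Anosov flow'' to justify that $\int_0^\infty \langle f\circ\varphi_{\mp t}, f\rangle\,dt$ converges for mean-zero $f$ and equals $\langle -R_\pm^{\mathrm{hol}}(0) f, f\rangle$. Volume-preserving Anosov flows are ergodic but not in general exponentially mixing (this is known for contact Anosov flows, but the lemma does not assume a contact structure). The standard fix bypasses mixing entirely: by Stone's theorem $iF$ is self-adjoint on $L^2$ with spectral measure $d\nu_f$, and for real $\epsilon>0$ one computes directly
\[
-\bigl\langle (R_+(\epsilon) + R_-(\epsilon)) f, f\bigr\rangle \;=\; \int_{\mathbb{R}} \frac{2\epsilon}{\lambda^2 + \epsilon^2}\,d\nu_f(\lambda) \;\ge\; 0.
\]
For mean-zero $f$ the residue terms vanish, and letting $\epsilon\to 0^+$ gives $\langle \Pi f, f\rangle \ge 0$ with no mixing hypothesis.

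The point needing more care is your claim in (i) that $\Pi_0^+ = \Pi_0^-$ because both are ``the spectral projection onto $\ker F$ along $\operatorname{range} F$''. Both projectors do have range equal to the constants and annihilate $\operatorname{range} F$, but this does not pin them down: on smooth functions each is of the form $1\otimes\mu_\pm$ with $\mu_\pm$ the SRB measure for the forward, respectively backward, flow, and these coincide precisely when a smooth invariant volume exists. Under the volume-preserving hypothesis (the only setting in which the paper actually invokes (i)) your argument is fine, since then $\mu_+=\mu_-$ is the Liouville measure; without it the equality $\Pi_0^+ = \Pi_0^-$ requires a separate justification.

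Your treatment of (ii), and of the smoothness of $u$ in (iii) via $\operatorname{WF}(R_\pm^{\mathrm{hol}}(0)f)\subset E_{u/s}^*$ and $E_u^*\cap E_s^*=\{0\}$, is correct and is, as you anticipated, the substantive step.
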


\section{Normal operator for Thermostat Flows}
\label{sec:thermo-normal}

We may directly extend the definition of the normal operator from geodesic flows to thermostat flows. Indeed, consider a general Anosov thermostat flow $\varphi_t \colon SM \to SM$ as defined above (Section \ref{sec:thermostats}) and let $R_{\pm}(z) = (\mp X-z)^{-1}$ denote the meromorphic extension of the resolvents defined in Section~\ref{sec:resolvents}. Then we may define the operator $\Pi \defeq -(R_{+}^{\hol}(0) + R_{-}^{\hol}(0))$ in exactly the same way, which allows us to define the normal operator $\Pi_m \defeq \pi_{m*}(\Pi + 1 \otimes 1)\pi_{m}^*$. In this section, we prove Theorem~\ref{thm:main-thermostat-thm}. To begin with it, we write the holomorphic parts as follows:

    \begin{lemma}
    \label{lem:smoothing1}
        Suppose the resolvent $R_{\pm}(z)$ of an Anosov flow $\varphi_t \colon \mathcal{M} \to \mathcal{M}$ has a simple pole at $0$. Then we have
        \begin{equation}
            R^{\hol}_{\pm}(0) = \int_0^{\infty} \chi(t) \varphi^*_{\mp t}dt + R_{\pm}^{\hol}(0)\int_0^{\infty} \chi'(t) \varphi^*_{\mp t}dt - \int_0^{\infty} \chi'(t) dt \Pi_0^{\pm}
            \label{eq:hol-res-expression}
        \end{equation}
        where $\chi(t)$ is any compactly supported smooth bump function that is $1$ near $0$.
        \label{lem:resolvent-comp}
    \end{lemma}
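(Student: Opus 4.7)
The plan is to decompose $R_\pm(z)$ using the partition $1 = \chi(t) + (1-\chi(t))$ in the integral representation, turn the long-time piece into something recognizable via integration by parts in $t$, and then extract the claimed formula from the Laurent expansion in Section~\ref{sec:resolvents}.

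Specifically, for $\real(z)$ large enough that the defining integral converges in $L^2(\mathcal{M})$, I would write
\[
R_\pm(z) = -\int_0^\infty \chi(t)\, e^{\mp tF} e^{-tz}\, dt \;-\; \int_0^\infty (1-\chi(t))\, e^{\mp tF} e^{-tz}\, dt,
\]
where the first integral is entire in $z$ since $\chi$ is compactly supported. Call the second integral $C(z)$. The key observation is that $\partial_t(e^{\mp tF} e^{-tz}) = (\mp F - z) e^{\mp tF} e^{-tz}$, so integration by parts in $t$, with boundary terms vanishing at $t=0$ because $1-\chi(0) = 0$ and at $t = \infty$ because of the decay of $e^{-tz}$, gives
\[
(\mp F - z)\, C(z) = \int_0^\infty \chi'(t)\, e^{\mp tF} e^{-tz}\, dt.
\]
Inverting yields $C(z) = R_\pm(z) \int_0^\infty \chi'(t)\, e^{\mp tF} e^{-tz}\, dt$, which expresses the long-time piece of $R_\pm(z)$ in terms of $R_\pm(z)$ composed with a compactly supported, hence holomorphic, operator family.

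Next I would substitute the Laurent expansion $R_\pm(z) = R_\pm^{\hol}(z) - \Pi_0^\pm/z$ from \eqref{eq:simple-pole} into this identity. To handle the terms involving $\Pi_0^\pm$, I would use that $\Pi_0^\pm$ is the spectral projection onto $\ker F$, so $\Pi_0^\pm F = F\Pi_0^\pm = 0$ and therefore $\Pi_0^\pm e^{\mp tF} = \Pi_0^\pm$. This collapses $\Pi_0^\pm$ composed with the $\chi'$-integral to the scalar multiple $\bigl(\int_0^\infty \chi'(t) e^{-tz}\, dt\bigr) \Pi_0^\pm$. Since $\chi(0) = 1$ forces $\int_0^\infty \chi'(t)\, dt = -1$, the $1/z$ terms cancel exactly; what remains is a holomorphic identity in $z$ whose value at $z = 0$ is the stated formula \eqref{eq:hol-res-expression}, after expanding $\varphi_{\mp t}^* = e^{\mp tF}$.

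The main obstacle is ensuring each manipulation is justified on the correct space so that one may take $z \to 0$: the integral representation of $R_\pm(z)$ converges only for $\real(z)$ large on $L^2$. This is resolved by working instead on the anisotropic Sobolev spaces $\mathcal{H}_\pm^s$ of Lemma~\ref{lemma:resolvent_aniso}, on which $R_\pm(z)$ admits a meromorphic continuation. The two integrals involving $\chi$ and $\chi'$, being compactly supported in $t$, define entire operator-valued functions of $z$ with values in the bounded operators on $\mathcal{H}_\pm^s$, so the identity derived for large $\real(z)$ extends by analytic continuation to a punctured neighborhood of $z=0$. Matching holomorphic parts then gives \eqref{eq:hol-res-expression}.
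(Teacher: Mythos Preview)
Your proposal is correct and follows essentially the same approach as the paper: split the defining integral by $\chi + (1-\chi)$, rewrite the long-time piece as $R_\pm(z)$ composed with the $\chi'$-integral, substitute the Laurent expansion $R_\pm(z) = R_\pm^{\hol}(z) - \Pi_0^\pm/z$, and read off the holomorphic part at $z=0$ using $\Pi_0^\pm e^{\mp tF} = \Pi_0^\pm$. You are somewhat more explicit than the paper about the integration-by-parts step and about justifying the passage to $z=0$ via the anisotropic spaces of Lemma~\ref{lemma:resolvent_aniso}, but the argument is the same.
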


    \begin{proof}

        We give the computation for $R_+^{\hol}(0)$ following \cite{Guillarmou17, Lefeuvre} with the case of $R_-^{\hol}(0)$ being similar. To begin, take any smooth bump function $\chi(t)$ so that $\chi \equiv 1$ on $[0, \eps/2]$ and $\chi \equiv 0$ on $[\eps, \infty)$, then for $\real(z) > 0$ decompose the resolvent as follows.
	\begin{align}
		R_+(z) &= \int_0^{\infty} e^{-tz}\varphi^*_{-t}dt\\
		&= \int_0^{\infty} \chi(t)e^{-tz}\varphi^*_{-t}dt
		+ \int_0^{\infty} (1-\chi(t))e^{-tz}\varphi^*_{-t}dt\\
		&= \int_0^{\infty} \chi(t) e^{-tz} \varphi^*_{-t}dt
		+ R_+(z)\int_0^{\infty} \chi'(t) e^{-tz}\varphi^*_{-t}dt.
        \label{eq:res-exp}
	\end{align}

    Now we are equipped to compute $R^{\hol}_+(0) = \lim_{z \to 0} R^{\hol}_+(z)$. Taking the limit of the holomorphic part of the above and using $e^{-tz} = 1 - tz + \mc{O}((tz)^2)$ yields
    \begin{align*}
		R_+^{\hol}(0) &= \lim_{z \to 0}R_+^{\hol}(z)\\
		&= \int_0^{\infty} \chi(t) \varphi^*_{-t}dt + R_+^{\hol}(0)\int_0^{\infty} \chi'(t) \varphi^*_{-t}dt - \int_0^{\infty} \chi'(t) dt \Pi_0^+.
	\end{align*}
    Note the last term in the above expression follows by the integration by parts and the following computation
    \begin{align*}
        \Pi_{0}^{+}\int_{0}^{\infty}\chi'te^{-tF}dt&=\Pi_{0}^{+} \left( \chi te^{-tF}|_{0}^{\infty}-\int_{0}^{\infty}\chi e^{-tF}dt-\int_{0}^{\infty}\chi tFe^{-tF}dt\right) \\
        &=-\int_{0}^{\infty}\chi \Pi_{0}^{+}e^{-tF}dt-\int_{0}^{\infty}\chi t(\Pi_{0}^{+}F)e^{-tF}dt
        =-\int_{0}^{\infty}\chi dt \Pi_{0}^{+},
    \end{align*}
    where in the last equality we used that for the first integral that $\Pi_{0}^{+}e^{-Ft}=\Pi_{0}^{+}$ since the $\Pi_{0}^{+}$ is the projection onto the set of invariant functions, and for the second one we used the fact $\Pi_{0}^{+}F=0$.

    \end{proof}
	
    The decomposition for the resolvents provided in Lemma~\ref{lem:smoothing1} yields a decomposition for the normal operator $\Pi_m = \pi_{m*}\Pi\pi_m^*$ by composing with the pullback operator $\pi_m^*$ and the pushforward operator $\pi_{m*}$. After applying these operators, we will follow \cite{Guillarmou17} to show the middle term of \eqref{eq:hol-res-expression} is smoothing. First, however, we must recall the wavefront set of the operators $\pi_m^*$ and $\pi_{m*}$. Indeed, recall the wavefront of the pullback operator is constrained as follows.
    \begin{lemma}[\cite{Lefeuvre}]
        \begin{equation}
           \WF(\pi_m^* h) \subset \{(x, \xi) \in T^*M : \exists (y,\eta) \in \WF(h) \text{ $\mathrm{s.t.}$ }  y = \pi_0(x), \xi = d\pi_0^\top \eta\}.
           \label{eq:pullback-wf}
        \end{equation}
    \end{lemma}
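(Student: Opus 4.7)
The plan is to reduce this to the standard theorem on wavefront sets under pullback by a smooth submersion, after stripping away the bundle structure. Observe first that $\pi_m^*$ is not literally a pullback of distributions, because its output depends pointwise on the tautological direction $v \in S_xM$ through the evaluation $h_x(v,\ldots,v)$. However, this dependence is smooth in $v$, which is the key point.

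Working locally, I would choose coordinates on a chart $U \subset M$ and a smooth local frame for $S^m(T^*M) \to M$, writing
\[
    h = \sum_\alpha h_\alpha \, e^\alpha,
\]
where $e^\alpha$ are smooth symmetric tensor sections and $h_\alpha$ are scalar distributions on $U$. The wavefront set of $h$ is then (by definition, and independent of frame) the union of the $\WF(h_\alpha)$, all lying in $T^*U$. Under this trivialization,
\[
    (\pi_m^* h)(x,v) = \sum_\alpha h_\alpha(x)\, \widetilde{e}^\alpha(x,v), \qquad \widetilde{e}^\alpha(x,v) := e^\alpha_x(v,\ldots,v),
\]
where each $\widetilde{e}^\alpha$ is smooth on $\pi_0^{-1}(U) \subset SM$. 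Hence, up to multiplication by smooth functions, $\pi_m^* h$ is a finite sum of bundle-projection pullbacks $\pi_0^* h_\alpha$.

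Now I would invoke two standard facts from the microlocal calculus. First, multiplication by a smooth function does not enlarge the wavefront set, so $\WF(\pi_m^* h) \subset \bigcup_\alpha \WF(\pi_0^* h_\alpha)$. Second, since $\pi_0 \colon SM \to M$ is a smooth submersion (so $d\pi_0$ is everywhere surjective and consequently $d\pi_0^\top$ is everywhere injective, meaning no covector from the base falls into the ``forbidden set''), H\"ormander's pullback theorem applies unconditionally and gives
\[
    \WF(\pi_0^* u) \subset \{(x,v;\, d\pi_0^\top \eta) \in T^*SM \setminus 0 : (\pi_0(x,v), \eta) \in \WF(u)\}
\]
for every $u \in \mathcal{D}'(M)$. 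Combining these two inputs yields the claimed inclusion.

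The only subtle point is ensuring the frame-decomposition step is intrinsic, i.e.\ that the wavefront set of a tensor-valued distribution is well-defined and coincides with the union of the component wavefront sets in any smooth frame; this is immediate from the fact that a smooth change of frame is multiplication by a smooth invertible matrix, which preserves wavefront sets component by component. Apart from this bookkeeping, the argument is routine, and I do not anticipate a genuine obstacle — the result is essentially H\"ormander's pullback theorem for the submersion $\pi_0$, wrapped in the tensor evaluation, which is harmless because it is fiberwise smooth.
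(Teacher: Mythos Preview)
Your argument is correct and is the standard one: reduce to scalar pullbacks by local trivialization of $S^m(T^*M)$, then apply H\"ormander's pullback theorem for the submersion $\pi_0$, noting that multiplication by the smooth fiberwise functions $\widetilde{e}^\alpha(x,v)$ does not enlarge the wavefront set. The paper itself gives no proof of this lemma, simply citing \cite{Lefeuvre}; your approach is exactly the routine argument one finds there, so there is nothing to compare.
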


    Next, note the wavefront of $\pi_m^*$ selects only the wavefront in $\X \oplus \H$ as follows.
    \begin{lemma}[\cite{Lefeuvre}]
        \begin{equation}
            \WF(\pi_{m*} u) \subset \{(x,\xi) : \exists v \in S_xM \text{ $\mathrm{s.t.}$ } ((x,v), d\pi^\top \xi) \in \WF(u)\}.
            \label{eq:pushforward-wf}
        \end{equation}
    \end{lemma}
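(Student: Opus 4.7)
The plan is to reduce to the scalar fiber-integration operator $\pi_{0*} \colon \mathcal{D}'(SM) \to \mathcal{D}'(M)$ and then apply the wavefront composition calculus to its Schwartz kernel. First I would use \eqref{eq:pi_m^*}: in a local frame of $S^m(T^*M)$, each component of $\pi_{m*}u$ is a scalar distribution of the form $\pi_{0*}(P \cdot u)$, where $P(x,v)$ is a smooth polynomial in $v$ built from metric contractions. Multiplication by a $C^\infty$ function cannot enlarge $\WF$, and the wavefront of a bundle-valued distribution is read componentwise, so it suffices to establish the bound for $\pi_{0*}$.

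Next I would identify the Schwartz kernel $K$ of $\pi_{0*}$. Being the adjoint of $\pi_0^*$, it is the delta distribution along the graph $\Gamma = \{(x, (y,w)) \in M \times SM : x = \pi_0(y,w)\}$, so $\WF(K) \subset N^*\Gamma \setminus 0$. A short computation from $T\Gamma = \{(Y,Z) \in T_xM \times T_{(x,v)}SM : Y = d\pi_0(Z)\}$ gives
\[
    N^*\Gamma \setminus 0 = \{((x,\xi),\,((x,v), -d\pi_0^\top \xi)) : (x,v) \in SM,\ \xi \in T_x^* M \setminus 0\}.
\]
Feeding this into the composition rule
\[
    \WF(\pi_{0*}u) \subset \WF'(\pi_{0*}) \circ \WF(u) \cup \WF'_M(\pi_{0*})
\]
recalled just above this lemma, the second term vanishes because $d\pi_0^\top$ is injective, so no element of $N^*\Gamma \setminus 0$ has a zero covector on the $SM$-side; the first yields, after the sign flip defining $\WF'$, precisely the claimed
\[
    \{(x,\xi) \in T^*M \setminus 0 : \exists\, v \in S_xM \text{ with } ((x,v), d\pi_0^\top \xi) \in \WF(u)\}.
\]

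The main thing to watch is the sign convention relating $\WF$ and $\WF'$, together with the orientation of the conormal bundle; once those are pinned down, the result reduces to the standard wavefront behavior of push-forward along a submersion with compact fibers, and there is no substantive obstacle.
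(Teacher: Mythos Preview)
Your argument is correct. The reduction from $\pi_{m*}$ to $\pi_{0*}$ via the componentwise formula is exactly right, and your identification of the Schwartz kernel of $\pi_{0*}$ with the conormal distribution along the graph $\Gamma=\{(x,(y,w)):x=\pi_0(y,w)\}$, together with the computation $N^*\Gamma\setminus 0=\{((x,\xi),((x,v),-d\pi_0^\top\xi)):\xi\neq 0\}$, is accurate. The vanishing of $\WF'_M(\pi_{0*})$ follows, as you say, from the injectivity of $d\pi_0^\top$ (equivalently, $\pi_0$ being a submersion), and the composition rule then yields the claimed set.

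As for comparison: the paper does not prove this lemma at all --- it is stated with a citation to \cite{Lefeuvre} and used as a black box. Your write-up supplies a clean self-contained proof via the conormal-kernel/composition calculus, which is the standard route and presumably what the cited reference does as well. The only cosmetic point is that your reference to \eqref{eq:pi_m^*} is really to the formula for $\pi_{m*}$ (the paper's label is slightly misleading), but the content you invoke is correct.
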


    Additionally, the following result will allow us to further restrict the wavefront of terms in \eqref{eq:hol-res-expression}.
    \begin{lemma}[\cite{Lefeuvre}]
	Let $F$ generate the flow $\varphi_t \colon \mc{M} \to \mc{M}$ with symplectic lift $\Phi \colon T^*\mc{M} \to T^*\mc{M}$ and let $\chi \in C_c^{\infty}(\R)$ be a cutoff function. Then,
	\begin{align*}
		&\WF\l(\int_{-\infty}^{\infty}\chi(t)\varphi_t^* dt u\r)\\
		&\subset \{(x,\xi) \in T^*M : \xi(F(x)) = 0 \text{ $\mathrm{and}$ } \exists t \in \supp(\chi) \text{ $\mathrm{s.t.}$ } \Phi_t(x,\xi) \in \WF(u)\}.
	\end{align*}
	\label{lem:integral-WF}
	\end{lemma}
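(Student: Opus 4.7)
The plan is to realize the operator $Au := \int \chi(t)\,\varphi_t^* u\, dt$ as the pushforward of a pullback on the product space $\R \times \mc{M}$, then apply H\"ormander's wavefront calculus at each step. Concretely, I would set $\Psi \colon \R \times \mc{M} \to \mc{M}$ by $\Psi(t,x) = \varphi_t(x)$ and let $\pi \colon \R \times \mc{M} \to \mc{M}$ be the projection $\pi(t,x) = x$; then $Au = \pi_*\bigl(\chi(t)\, \Psi^* u\bigr)$. Since each $\varphi_t$ is a diffeomorphism, $\Psi$ is a submersion, so $\Psi^* u$ is well-defined for any $u \in \mc{D}'(\mc{M})$, and the compact support of $\chi$ in $t$ makes the pushforward legitimate.

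Next I would compute the differential $d\Psi_{(t,x)}(\partial_t, v) = F(\varphi_t(x)) + d\varphi_t v$; its transpose sends $\eta \in T^*_{\varphi_t(x)}\mc{M}$ to the covector $\bigl(\eta(F(\varphi_t(x))),\, d\varphi_t^\top \eta\bigr) \in T^*_{(t,x)}(\R \times \mc{M})$. The standard submersive pullback bound then yields
\[
\WF(\Psi^* u) \subset \bigl\{(t, x, \eta(F(\varphi_t(x))), d\varphi_t^\top \eta) : (\varphi_t(x), \eta) \in \WF(u) \bigr\},
\]
and multiplication by the smooth compactly supported factor $\chi(t)$ does not enlarge this.

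Applying the pushforward wavefront estimate for $\pi$, which forces the $t$-component of the covector to vanish, I would conclude that $(x,\xi) \in \WF(Au)$ only if there exist $t \in \supp(\chi)$ and $\eta$ with $(\varphi_t(x), \eta) \in \WF(u)$, $\eta(F(\varphi_t(x))) = 0$, and $\xi = d\varphi_t^\top \eta$. To match the stated form, I would invoke the flow identity $d\varphi_t(F(x)) = F(\varphi_t(x))$ (from differentiating $\varphi_s \circ \varphi_t = \varphi_{s+t}$); dualizing gives $\eta(F(\varphi_t(x))) = \xi(F(x))$, and the pair $(\varphi_t(x), \eta)$ coincides with $\Phi_t(x,\xi)$ in the paper's convention for the symplectic lift. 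This delivers the claimed inclusion.

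The main point to watch is the duality convention for $\Phi_t$: it must be checked that the constraint $\xi = d\varphi_t^\top \eta$ with $\eta \in T^*_{\varphi_t(x)}\mc{M}$ corresponds precisely to $\Phi_t(x,\xi) = (\varphi_t(x), \eta)$. Once the conventions are aligned, the argument is a transparent application of the pullback--pushforward wavefront calculus, with no deeper dynamical input beyond the Hamiltonian lift relation $\xi(F(x)) = \eta(F(\varphi_t(x)))$.
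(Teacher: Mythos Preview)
The paper does not supply a proof of this lemma; it is quoted from \cite{Lefeuvre}. Your argument via $Au = \pi_*\bigl(\chi(t)\,\Psi^* u\bigr)$ and H\"ormander's pullback/pushforward calculus is the standard one and is correct, and it is precisely the approach used in the cited reference. Your caution about the convention for $\Phi_t$ is well-placed: the paper writes $\Phi_t(x,\xi) = (\varphi_t(x), d\varphi_t^\top \xi)$, which literally read is a type mismatch; interpreted as the canonical symplectic lift $(\varphi_t(x), (d\varphi_t)^{-\top}\xi)$, your constraint $\xi = d\varphi_t^\top \eta$ with $\eta \in T^*_{\varphi_t(x)}\mc{M}$ is exactly $\Phi_t(x,\xi) = (\varphi_t(x),\eta)$, and the flow-invariance identity $d\varphi_t F(x) = F(\varphi_t(x))$ transports the characteristic condition as you say. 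No gaps.
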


    The preceding descriptions of the wavefront set allow us to conclude

	\begin{lemma} \label{lemma:smoothing_middle} Let $\varphi_t \colon SM \to SM$ be a flow such that
    \begin{enumerate}[(i)]
        \item the flow $\varphi_t$ is topologically transitive,

        \item the stable (resp. unstable) bundle of the flow is transverse to the vertical bundle,
        \item the flow $\varphi_t$ has no conjugate points.
    \end{enumerate}
    Then letting $R \coloneq R_{+}^{\hol}(0)$ (resp. $R \coloneq R_{-}^{\hol}(0)$), the following operator is smoothing:
	\begin{equation}
        \pi_{m*}R\int_0^{\infty} \chi'(t) \varphi^*_{-t} \pi_m^*.
        \label{eq:smoothing-op}
    \end{equation}
        \label{lem:smoothing-term}
	\end{lemma}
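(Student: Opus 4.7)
The strategy is to show that $P := \pi_{m*} R T \pi_m^*$, with $T := \int_0^\infty \chi'(t) \varphi_{-t}^* dt$, is smoothing by proving $\WF'(P) \subset T^*(M \times M) \setminus 0$ is empty. The key structural input is that $\chi'$ is supported in a compact subset of $(0, \infty)$, so every wavefront contribution from $T$ involves a strictly positive flow time bounded away from zero.

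The first step is to track the wavefront contribution of each factor. By \eqref{eq:pullback-wf} and \eqref{eq:pushforward-wf}, the factors $\pi_m^*$ and $\pi_{m*}$ force any covector on $SM$ appearing in $\WF'(P)$ to be of the form $d\pi_0^\top \xi_0$ for some $\xi_0 \in T^*M$, hence to lie in $\V^{*,\perp} = \FF^* \oplus \H^*$. An argument along the lines of Lemma \ref{lem:integral-WF} applied to the Schwartz kernel of $T$ gives $\WF'(T) \subset \{((x, \xi), (y, \eta)) : (x, \xi) = \Phi_t(y, \eta), \, t \in \supp(\chi'), \, \eta(F(y)) = 0\}$. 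Finally, Lemma \ref{lem:res-WF} decomposes $\WF'(R)$ for $R = R^{\hol}_\pm(0)$ into three pieces: the diagonal, the (positive or negative) flowout, and $E_u^* \times E_s^*$ (or $E_s^* \times E_u^*$).

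Composing the three pieces of $\WF'(R)$ with $\WF'(T)$ and then with the wavefronts of $\pi_m^*$ and $\pi_{m*}$ yields three cases. In the first two (the diagonal or flowout piece of $R$ composed with the flowout of $T$), a hypothetical $((x_0, \xi_0), (y_0, \eta_0)) \in \WF'(P)$ would produce some $t' > 0$ together with lifts $u_0, u_0' \in SM$ satisfying $d\varphi_{t'}^\top(d\pi_0^\top \xi_0) = d\pi_0^\top \eta_0$ with both sides in $\FF^* \oplus \H^*$. This exhibits a nonzero element of $d\varphi_{t'}^\top(\FF^* \oplus \H^*) \cap (\FF^* \oplus \H^*)$, which by the dual no-conjugate-points hypothesis of Section \ref{sec:conjugate-points} can only lie in $\FF^*$. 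Since $d\pi_0$ maps $\H$ surjectively onto $TM$, the only $\eta_0 \in T^*M$ with $d\pi_0^\top \eta_0 \in \FF^*$ is $\eta_0 = 0$, contradicting our assumption. In the third case, we would have $d\pi_0^\top \xi_0 \in E_u^* \cap \V^{*,\perp}$ at $u_0$; the transversality of $E_u$ to $\V$ combined with the Anosov splitting forces $E_u^* \cap \V^{*,\perp} = \mathrm{ann}(E_u \oplus \FF \oplus \V) = \{0\}$, so $\xi_0 = 0$, again a contradiction.

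The main obstacle I anticipate is the careful wavefront-set bookkeeping in composing four distributions, particularly the auxiliary ``side'' wavefronts $\WF'_X$ of each factor and the verification that the three pieces of $\WF'(R)$ compose cleanly with $\WF'(T)$ in each case. These points are handled in the geodesic setting by \cite{Guillarmou17}; the argument transfers to the thermostat setting once the splitting $TSM = \FF \oplus \H \oplus \V$ (with $F$ now having nontrivial vertical component) replaces the geodesic one. A secondary subtlety is promoting the hypothesis ``$E_u$ transverse to $\V$'' to the stronger identity $E_u + \FF + \V = TSM$ required for the third case above; this follows by a dimension count using $E_u \cap \V = \{0\}$ and the Anosov transversality $E_u \cap \FF = \{0\}$, together with the specific form of $F$ in the $\FF \oplus \H \oplus \V$ splitting.
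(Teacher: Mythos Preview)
Your overall strategy—tracking wavefront sets through the four-factor composition using Lemmas~\ref{lem:res-WF}, \ref{lem:integral-WF} and the pullback/pushforward descriptions—is exactly the paper's approach. However, there are two genuine errors in the execution.

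\textbf{Wrong bundle in the third case.} For $R = R_+^{\hol}(0)$, the hypothesis (ii) of the lemma gives transversality of the \emph{stable} bundle $E_s$ to $\V$, not of $E_u$. Your argument in the third case works on the output side of the piece $E_u^* \times E_s^*$ and therefore requires $E_u + \FF + \V = TSM$, which would follow from $E_u \cap \V = \{0\}$—a hypothesis you do not have in this case. The fix is to argue on the input side instead, as the paper does: the covector entering $R$ through $T\pi_m^*$ lies in the flow-out of $\H^*$, and the relevant constraint from $\WF'(R)$ is membership in $E_s^*$. Since $E_s^*$ is flow-invariant, it suffices to show $E_s^* \cap \H^* = \{0\}$, which is exactly what the $E_s$-transversality hypothesis provides.

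\textbf{Misapplication of the no-conjugate-points hypothesis.} In cases 1--2 you claim the intersection $d\varphi_{t'}^\top(\FF^* \oplus \H^*) \cap (\FF^* \oplus \H^*)$ ``can only lie in $\FF^*$,'' and then argue that $d\pi_0$ maps $\H$ surjectively onto $TM$. Both assertions are incorrect: the dual no-conjugate-points condition of Section~\ref{sec:conjugate-points} says this intersection is $\{0\}$ for $t' \neq 0$, and $d\pi_0$ maps $\H$ isometrically onto $\{v\}^\perp$, not onto $T_xM$ (so $d\pi_0^\top(c v^\flat) \in \FF^*$ for any $c$). The repair is immediate once you use the hypothesis correctly: since the covector entering $T$ already satisfies $\eta(F) = 0$, it lies in $\H^* \subset \FF^* \oplus \H^*$; after flowing for time $t' \geq \eps/2 > 0$ and landing again in $\FF^* \oplus \H^*$ via $\pi_{m*}$, the no-conjugate-points hypothesis forces the covector to vanish, hence $\eta_0 = 0$ by injectivity of $d\pi_0^\top$.
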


    \begin{proof}

	We successfully apply each of the composed operators in \eqref{eq:smoothing-op} while keeping track of the wavefront set as done in \cite{Guillarmou17} for geodesic flows. To begin, let $u \in \mathcal{D}'(M, S^mT^*M)$ be a symmetric $m$-tensor distribution, and apply the leftmost operator $\pi^*_m \colon \mathcal{D}'(M, S^mT^*M) \to \mc{D}'(SM)$, recalling the wavefront set of the resulting distribution $\pi^*_m u \in C^{\infty}(SM)$ is controlled by \eqref{eq:pullback-wf}.

	In particular, since $\ker(d\pi_0) = \V$, we have $d\pi_0^\top \eta \in \FF^* \oplus \H^*$. So
	\begin{equation}
		\WF(\pi_m^* u) \subset \FF^* \oplus \H^*.
        \label{eq:wf-pi0-star}
	\end{equation}
	Next apply the integral component of the operator, using \cite[Excercise 6.2.9]{Lefeuvre}. Indeed, applying Lemma~\ref{lem:integral-WF} together with $\supp(\chi') \subset [\eps/2, \eps]$ and \eqref{eq:wf-pi0-star} yields
	\begin{align*}
		&\WF\l(\int_{0}^{\infty} \chi'(t) \varphi^*_{-t} dt \pi_m^* h\r)\\
		&\subset \{(x,\xi) \in T^*SM : \xi(F(x)) = 0 \text{ and } \exists t \in \supp(\chi') \text{ s.t. } \Phi_{-t}(x,\xi) \in \WF(\pi_{m}^{*}u)\}\\
		&\subset \{\Phi_{t}(x,\xi): t \in [\eps/2, \eps], (x, \xi) \in \H^*\}.
	\end{align*}
    where $\Phi_t$ is the Hamiltonian lift of the flow and in the final step we used that $(x,\xi) \in \FF^* \oplus \H^*$ and $\xi(F(x)) = 0$ implies $(x,\xi) \in \H^*$.
	Next we will apply the holomorphic part of the resolvent $R_{+}^{\hol}(0)$ with the case of $R_{-}^{\hol}(0)$ being similar. Importantly, it is a result of Dyatlov and Zworski \cite[Prop 3.3]{2016-dyatlov-zworski} that allows us to control the wavefront set of $R_{+}^{\hol}(0)v$ by the wavefront of the distribution $v$. In particular, Corollary~\ref{lem:WF-of-Ru} promises
	\begin{align*}
		      &\WF\l(R^{\hol}_+(0)v\r)
			\subset \{(x,-\xi): (x, \xi) \in \WF(v)\}\\ 
			&\cup \{(x,\xi) \in T^*SM \sm \{0\} : \exists (y, -\eta) \in \WF(v) \text{ s.t. } \eta(F(x)) = 0, (x,\xi) = \Phi_t(y,\eta)\}\\ 
			&\cup \{(x,\xi) \in T^*SM \sm \{0\} : \exists (y,-\eta) \in \WF(v) \text{ s.t. } (x,\xi,y,-\eta) \in E_u^* \times E_s^*\}.
	\end{align*}
    We would like to apply this for $v = \int_{0}^{\infty} \chi'(t) \varphi^*_{-t} dt \pi_m^* u$ with $\WF(v) \subset \{\Phi_t(x,\xi): t \in [\eps/2, \eps], (x, \xi) \in \H^*\}$. Consequently, all $(x, \xi) \in \WF(v)$ can be written as $\Phi_t(x,\eta)$ for $\eta(F(x)) = 0$ and therefore the first set in the above union can be absorbed into the second set in the above union. Furthermore, by the transversality assumption we have $(E_s \oplus \FF) \cap \V = \{0\}$, implying $E_s^* \cap \H^* = \{0\}$. But then by invariance of $E_s^*$, we find that for $(x,\xi) \in \H^*$, we have $\Phi_t(x,\xi) \notin E_s^*$ for all $t \in \R$ and therefore the last set in the above union is empty. Therefore, evaluating the second set in the union with $\WF(v) \subset \{\Phi_{t}(x,\xi): t \in [\eps/2, \eps], (x, \xi) \in \H^*\}$ as found earlier, we find

    \begin{align*}
		&\WF\l(R^{\hol}_+(0)\int_{0}^{\infty} \chi'(t) \varphi^*_{-t} dt \pi_m^* u\r)\\
		&\subset \{(x,\xi) \in T^*SM \sm 0: \Phi_t(x,\xi) : t \geq \eps/2, (x,\xi) \in \H^*\}.
	\end{align*}
	Finally, we aim to show
	\begin{equation*}
		\WF\l(\pi_{m*}R^{\hol}_+(0)\int_{0}^{\infty} \chi'(t) \varphi^*_{-t} dt \pi_m^* u\r) = \emptyset.
	\end{equation*}

	By \eqref{eq:pushforward-wf}, it suffices to have $\Phi_t(x,\xi) \notin \FF^* \oplus \H^*$ for $t \geq \eps/2$ if $(x,\xi) \in \H^*$, which is exactly the condition that the flow $\varphi_t$ has no conjugate points. 
    \end{proof}

\begin{lemma} 
    For a thermostat flow $\varphi_t$, the operator
    \begin{equation*}
        N \defeq \int_{S_xM} \int_{-\eps}^{\eps} \varphi_t^* \pi_0^* dt dS_x
    \end{equation*}
    is an elliptic pseudodifferential operator of order $-1$. 
    \label{thm:thermostat-psi-do}
\end{lemma}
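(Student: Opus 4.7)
The plan is to compute the Schwartz kernel of $N$ and recognize its $d(x,y)^{-(n-1)}$ conormal singularity along the diagonal as that of a classical elliptic $\Psi^{-1}$ operator. Unfolding the definition, for $f \in C^\infty(M)$,
\[
(Nf)(x) = \int_{-\eps}^{\eps} \int_{S_xM} f\bigl(\gamma_{x,v}(t)\bigr)\, dS_x(v)\, dt,
\]
where $\gamma_{x,v}$ is the thermostat trajectory starting at $(x, v)$.

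First I would change variables from $(v, t) \in S_xM \times ((-\eps, \eps)\setminus\{0\})$ to $y := \gamma_{x, v}(t) \in M$, handling $t > 0$ and $t < 0$ separately so each half is a diffeomorphism onto a punctured neighborhood of $x$. In Riemannian normal coordinates centered at $x$, the thermostat ODE $D_t \dot\gamma = Y(\gamma, \dot\gamma)$ gives the Taylor expansion $\gamma_{x,v}(t) = tv + \tfrac{t^2}{2}Y(x, v) + O(t^3)$, so writing $y = r\omega$ in polar coordinates on $T_xM$ yields $r = |t|(1 + O(t))$ and $\omega = \sgn(t)\, v + O(|t|)$. A direct computation then shows that the Jacobian of $(v, t) \mapsto y$ is $|t|^{n-1}(1 + O(|t|))$. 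Applying the change of variables formula gives
\[
(Nf)(x) = \int_M K(x, y)\, f(y)\, dy, \qquad K(x, y) = \frac{a(x, \omega) + O(d(x,y))}{d(x, y)^{n-1}},
\]
where the smooth function $a(x, \omega)$ on $SM$ is strictly positive, obtained by summing the Jacobian contributions from $t > 0$ and $t < 0$.

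A kernel of this form defines a classical pseudodifferential operator of order $-1$ whose principal symbol is given by the fiberwise Fourier transform of $a(x, \omega)\, |y-x|^{-(n-1)}$ on $T_xM \cong \R^n$. This evaluates to a multiple of $|\xi|^{-1}$ times the integral of $a(x, \omega)$ over the great sphere $S^{n-1} \cap \xi^{\perp}$; strict positivity of $a$ then gives a nonvanishing principal symbol and hence ellipticity.

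The only technical subtlety is the degeneracy of the change of variables at $t = 0$, which is standard and handled by the passage to polar coordinates: the Jacobian vanishes at precisely the rate $r^{n-1}$ matching the Riemannian volume form on $M$, producing the stated $r^{-(n-1)}$ singularity. Since the thermostat trajectory agrees with the geodesic to first order in $t$, the leading-order analysis mirrors Guillarmou's geodesic computation, and the $O(t^2)$ correction from the force $Y$ perturbs only the lower-order pieces of the symbol.
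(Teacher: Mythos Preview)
Your proposal is correct and follows essentially the same route as the paper: both perform the change of variables from $(t, v)$ to polar coordinates $(r, \omega)$ around $x$ and then recognize the resulting operator as an elliptic classical $\Psi^{-1}$DO through the leading homogeneous singularity of its kernel. The only difference is that the paper packages the final step by invoking \cite[Lemma~B.1]{DPSU07}, which directly gives the principal-symbol formula $2\pi \int J^{-1}(x,0,\omega)\,\delta(\omega\cdot\xi)\, d\omega$ for integrals written in this polar form, whereas you sketch that lemma's content via the fiberwise Fourier transform of the homogeneous leading term of the kernel.
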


\begin{proof}
    This follows from a change of coordinates together with \cite[Lemma B.1]{DPSU07} as done in \cite{FSU08}. Indeed, denote $\gamma_{x,v}(t) \defeq \pi_0 \circ \varphi_t(x,v)$ the curve generated by the flow with initial position $x$ and initial velocity $v$ and consider the change of variables $(t,v) \mapsto \gamma_{x,v}(t)$. Next, use the flow to obtain exponential chart $T_xM \to M$, which we denote $\xi \mapsto \exp_x(\xi)$, i.e., $\exp_{x}(\xi)$ is just the projection onto the manifold of the flow. In these exponential coordinates, using the metric $g$ to define the norm on $T_xM$, we can convert to spherical coordinates $(r,\omega)$. This process gives the change of variables $(t,v) \mapsto (r,\omega)$ and we denote the Jacobian by
    \begin{equation*}
        J = \det\frac{\partial(r,\omega)}{\partial(t,v)}
    \end{equation*}
    After identifying $f$ with its pullback under this coordinate change then extending $f$ by $0$, our operator becomes
    \begin{equation}
        Nf = \int_{S^n} \int_0^{R(\omega)} J^{-1}(x,r,\omega) f(x+r\omega) dr d\omega.
        \label{eq:normal-op-expression}
    \end{equation}
    Therefore, by \cite[Lemma B.1]{DPSU07} the normal operator \eqref{eq:normal-op-expression} is a classical pseudo-differential operator of order $-1$ with principal symbol
    \begin{equation}
        a_0(x,\xi) = 2\pi \int_{S_x U} J^{-1}(x,0,\omega) \delta(\omega \cdot \xi) d\sigma_x(\omega),
    \end{equation}
    where $U$ is an open set. Finally, a direct computation gives $J(x,0,\omega) = 1$, confirming that $N$ is an elliptic pseudodifferential operator.

\end{proof}

    \begin{proof}[Proof of Theorem~\ref{thm:main-thermostat-thm}]
    Let $\varphi_t \colon SM \to SM$ denote the thermostat flow over the unit tangent bundle of a manifold $M$. We then must compute the normal operator $\Pi_0 = -\pi_{0*}(R^{\hol}_+(0) + R^{\hol}_-(0)+ 1 \otimes 1)\pi_0^*$. By Lemma~\ref{lem:resolvent-comp} we have
    \begin{equation}
        R^{\hol}_{\pm}(0) = \int_0^{\infty} \chi(t) \varphi^*_{\mp t}dt + R_{\pm}^{\hol}(0)\int_0^{\infty} \chi'(t) \varphi^*_{\mp t}dt + \text{smoothing}.
        \label{eq:resolvent-decomp}
    \end{equation}
    Indeed, the last term given in \eqref{eq:hol-res-expression} is smoothing because the spectral projections $\Pi_0^{\pm}$ simply integrate over the SRB measures. In addition, by Lemma~\ref{lem:smoothing-term}, the operator
    \begin{equation*}
        \pi_{m*}R_{\pm}^{\hol}(0)\int_0^{\infty} \chi'(t) \varphi^*_{\mp t}dt\pi_m^*
    \end{equation*}
    is smoothing, which is precisely the second term of \eqref{eq:resolvent-decomp} after pre-composing and post-composing by $\pi_m^*$ and $\pi_{m*}$ respectively. Therefore we may write the normal operator $\Pi_0$ as follows:
    \begin{align*}
        \Pi_0 &= \pi_0^* (R_+^{\hol}(0) + R_-^{\hol}(0)) \pi_0^*  +  \text{smoothing}\\
        &= \pi_0^* \left( \int_0^{\infty} \chi(t) \varphi^*_{-t}dt + \int_0^{\infty} \chi(t) \varphi^*_{t}dt \right) \pi_{0*} + \text{smoothing}\\
        &= \pi_{0*} \int_{-\infty}^{\infty} \chi(t) \varphi_{t}^* \pi_0^*  dt + \text{smoothing}.
    \end{align*}
    We hope to show $\Pi_0$ is a pseudodifferential operator of order $-1$, so have to study the singularities of the Schwartz kernel $K$ of $\Pi_0$. Indeed, the Schwartz kernel $K(x, \cdot)$ for some fixed $x \in M$ is reflected in the operator operator $f \mapsto \Pi_0 f (x)$ for some $x \in M$ which, up to smoothing, is simply
    \begin{equation*}
    \int_{S_xM}\int_{-\eps}^{\eps} \varphi_t^* \pi_0^* dt dS_x.
    \label{eq:anosov-normal-op}
    \end{equation*}
    That is, by taking $\eps$ small enough, the analysis of the singularities of the Schwartz kernel of $\Pi_0$ at each $x \in M$ is equivalent to the analysis of the corresponding normal operator
    \begin{equation}
    \int_{S_xM'}\int \varphi_t^* \pi_0^* dt dS_x: C^{\infty}(SM') \to C^{\infty}(SM')
    \label{eq:normal}
    \end{equation}
    over $M' := \{\gamma_{x,v}(t) : t \in [-\eps, \eps], v \in S_xM\}$. By Lemma~\ref{thm:thermostat-psi-do}, this operator is an elliptic pseudodifferential operator of order $-1$.
    \end{proof}

\section{Magnetic Potential-Solenoidal Decomposition}
\label{sec:mag_ps}

In this section we work with magnetic flows, introduced in Section \ref{sec:thermostats}. In order to help us to understand the magnetic ray transform and the magnetic normal operator, we are interested in generalizing the potential-solenoidal decomposition available in the Riemannian setting. To this end, first following \cite{OPS25}, one can extend the Lorentz force to tensors as follows:
\begin{equation} \label{eq:Lorentz_tensors}
    Y(T)_x (v_1, \ldots, v_m )=\frac{1}{m} (T_x (Y (v_1 ), v_2, \ldots, v_m )+\cdots+T_x (v_1, \ldots, Y (v_m ))),
\end{equation}
where $T \in C^{\infty} (S^{m} (T^* M))$, $v_1, \ldots, v_m \in T_x M$, and $Yf=f$ for any $f \in C^{\infty}(M)$. Following \cites{DPSU07, OPS25}, we define the \emph{magnetic potential} by
\begin{align*}
    D_{\mu} \colon C^{\infty}(S^m (T^* M )) \times C^{\infty}(S^{m-1}(T^* M )) &\to C^{\infty}(S^{m+1} (T^* M )) \times C^{\infty}(S^{m}(T^* M )), \\
    [\xi,\eta] & \mapsto [D \xi+(m-1) Y(\eta) \otimes g,D \eta+m Y(\xi)],
\end{align*}
where $D$ is the symmetrized covariant derivative given by 
    \[ D:=\mathcal{S} \circ \nabla \colon C^{\infty} (M, S^{m} (T^{*}M)) \to C^{\infty}(M, S^{m+1} (T^{*} M)). \]
Here, as usual, $\nabla$ denotes the Levi-Civita connection, and $\mathcal{S}$ the symmetrization. In this way, $D$ maps symmetric tensors of order $m$ to symmetric tensors of order $m+1$. We also write $D_{\mu}$ as the following matrix valued differential operator:
    \begin{equation} \label{eq:matrix_mag_div}
        D_{\mu}=\begin{pmatrix}
        D & (m-1)Y(\bullet ) \otimes g \\ mY & D
    \end{pmatrix}.
    \end{equation}

For $[p,q],[r,s] \in C^{\infty}(S^m (T^* M )) \times C^{\infty}(S^{m-1}(T^* M ))$, we define its $\mathbf{L}^{2}$ product by
\[  \langle [p,q],[r,s] \rangle_{\mathbf{L}^{2}}=\langle p,r \rangle_{L^{2}(M,T^{*}M^{\otimes m},\vol_{g})}+\langle q,s \rangle_{L^{2}(M,T^{*}M^{\otimes (m-1)},\vol_{g})}. \]
We recall that the space $L^{2}(M,T^{*}M^{\otimes m},\vol_{g})$ is endowed with the usual $L^{2}$ product on tensors induced by the Riemannian metric, see \cite{Lefeuvre}*{Section 14.1}. We denote by $\mathbf{L}^{2}$ the space of pairs of symmetric tensors with finite $\mathbf{L}^{2}$-norm. In a similar fashion, we will write $\mathbf{H}^{s}$ to denote the space of pairs of symmetric tensors with finite $\mathbf{H}^{s}$-norm, where the norm is defined in the obvious way. We omit the order of the pair in order to simplify the notation, but we mention it when it is necessary to avoid confusions. 

Our first task will be the study of the operator $D_{\mu}$. As we will show later, this will be useful to understand the kernel of $I_{m}$, and the structure of pairs of symmetric tensors.

\begin{lemma} \label{lemma:adjoint}
    The formal adjoint of $D_{\mu}$ with respect to the $\mathbf{L}^2$ product is 
    \[ D_{\mu}^{*}\colon  C^{\infty}(S^{m} (T^* M )) \times C^{\infty}(S^{m-1}(T^* M )) \to C^{\infty}(S^{m-1} (T^* M )) \times C^{\infty}(S^{m-2}(T^* M )), \]
    given by
    \[ D_{\mu}^{*}=\begin{pmatrix}
        -\tr(\nabla \bullet) & -(m-1)Y \\
         -m\tr Y & -\tr(\nabla \bullet)
    \end{pmatrix}. \]
    We call $D_{\mu}^{*}$ the \emph{magnetic divergence}.
\end{lemma}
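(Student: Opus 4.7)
The proof is a direct computation of the formal $\mathbf{L}^2$-adjoint by integration by parts. I would begin by expanding
\[
\langle D_\mu[\xi, \eta], [r, s]\rangle_{\mathbf{L}^2} = \langle D\xi, r\rangle_{L^2} + c_1 \langle Y(\eta) \otimes g, r\rangle_{L^2} + \langle D\eta, s\rangle_{L^2} + c_2 \langle Y(\xi), s\rangle_{L^2},
\]
where $c_1, c_2$ are the scalar coefficients read off from \eqref{eq:matrix_mag_div} at the order appropriate for $D_\mu \colon C^\infty(S^{m-1}(T^*M)) \times C^\infty(S^{m-2}(T^*M)) \to C^\infty(S^{m}(T^*M)) \times C^\infty(S^{m-1}(T^*M))$, so that its adjoint has the domain and codomain claimed in the statement. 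The goal is then to rewrite the right hand side as $\langle [\xi, \eta], D_\mu^*[r, s]\rangle_{\mathbf{L}^2}$ and simply read off the matrix entries of $D_\mu^*$.

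The two diagonal pieces $\langle D\xi, r\rangle$ and $\langle D\eta, s\rangle$ are handled by the classical identity, valid on any closed Riemannian manifold, that the formal $L^2$-adjoint of the symmetrized covariant derivative $D = \mathcal{S}\circ \nabla$ on symmetric tensors is the negative divergence $-\tr(\nabla\bullet)$. This is a consequence of Stokes' theorem together with $\mathcal{S}$ being an $L^2$-orthogonal projection, and directly produces the two diagonal entries of $D_\mu^*$.

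For the off-diagonal terms I would use two pointwise algebraic identities for the extension \eqref{eq:Lorentz_tensors} of $Y$ to symmetric tensors. First, $Y$ is pointwise skew: $\langle Y(T), S\rangle = -\langle T, Y(S)\rangle$. This follows by expanding in an orthonormal frame and noting that $g(Y e_i, e_j) = \Omega(e_i, e_j)$ is antisymmetric while both $T$ and $S$ are symmetric, so the symmetrized contributions pair up and cancel. This handles $\langle Y(\xi), s\rangle$ and yields the off-diagonal $-(m-1)Y$ entry. Second, the formal adjoint of the symmetric tensor product $A \mapsto A \otimes g$ (with the standard normalization) is the metric trace $\tr_g$; combining this with skew-adjointness of $Y$ gives $\langle Y(\eta) \otimes g, r\rangle = -\langle \eta, Y(\tr_g r)\rangle$. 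To match the $-m\tr Y$ entry I would use the commutation identity $\tr_g \circ Y = \tfrac{m-2}{m}\, Y \circ \tr_g$ on symmetric $m$-tensors, which is again a direct orthonormal-frame computation: the two ``crossed'' contributions in $\tr_g Y(T)$ contract $\Omega$ against a symmetric pair of indices and vanish, leaving precisely the combinatorial factor $(m-2)/m$.

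The main obstacle is purely bookkeeping: one must carefully track the symmetrization convention used in $Y(\eta)\otimes g$ and the shift in $m$ between the domain of $D_\mu$ and the codomain of $D_\mu^*$, so that the coefficients $(m-1)$ and $m$ in the adjoint emerge correctly. The substantive algebraic content in the cross terms is the commutation relation between $\tr_g$ and $Y$ coming from the antisymmetry of $\Omega$; everything else is Stokes' theorem and symmetrization combinatorics.
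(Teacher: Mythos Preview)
Your proposal is correct and follows essentially the same approach as the paper: expand the $\mathbf{L}^2$ pairing, use $D^*=-\tr(\nabla\,\bullet\,)$ for the diagonal entries, and use the pointwise skew-adjointness of $Y$ (via an orthonormal frame computation) for the off-diagonal ones. The only cosmetic difference is in the $Y(\eta)\otimes g$ term: the paper invokes the identity $(m-2)\,Y(\eta)\otimes g = m\,Y(\eta\otimes g)$ from \cite{OPS25} and then takes adjoints, whereas you take adjoints first and then use the dual identity $\tr_g\circ Y=\tfrac{m-2}{m}\,Y\circ\tr_g$; these are equivalent rearrangements of the same computation.
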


\begin{proof}
Let $[p,q],[r,s] \in C^{\infty}(S^m (T^* M )) \times C^{\infty}(S^{m-1}(T^* M ))$, where $[r,s]=D_{\mu}[\xi,\eta]^{\top}$. Then,
\begin{align*}
    \langle [p,q],[r,s]\rangle _{\mathbf{L}^{2}} &=\int_{M} \{ (p,D\xi)+(m-2)(p,Y(\eta) \otimes g) +(q,D\eta)+(m-1)(q,Y(\xi) ) \} \\
    &=\int_{M} \{(D^{*}p,\xi)+(m-2)(p,Y(\eta) \otimes g) +(D^{*}q,\eta) +(m-1) (q,Y(\xi))\},
\end{align*}
where we used the adjoint of $D$ is $D^{*}:=-\tr(\nabla \bullet)$, see for instance \cite{Lefeuvre}*{Lemma 14.1.8}. To deal with the other terms, let $\{e^{j}\}_{j=1}^{n}$ be an orthonormal frame on $TM$. Then,  using \eqref{eq:Lorentz_tensors}
\begin{align*}
    Y(\xi)_{j_{1} \ldots j_{m-1}}&=Y(\xi)(e_{j_{1}},\ldots,e_{j_{m-1}}) \\
    &=\frac{1}{m-1}(\xi_x (Y (e_{j_{1}} ), e_{j_{2}}, \ldots, e_{j_{m-1}} )+\cdots+\xi_x (e_{j_{1}}, \ldots, Y (e_{j_{m-1}} ))) \\
    &=\frac{1}{m-1}\sum_{\ell=1}^{m-1}Y_{j_{\ell}}^{k}\xi_{j_{1}\ldots k \ldots j_{m-1}}.
\end{align*}
On the other hand, since $Y$ is antisymmetric, we have
\begin{align}
    \int_{M}& g^{i_{1}j_{1}} \cdots g^{i_{m-1}j_{m-1}} q_{i_{1}\ldots i_{m-1}}  Y^{k}_{j_{1}}\xi_{kj_{2}\ldots j_{m-1}} \\
    &=-\int_{M} g^{i_{1}j_{1}} \cdots g^{i_{m-1}j_{m-1}} Y_{i_{1}}^{k}q_{k\ldots i_{m-1}}  \xi_{j_{1}j_{2}\ldots j_{m-1}} .
\end{align}
Thus, 
\begin{align*}
    \int_{M} ( q,Y(\xi) ) =&\int_{M} g^{i_{1}j_{1}} \cdots g^{i_{m-1}j_{m-1}} q_{i_{1}\ldots i_{m-1}}Y(\xi)_{j_{1} \ldots j_{m-1}}  \\
    =&\frac{1}{m-1}\int_{M} g^{i_{1}j_{1}} \cdots g^{i_{m-1}j_{m-1}} q_{i_{1}\ldots i_{m-1}} \times \\
    &\sum_{\ell=1}^{m-1}Y_{j_{\ell}}^{k}\xi_{j_{1}\ldots k \ldots j_{m-1}} \\
    =&\int_{M} g^{i_{1}j_{1}} \cdots g^{i_{m-1}j_{m-1}} (-Y(q))_{i_{1} \ldots i_{m-1}} \xi_{j_{1}\ldots j_{m-1}}  \\
    =&\int_{M} (-Y(q),\xi ).
\end{align*}
From this follows that the adjoint of $Y$ (acting on $(m-1)$-tensors) is $-Y$. Finally, note that if $m=2$, we have that $(m-2)Y(\eta)\otimes g=0$, while if $m \neq 2$ using $Y^{*}=-Y$ and the fact that $(m-2)Y(\eta)\otimes g=mY(\eta \otimes g)$ (which follows from \cite{OPS25}*{Equation (3.18)} with $k=1$ and $n=0$), we see that
\begin{align*}
    \int_{M} (m-2)(p,Y(\eta) \otimes g)  &=\int_{M} ( mp,Y(n \otimes g) ) \\
    &=\int_{M} ( -mY(p),\eta \otimes g ) \\
    &=\int_{M} (-m\tr Y(p),\eta ). 
\end{align*}
Therefore, 
\[ \langle [p,q],[r,s] \rangle_{\mathbf{L}^{2}}=\int_{M} \{ ( D^{*}p+(1-m)Y(q),\xi )+( D^{*}q-m\tr Y(p),\eta) \}d\vol, \]
which proves the claim.    
\end{proof}

\begin{rmk} \label{rmk:div_m=2}
    In the particular case where $D_{\mu}^{*}$ acts on pairs $[p,q]$ where $p$ is a 2-tensor and $q$ a 1-form, from the proof we conclude that 
    \[ D_{\mu}^{*}=\begin{pmatrix}
        -\tr(\nabla \bullet) & -Y \\ 0 & -\tr(\nabla \bullet)
    \end{pmatrix},\]
    which coincides up to a factor on the top-right entry with the adjoint operator from \cite{DPSU07}*{Equation (3.23)}. The factor that appears there is added since was useful to obtain some bounds in that previous work.
\end{rmk}

\begin{defin}
    The pair $[p, q] \in C^{\infty}(S^m (T^* M )) \times C^{\infty}(S^{m-1}(T^* M ))$ is called \emph{potential} if $[p,q]=D_{\mu}[\xi,\eta]^{\top}$ for some $[\xi,\eta] \in C^{\infty}(S^{m-1} (T^* M )) \times C^{\infty}(S^{m-2}(T^* M ))$.
\end{defin}

Next, we obtain an identity that helps us to relate $F$ with $D_{\mu}$. In this way, we are able relate potential tensors with the kernel of the ray transform.

\begin{lemma} \label{lemma:commutating}
    $F [\pi_{m}^{*},\pi_{m-1}^{*}]=[\pi_{m+1}^{*},\pi_{m}^{*}]D_{\mu}$. In particular, $\mathrm{ran}D_{\mu} \subset \ker I_{m}$.
\end{lemma}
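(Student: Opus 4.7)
The plan is to decompose the magnetic generator as $F = X + V_Y$, where $X$ is the geodesic spray (the generator of the geodesic flow) and $V_Y$ is the vertical vector field whose value at $(x,v)\in SM$ corresponds, under the splitting $TSM = \widetilde{\mathbb{H}}\oplus \mathbb{V}$, to $(0,Y(x,v))$. Since $Y(x,v)\perp v$, the perturbation $v\mapsto v+tY(x,v)$ preserves $|v|_g=1$ to first order, so $V_Y$ acts on any smooth $h\in C^\infty(SM)$ by $V_Y h(x,v) = \tfrac{d}{dt}\big|_{t=0} h(x,v+tY(x,v))$. This will reduce the statement to two separate identities, one for each column of the matrix $D_\mu$.

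Next, I would verify the two column identities. For $\xi\in C^\infty(S^m(T^*M))$, the classical commutator formula for the geodesic flow (see, e.g., \cite{Lefeuvre}*{Lemma 14.1.7}) gives $X\pi_m^*\xi = \pi_{m+1}^* D\xi$. A direct calculation with the definition of $\pi_m^*$ then yields
\[
V_Y(\pi_m^*\xi)(x,v) = \sum_{j=1}^m \xi_x(v,\ldots,Y(x,v),\ldots,v) = m\,\xi_x(Y(v),v,\ldots,v),
\]
which by \eqref{eq:Lorentz_tensors} and symmetry equals $m\,\pi_m^*Y(\xi)(x,v)$. Adding gives the first column relation $F\pi_m^*\xi = \pi_{m+1}^* D\xi + m\,\pi_m^* Y(\xi)$. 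For $\eta\in C^\infty(S^{m-1}(T^*M))$, the same two computations yield $X\pi_{m-1}^*\eta = \pi_m^* D\eta$ and $V_Y(\pi_{m-1}^*\eta)(x,v) = (m-1)\,\eta_x(Y(v),v,\ldots,v)$.

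The only nontrivial point is reconciling the off-diagonal entry $(m-1)Y(\eta)\otimes g$ with the expected $V_Y(\pi_{m-1}^*\eta)$: by evaluating on $m+1$ copies of $v$ we have
\[
\pi_{m+1}^*\bigl(Y(\eta)\otimes g\bigr)(x,v) = Y(\eta)_x(v,\ldots,v)\,g(v,v) = \eta_x(Y(v),v,\ldots,v),
\]
where the last step uses $|v|_g=1$ on $SM$ together with \eqref{eq:Lorentz_tensors}. Hence $(m-1)\,\pi_{m+1}^*(Y(\eta)\otimes g) = V_Y\pi_{m-1}^*\eta$, which combined with $X\pi_{m-1}^*\eta = \pi_m^*D\eta$ yields the second column relation $F\pi_{m-1}^*\eta = \pi_m^*D\eta + (m-1)\pi_{m+1}^*(Y(\eta)\otimes g)$. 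Assembling both columns according to the matrix form \eqref{eq:matrix_mag_div} produces the claimed identity.

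Finally, for the inclusion $\mathrm{ran}\,D_\mu \subset \ker I_m$, I would apply the identity with the indices shifted by one: for any $[\xi,\eta]\in C^\infty(S^{m-1}(T^*M))\times C^\infty(S^{m-2}(T^*M))$, we obtain $[\pi_m^*,\pi_{m-1}^*]D_\mu[\xi,\eta]^\top = F(\pi_{m-1}^*\xi + \pi_{m-2}^*\eta)$, which lies in the range of $F$. Integrating a total derivative along any closed orbit of $\varphi_t$ vanishes, so $I\circ F = 0$ and hence $I_m\,D_\mu[\xi,\eta] = 0$. The main obstacle is purely notational — keeping track of tensor orders and the placement of the factor $|v|_g=1$ — rather than analytic; the computations above are all pointwise and require no extra hypotheses.
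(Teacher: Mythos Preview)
Your proof is correct. The paper does not supply its own argument for this lemma---it simply cites \cite{OPS25}*{Lemma 5.1} and omits the details---so there is nothing to compare against here; your direct computation via the decomposition $F=X+V_Y$ and the pointwise identities $X\pi_m^*=\pi_{m+1}^*D$ and $V_Y\pi_m^*=m\,\pi_m^*Y$ is exactly the expected verification, and your handling of the off-diagonal term $(m-1)Y(\eta)\otimes g$ using $|v|_g=1$ is fine.
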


We recall that from the introduction $I_{m}[p,q]=I(\pi_{m}^{*}p+\pi_{m-1}^{*}q)$. The proof is essentially the same as in \cite{OPS25}*{Lemma 5.1}, so we omit it. The case of equality on Lemma \ref{lemma:commutating} is of particular interest.

\begin{defin}
    We say that $I_{m}$ is \emph{solenoidal injective} (abbreviated as \emph{s-injective}) if $\mathrm{ran}D_{\mu} = \ker I_{m}$.
\end{defin}

Being s-injective means that the whole kernel of the ray transform are the ones given by Lemma \ref{lemma:commutating}, i.e., pairs of tensors that can be written as potentials. The name \emph{solenoidal} will have more sense (and is explained) after Theorem \ref{thm:ps_decomposition}.

Now we show that $D_{\mu}$ is an elliptic matrix valued operator, and characterize its kernel. We recall a map that will be useful for this purpose:
\begin{align*}
    \mathcal{J} \colon S^{j}(T^{*}M) &\to S^{j+2}(T^{*}M), \\
    u & \mapsto \mathcal{S}(g\otimes u).
\end{align*}

\begin{lemma} \label{lemma:ellipticity}
    The operator 
    \[ 
    \begin{split}
        D_{\mu} \colon &C^{\infty}(M,S^{m}(T^{*}M)) \times C^{\infty}(M,S^{m-1}(T^{*}M)) \\
        &\to C^{\infty}(M,S^{m+1}(T^{*}M)) \times C^{\infty}(M,S^{m}(T^{*}M)),
    \end{split}
    \]
    is elliptic. Furthermore, for $[p,q] \in \ker D_{\mu}$, we have
    \begin{itemize}
        \item if $m$ is odd, then $p=0$ and $q=J^{(m-1)/2}a$ where $a$ is constant;
        \item if $m$ is even, then $q=0$ and $p=J^{m/2}a$, where $a$ is constant.
    \end{itemize}
\end{lemma}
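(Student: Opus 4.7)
The plan is to first establish ellipticity by a short principal symbol computation, and then to characterize $\ker D_{\mu}$ by translating the system $D_{\mu}[p,q] = 0$ into the scalar flow-invariance equation $Fu = 0$ on $SM$ via Lemma~\ref{lemma:commutating}. Topological transitivity of the magnetic flow forces $u$ to be constant, after which a short parity argument extracts the claimed form of $[p,q]$.

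For ellipticity, I would compute the principal symbol of $D_{\mu}$. The off-diagonal entries of \eqref{eq:matrix_mag_div} are the zeroth-order algebraic operations $(m-1)Y(\bullet) \otimes g$ and $mY$, so they do not contribute to the principal symbol. The diagonal entries are two copies of the symmetrized covariant derivative $D$, whose principal symbol is the injective map $u \mapsto i\,\mathcal{S}(\xi \otimes u)$ for $\xi \neq 0$ (see e.g., \cite{Lefeuvre}*{Chapter 14}). Consequently $\sigma_{D_{\mu}}(x,\xi)[p,q] = i\bigl[\mathcal{S}(\xi \otimes p),\,\mathcal{S}(\xi \otimes q)\bigr]$ is injective whenever $\xi \neq 0$, so $D_{\mu}$ is overdetermined elliptic.

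For the kernel characterization, I would take $[p,q] \in \ker D_{\mu}$ and set $u \defeq \pi_{m}^{*}p + \pi_{m-1}^{*}q \in C^{\infty}(SM)$. Lemma~\ref{lemma:commutating} immediately gives $Fu = 0$, and since the Anosov magnetic flow on the connected manifold $SM$ is topologically transitive (as recalled in the introduction), the continuous flow-invariant function $u$ must be constant; denote this constant by $c$. To then extract $[p,q]$, I would apply the antipodal involution $\iota \colon SM \to SM$, $\iota(x,v) = (x,-v)$, using that $(\pi_{k}^{*}h) \circ \iota = (-1)^{k}\pi_{k}^{*}h$, to obtain
\[
    (-1)^{m}\pi_{m}^{*}p + (-1)^{m-1}\pi_{m-1}^{*}q = \iota^{*}u = c = \pi_{m}^{*}p + \pi_{m-1}^{*}q.
\]
Solving this $2\times 2$ linear system gives $\pi_{m-1}^{*}q = 0$ and $\pi_{m}^{*}p = c$ when $m$ is even, and $\pi_{m}^{*}p = 0$ and $\pi_{m-1}^{*}q = c$ when $m$ is odd. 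The injectivity of $\pi_{k}^{*}$ (by polarization combined with homogeneity in $v$) disposes of the vanishing piece. For the surviving piece $\pi_{k}^{*}h \equiv c$ with $k$ even, homogeneity in $v$ upgrades the identity to $h_{x}(v,\ldots,v) = c\,g_{x}(v,v)^{k/2}$ on all of $T_{x}M$, and polarization identifies $h = c \cdot g^{k/2} = J^{k/2}(c)$ with $c$ a constant, matching the statement of the lemma.

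The only step that is not purely formal is the passage $Fu = 0 \Rightarrow u \equiv \mathrm{const}$, which genuinely requires topological transitivity of the magnetic flow: indeed, in the $Y = 0$ case the operator $D_{\mu}$ decouples into two copies of $D$ and its kernel would contain any Killing tensor (e.g. a non-trivial Killing $1$-form on a flat torus for $m=1$), so some dynamical input beyond the algebraic structure of $D_{\mu}$ is necessary. The symbol computation and the parity/polarization arguments are routine by comparison.
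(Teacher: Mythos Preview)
Your proposal is correct and follows essentially the same route as the paper: the principal symbol of $D_\mu$ is the diagonal $i\,\mathrm{diag}(j_\xi^m,j_\xi^{m-1})$ because the off-diagonal entries are order zero, giving ellipticity; and for the kernel one applies Lemma~\ref{lemma:commutating} to reduce to $F(\pi_m^*p+\pi_{m-1}^*q)=0$, uses transitivity of the magnetic flow to get a constant, and then separates the two parities. Your final identification of the surviving piece via homogeneity plus polarization is in fact slightly more direct than the paper's version, which first invokes the isomorphism $\pi_{m-1}^*\colon \mathcal{J}^{(m-1)/2}(S^0T^*M)\to\{\text{constants}\}$ to write $q=\mathcal{J}^{(m-1)/2}a$ for some $a\in C^\infty(M)$ and then appeals to ergodicity a second time to force $a$ to be constant.
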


\begin{proof}
The ellipticity is straight forward. Indeed, the principal symbol of $D_{\mu}$ is 
\[ \sigma_{D_{\mu}}(x,\xi)=\begin{pmatrix}
    \sigma_{D} & 0 \\ 0 & \sigma_{D}
\end{pmatrix}=\begin{pmatrix}
    ij_{\xi}^{m} & 0 \\ 0 & ij_{\xi}^{m-1}
\end{pmatrix}, \]
where $j_{\xi}^{k}=\mathcal{S}(\xi \otimes \bullet)$ is the symmetric multiplication by $\xi$ over tensors of order $k$. Since $D$ is elliptic, we conclude that $D_{\mu}$ is elliptic as well.

Now, let us study the kernel of $D_{\mu}$. By elliptic regularity, if $[p,q] \in \ker D_{\mu}$, then $[p,q]$ is smooth. Using Lemma \ref{lemma:commutating}, we see that $F(\pi_{m}^{*}p+\pi_{m-1}^{*}q)=0$. Since the magnetic flow is transitive, there exists a constant $c$ so that
\begin{equation} \label{eq:pi=c}
    \pi_{m}^{*}p(x,v)+\pi_{m-1}^{*}q(x,v)=c.
\end{equation}
First, let us assume that $m$ is odd. Then, evaluating \eqref{eq:pi=c} at $-v$ gives
\begin{equation} \label{eq:pi=c_minus}
    -\pi_{m}^{*}p(x,v)+\pi_{m-1}^{*}q(x,v)=c.
\end{equation}
It then follows from \eqref{eq:pi=c} and \eqref{eq:pi=c_minus} that $\pi_{m}^{*}p=0$, and hence $p=0$. Then, $\pi_{m-1}^{*}q=c$. It is known \cite{Lefeuvre}*{Section 14.1.1.3} that $\pi_{m-1}^{*}$ is an isomorphism between $\mathcal{J}^{(m-1)/2}(S^{0}T^{*}M)$ and the constants. Therefore, we conclude that $q=\mathcal{J}^{(m-1)/2}(a)$, for some $a \in C^{\infty}(M)$. Since we are working on $SM$, from $F\pi_{m-1}^{*}q=0$ we conclude $F\pi_{0}^{*}a=0$. This implies that $a$ is a constant function (for example, this follows from ergodicity, since the magnetic flow preserves the Liouville measure). The case with $m$ even is similar and we omit it.
\end{proof}

Finally, we have all the ingredients in order to state and prove the main result of this section.

\begin{thm} \label{thm:ps_decomposition}
Let $s \in \R$ and $f \in H^{s}(M, S^{m} (T^{*} M)) \times H^{s}(M, S^{m-1} (T^{*}M))$. Then, there exists a unique pair of pairs of tensors
\begin{align*}
    P &\in H^{s+1}(M,S^{m-1}(T^{*}M)) \times H^{s+1}(M,S^{m-2}(T^{*}M)), \\
    H &\in H^s (M, S^{m+1}(T^{*}M)) \times H^s (M, S^{m}(T^{*}M)),
\end{align*}
        such that $f=D_{\mu}P+H$, and $D_{\mu}^{*}H=0$.
\end{thm}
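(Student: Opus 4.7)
The plan is to imitate the classical Hodge/Helmholtz decomposition, using the matrix Laplacian $\Delta_{\mu} := D_{\mu}^{*}D_{\mu}$ as the main analytic tool. By Lemma~\ref{lemma:ellipticity}, $D_{\mu}$ is a first-order elliptic differential operator on the bundle of pairs of symmetric tensors, so $\Delta_{\mu}$ is a self-adjoint non-negative elliptic operator of order two on a closed manifold. Standard elliptic theory then supplies a generalized Green's operator $G$ on each $\mathbf{H}^{s}$ satisfying $G\Delta_{\mu} = \Delta_{\mu}G = I - \pi_{K}$, where $K := \ker \Delta_{\mu}$ and $\pi_{K}$ is the $\mathbf{L}^{2}$-orthogonal projection onto $K$. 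Moreover $G \colon \mathbf{H}^{s} \to \mathbf{H}^{s+2}$ is bounded. Since $\langle \Delta_{\mu}u,u\rangle_{\mathbf{L}^{2}} = \|D_{\mu}u\|_{\mathbf{L}^{2}}^{2}$, we have $K = \ker D_{\mu}$, which is the finite-dimensional space explicitly described in Lemma~\ref{lemma:ellipticity}.

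Granting this, I would define
\[ P := GD_{\mu}^{*}f, \qquad H := f - D_{\mu}P. \]
The mapping properties of $D_{\mu}^{*}$ and $G$ give $P \in \mathbf{H}^{s+1}$ and $H \in \mathbf{H}^{s}$, with the correct tensor orders. To verify $D_{\mu}^{*}H = 0$, I would compute
\[ D_{\mu}^{*}H = D_{\mu}^{*}f - \Delta_{\mu}GD_{\mu}^{*}f = \pi_{K}D_{\mu}^{*}f, \]
and then note that for any $u \in K = \ker D_{\mu}$, $\langle D_{\mu}^{*}f,u\rangle_{\mathbf{L}^{2}} = \langle f, D_{\mu}u\rangle_{\mathbf{L}^{2}} = 0$, so $\pi_{K}D_{\mu}^{*}f = 0$. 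This handles existence.

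For uniqueness, suppose $f = D_{\mu}P' + H'$ is another decomposition with $D_{\mu}^{*}H' = 0$. Then $D_{\mu}(P-P') = H' - H$, and applying $D_{\mu}^{*}$ yields $\Delta_{\mu}(P - P') = 0$, hence $P - P' \in K = \ker D_{\mu}$. Consequently $D_{\mu}(P - P') = 0$, so $H = H'$, and $P$ is determined uniquely once restricted to $K^{\perp}$ (which is automatic from $P \in \operatorname{ran}(G) \subset K^{\perp}$). This is the sense in which the pair is unique: the potential part $D_{\mu}P$ and the solenoidal part $H$ are completely determined by $f$.

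The ellipticity of $D_{\mu}$ together with the explicit description of $\ker D_{\mu}$ has already been done in Lemma~\ref{lemma:ellipticity}, so the remaining work is essentially bookkeeping with standard pseudodifferential theory; the only step requiring a dynamical input is the characterization of the kernel (via transitivity of the magnetic flow), and that was already handled. The main conceptual obstacle, then, is merely ensuring that $\Delta_{\mu}$ really is elliptic as a matrix operator---this is clean because $\sigma_{D_{\mu}}$ is block-diagonal with the symmetrized-multiplication symbol of $D$ on the diagonal, which is injective, so $\sigma_{\Delta_{\mu}} = \sigma_{D_{\mu}}^{*}\sigma_{D_{\mu}}$ is positive definite on $T^{*}M \setminus 0$.
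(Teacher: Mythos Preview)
Your proposal is correct and follows essentially the same Hodge-theoretic route as the paper: both build the decomposition from the second-order elliptic operator $D_{\mu}^{*}D_{\mu}$ and set $P$ equal to the (generalized) inverse applied to $D_{\mu}^{*}f$. The only cosmetic difference is that the paper adds the finite-rank projection $\Pi_{K_{m}}$ onto $\ker D_{\mu}$ to form $\Delta_{m}=D_{\mu}^{*}D_{\mu}+\Pi_{K_{m}}$, which is then genuinely invertible as a $\Psi$DO of order $-2$, whereas you work directly with the Green's operator $G$ of $D_{\mu}^{*}D_{\mu}$; since $D_{\mu}^{*}f\in K^{\perp}$ in both cases, the resulting $P$ and $H$ coincide. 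Your discussion of uniqueness (that $P$ is unique only modulo $\ker D_{\mu}$, or equivalently unique in $K^{\perp}$) is in fact more explicit than the paper's, which does not spell this out.
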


\begin{proof}
Define 
\[ K_{m}=\begin{cases} \mathcal{J}^{(m-1)/2}(1) & \text{if $m$ is odd}, \\ \mathcal{J}^{m/2}(1) & \text{if $m$ is even},\end{cases} \]
and 
\[ \Pi_{K_{m}}=\begin{cases}
    \langle \bullet, [0,K_{m}]\rangle [0,K_{m}] & \text{if $m$ is odd,} \\
    \langle \bullet, [K_{m},0]\rangle [K_{m},0] & \text{if $m$ is even,}
\end{cases} \]
be the orthogonal projection onto $\ker D_{\mu}$. Finally, set $\Delta_{m}=D_{\mu}^{*}D_{\mu}+\Pi_{K_{m}}$. By the properties of $D_{\mu}$ and $D_{\mu}^{*}$ (Lemma \ref{lemma:ellipticity}), $\Delta_{m}$ is is an elliptic differential operator of order $2$. Therefore, its inverse $\Delta_{m}^{-1}$ is a pseudodifferential operator of order $-2$. This allow us to define 
\begin{equation} \label{eq:pi_sol}
    \pi_{\ker D_{\mu}^{*}}:=Id-D_{\mu}\Delta_{m}^{-1}D_{\mu}^{*}.
\end{equation}
We set $H=\pi_{\ker D_{\mu}^{*}}f$ and $P=\Delta_{m}^{-1}D_{\mu}^{*}f$. It is immediate that $H$ and $P$ defined in this way satisfy the required properties.
\end{proof}

In the literature, the decomposition from Theorem \ref{thm:ps_decomposition} is called \emph{potential-solenoidal decomposition}. Elements in $\mathrm{ran}D_{\mu}$ are called \emph{potential tensors}, while the elements in $\ker D_{\mu}^{*}$ are called \emph{solenoidal tensors}.

We conclude this section with an observation that will be useful later. Since $\sigma_{D_{\mu}}$ is injective for $\xi \neq 0$, we have the following decomposition
\[
\begin{split}
    S^{m}(T_{x}^{*}M) \times S^{m-1}(T_{x}^{*}M) =&\mathrm{ran}(i\sigma_{D_{\mu}}|_{S^{m-1}(T_{x}^{*}M) \times S^{m-2}(T_{x}^{*}M})) \\
    &\oplus \ker (i \sigma_{D_{\mu}^{*}}|_{S^{m}(T_{x}^{*}M) \times S^{m-1}(T_{x}^{*}M)}) \\
    =&\mathrm{ran} \left( \begin{pmatrix}
        j_{\xi}^{m-1} & 0 \\ 0 & j_{\xi}^{m-2}
    \end{pmatrix}\bigg|_{S^{m-1}(T_{x}^{*}M) \times S^{m-2}(T_{x}^{*}M)} \right)  \\
    &\oplus \ker \left( \begin{pmatrix}
        \iota_{\xi^{\sharp}}^{m} & 0 \\ 0 & \iota_{\xi^{\sharp}}^{m-1}
    \end{pmatrix}\bigg|_{S^{m}(T_{x}^{*}M) \times S^{m-1}(T_{x}^{*}M)}   \right),
\end{split}
\]
where as in the proof of Lemma \ref{lemma:ellipticity}, the super-index is used to denote the order of the tensors where the operators act.

Let $\pi_{\ker_{i_{\xi}}}$ denote the projection onto the right space on the right-hand side. Observe that $\sigma_{\pi_{\ker D_{\mu}^{*}}}=\pi_{\ker_{i_{\xi}}}$.

\section{The Magnetic Normal Operator} \label{sec:magnetic_normal}

In this section we generalize the normal operator to magnetic flows. We study its analytic structure and obtain some properties that follow by its psuedodifferential behavior.

\subsection{Ellipticity}

Recall that the magnetic normal operator was defined in the Introduction by \eqref{eq:magnetic_normal}. One should think of $N_{m}$ as the analog of the normal operator for the tensorial X-ray transform, but in the context of closed manifolds. As expected, it has a pseudoddiferential nature.

As in Section \ref{sec:mag_ps}, the super-indices over $i_{\xi}$ denote the order of tensors where it acts. Similarly as in the case of the geodesic flow, we have $C_{n,m}=\sqrt{\pi} \frac{\Gamma(\frac{n-1}{2}+m)}{\Gamma(\frac{n}{2}+m)}$.

\begin{proof}[Proof of Theorem \ref{thm:normal_magnetic_psido}]
We will follow \cites{SSU05, DPSU07, Lefeuvre}. Using Lemmas \ref{lem:smoothing1} and \ref{lemma:smoothing_middle}, is enough to show that 
\begin{equation} \label{eq:normal_tru}
    \pi_{m *} \circ \int_{-\eps}^{\eps}e^{tF}dt \circ \pi_{m}^{*}
\end{equation}
is a classical pseudodifferential operator of order $-1$. We take $\eps>0$ smaller than the magnetic injectivity radius, so that the magnetic exponential map in a ball of radius $\eps$ is a diffeomorphism. Let us fix a local chart $(U,\psi)$. Let $\chi$ be a cutoff with support in $\psi(U)$ such that $e^{t F}(\operatorname{supp}(\chi)) \subset \psi(U)$ for all $t \in(-\eps, \eps)$, then for $f \in C_{\mathrm{comp}}^{\infty}(\varphi(U))$:
\begin{align*}
    &\left(\left(\chi \pi_{m *} \int_{-\eps}^{\eps} e^{t X} dt \pi_m^* \chi\right) f \right)_{j_{1}\ldots j_{m}}(x) \\
    &=\int_{S_{x}M} \chi(x)v_{j_{1}}\cdots v_{j_{m}} \int_{-\eps}^{\eps}\pi_{m}^{*}(\chi f)(\varphi_{t}(x,v))dtdv \\
    &=\int_{S_{x}M} \chi(x)v_{j_{1}}\cdots v_{j_{m}} \int_{-\eps}^{\eps}\chi(\pi(\varphi_{t}(x,v)))f(\pi(\varphi_{t}(x,v)))_{i_{1}\ldots i_{m}} \dot{\gamma}^{i_{1}} \cdots \dot{\gamma}^{i_{m}} dtdv \\
    &=\int_{S_{x}M} \int_{-\eps}^{\eps} \chi(x)v_{j_{1}}\cdots v_{j_{m}} \chi(\gamma(t))f_{i_{1}\ldots i_{m}}(\gamma(t))\dot{\gamma}^{i_{1}} \cdots \dot{\gamma}^{i_{m}} dtdv,
\end{align*}
where we used \eqref{eq:pi_m^*}. Here, $\pi \colon TM \to M$ is the base point projection, $\gamma$ is such that $\varphi_{t}(x,v)=(\gamma(t),\dot{\gamma}(t))$, and $v_{i}=(v^{\flat})_{i}=g_{ij}v^{j}$. Since the magnetic exponential map is just a $C^{1}$ diffeomorphism, it is better to use the change of variables as in \cite{DPSU07}: set 
\[ \gamma(t)-x=tm(t,v;x), \quad m(0,v;x)=v, \]
and introduce the variables $(r,\omega) \in \R \times S_{x}M$ via
\[ r=t|m(t,v;x)|_{g}, \quad \omega=\frac{m(t,v;x)}{|m(t,v;x)|_{g}}.\]
We obtain
\begin{align*}
    &\left(\left(\chi \pi_{m *} \int_{-\eps}^{\eps} e^{t F} dt \pi_m^* \chi\right) f \right)_{i_{1}\ldots i_{m}}(x)  \\
    =&\int_{S_{x}M} \int_{-\eps}^{\eps} \chi(x)v_{i_{1}}(r,\omega;x)\cdots v_{i_{m}} (r,\omega;x) \chi(x+r\omega)f_{i_{1}\ldots i_{m}}(x+r\omega) \times \\
    &\times w(r,\omega;x)^{i_{1}} \cdots w(r,\omega;x)^{i_{m}} J^{-1}(r,\omega;x)drd\omega,
\end{align*}
where $w(r,\omega;x)=\dot{\gamma}(t)$ and $J$ is the determinant of the change of variables, i.e., $J=\det \partial (r,\omega)/\partial(t,v)$. Since our computations are local, \cite{DPSU07}*{Lemma B.1} gives that \eqref{eq:normal_tru} is a classical $\Psi$DO of order $-1$ and with principal symbol given by
\begin{equation} \label{eq:prin_symbol}
    2\pi\int_{S_{x}M}\omega_{j_{1}} \cdots \omega_{j_{m}} \omega^{i_{1}} \cdots \omega^{i_{m}} \delta(\omega \cdot \xi)d\omega=\frac{2\pi}{|\xi|}\pi_{m *}\pi_{m}^{*}\delta (\bullet \cdot \hat{\xi}),
\end{equation}
where we used the homogeneity of the Dirac delta distribution. Of course, the same works with $\pi_{m-1 *}\Pi \pi_{m-1}^{*}$. For $\pi_{m-1 *}\Pi \pi_{m}^{*}$ and $\pi_{m *}\Pi \pi_{m-1}^{*}$ the analysis is the same, but we obtain that their principal symbols are
\begin{align*}
    2\pi\int_{S_{x}M}\omega_{j_{1}} \cdots \omega_{j_{m-1}} \omega^{i_{1}} \cdots \omega^{i_{m}} \delta(\omega \cdot \xi)d\omega=0, \\
    2\pi\int_{S_{x}M}\omega_{j_{1}} \cdots \omega_{j_{m}} \omega^{i_{1}} \cdots \omega^{i_{m-1}} \delta(\omega \cdot \xi)d\omega=0,
\end{align*}
respectively, since the integrals are over odd functions. To finish the proof, we recall from the proof of Theorem 16.2.1 in \cite{Lefeuvre} that $\pi_{m}^{*}j_{\xi}=\langle \xi,\bullet \rangle\pi_{m-1}^{*}$ on $\{\langle \xi,v \rangle=0 \}$. When we integrate this against $\delta(\omega \cdot \xi)$, this term vanishes. Now, using the potential-solenoidal decomposition from Theorem \ref{thm:ps_decomposition}, we have for $k=1,2$
\begin{equation} \label{eq:tensor_decom}
    T_{k}=\begin{pmatrix}
    j_{\xi}^{m-1}a_{1}^{k} \\j_{\xi}^{m-2}a_{2}^{k}
\end{pmatrix}+B_{k}, \qquad B_{k}=\begin{pmatrix}
    b_{1}^{k} \\ b_{2}^{k}
\end{pmatrix} \in \ker \begin{pmatrix}
        \iota_{\xi^{\sharp}}^{m} & 0 \\ 0 & \iota_{\xi^{\sharp}}^{m-1}
    \end{pmatrix}.
\end{equation}
Therefore, the only part of the tensors that survives after the integration if the one in the kernel of $\mathrm{diag}(i_{\xi^{\sharp}},i_{\xi^{\sharp}})$. In a more explicit way, so far we have, for $T_{1}, T_{2}$ as in \eqref{eq:tensor_decom},
\[
\begin{split}
    \langle \sigma_{N_{m}}(x,\xi)T_{1},T_{2} \rangle = \frac{2\pi}{|\xi|} \bigg[ 
    & \int_{\langle \xi,v \rangle=0} \pi_{m}^{*}b_{1}^{1}(v)\pi_{m}^{*}b_{1}^{2} dS_{\xi}(v), \\
    & \int_{\langle \xi,v \rangle=0} \pi_{m-1}^{*}b_{2}^{1}(v)\pi_{m-1}^{*}b_{2}^{2} dS_{\xi}(v) \bigg].
\end{split}
\]
Hence, it is enough to show that 
\[ C_{n, m} \int_{\langle\xi, v\rangle=0} \pi_m^* f_1(v) \pi_m^* f_2(v) \mathrm{d} S_{\xi}(v)=\int_{S_{x}M \cap \{\langle \xi,v\rangle=0\}} \pi_m^* f_1(v) \pi_m^* f_2(v) \mathrm{d} S(v), \]
for any $f_{1},f_{2} \in \ker i_{\xi^{\sharp}}$, which is part of the proof of Theorem 16.2.1 \cite{Lefeuvre}. This shows that that the principal symbol of $\pi_{m *}\Pi \pi_{m}^{*}$ is given by
\[ \frac{2\pi}{C_{n,m}}\frac{1}{|\xi|}\pi_{\ker_{i_{\xi}}}\pi_{m *}\pi_{m}^{*}\pi_{\ker_{i_{\xi}}}, \]
finishing the proof.    
\end{proof}

Ellipticity implies that we can construct approximate inverses for this operator. Indeed, the usual construction of parametrices for elliptic $\Psi$DOs gives:

\begin{lemma} \label{lem:ellipticity}
    $N_{m}$ is elliptic on solenoidal tensors. More precisely, there exist $Q \in \Psi^{1}(M)$, $R \in \Psi^{-\infty}(M)$ such that 
    \[ QN_{m}=\pi_{\ker_{D_{\mu}^{*}}}+R. \]
\end{lemma}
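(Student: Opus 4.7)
The plan is a standard elliptic parametrix construction, taking into account that $N_m$ is elliptic only on the solenoidal subspace $\ker D_\mu^*$.

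First, I would verify invertibility of $\sigma_{N_m}$ on the natural symbolic subspace. By Theorem \ref{thm:normal_magnetic_psido}, $\sigma_{N_m}(x,\xi) = \frac{2\pi}{|\xi|}\pi_{\ker_{i_\xi}} A(x,\xi) \pi_{\ker_{i_\xi}}$, where $A(x,\xi)$ is the block diagonal of $C_{n,m}^{-1}\pi_{m*}\pi_m^*$ and $C_{n,m-1}^{-1}\pi_{m-1*}\pi_{m-1}^*$. The identity established at the end of the proof of Theorem \ref{thm:normal_magnetic_psido} identifies, on each $\ker i_\xi^k$, the operator $C_{n,k}^{-1}\pi_{k*}\pi_k^*$ with the Gram operator $f \mapsto \int_{S_xM \cap \xi^\perp} \pi_k^*(f)\,\pi_k^*(\cdot)\,dS$. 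Since $\pi_k^*$ is injective on $\ker i_\xi^k$ (see \cite{Lefeuvre}*{Theorem 16.2.1}), this Gram operator is positive definite. Hence $\sigma_{N_m}(x,\xi)$ defines an isomorphism from $\mathrm{range}(\pi_{\ker_{i_\xi}})$ to itself for every $\xi \neq 0$.

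Next, I would construct the leading parametrix. Let $B(x,\xi) = \pi_{\ker_{i_\xi}} A(x,\xi) \pi_{\ker_{i_\xi}}$ and let $B^+(x,\xi)$ denote its generalized inverse, i.e.\ the inverse on $\mathrm{range}(\pi_{\ker_{i_\xi}})$ extended by $0$ on the orthogonal complement. Define $q(x,\xi) = \frac{|\xi|}{2\pi} B^+(x,\xi)$, a $1$-homogeneous symbol away from $\xi = 0$. After a smooth excision near $\xi = 0$, $q$ is the principal symbol of some $Q_0 \in \Psi^1(M)$ acting on the pair of symmetric tensor bundles, and by construction $\sigma_{Q_0}\sigma_{N_m} = \pi_{\ker_{i_\xi}}$. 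By the observation at the end of Section \ref{sec:mag_ps}, this coincides with the principal symbol of $\pi_{\ker D_\mu^*}$, so $E_0 := \pi_{\ker D_\mu^*} - Q_0 N_m \in \Psi^{-1}(M)$.

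Finally, I would iterate. The standard asymptotic parametrix recipe produces operators $Q_j \in \Psi^{1-j}(M)$ so that the partial sums satisfy $(Q_0 + \cdots + Q_N)N_m - \pi_{\ker D_\mu^*} \in \Psi^{-N-1}(M)$. The inductive step at each order relies on the same symbolic inversion as in the first step: the principal symbol of the current error remains in $\mathrm{range}(\pi_{\ker_{i_\xi}})$ (because $\sigma_{N_m}$ vanishes on the orthogonal complement while $\sigma_{\pi_{\ker D_\mu^*}}$ is precisely $\pi_{\ker_{i_\xi}}$), so one corrects by composing with $q$ on the relevant symbolic component. A Borel sum $Q \sim \sum_j Q_j$ then yields $Q \in \Psi^1(M)$ with $R := QN_m - \pi_{\ker D_\mu^*} \in \Psi^{-\infty}(M)$. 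The only substantive point is the positivity of $\pi_{k*}\pi_k^*$ on $\ker i_\xi^k$ in the first step; this is already verified in the proof of Theorem \ref{thm:normal_magnetic_psido}, so everything after that is the routine parametrix machinery.
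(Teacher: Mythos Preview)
Your proposal is correct and takes essentially the same approach as the paper: the paper gives no proof at all beyond the sentence ``the usual construction of parametrices for elliptic $\Psi$DOs gives [the lemma]'', and you have simply written out that standard parametrix construction in detail. The one point worth tightening is the justification of the iterative step: what you actually need is that $\sigma_{-j-1}(E_j)$ vanishes on the \emph{complement} of $\operatorname{range}(\pi_{\ker i_\xi})$ (not merely that its range lies there), and this follows cleanly from the exact identities $N_m = N_m\pi_{\ker D_\mu^*}$ and $\pi_{\ker D_\mu^*}^2 = \pi_{\ker D_\mu^*}$, which force $E_j = E_j\pi_{\ker D_\mu^*}$ and hence $\sigma(E_j) = \sigma(E_j)\pi_{\ker i_\xi}$ at top order.
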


\subsection{Injectivity and Surjectivity}

In this subsection we explore the injectivity and surjectivity of $N_{m}$, and how is related with the one of $I_{m}$. We begin with the next result.

\begin{lemma} \label{lemma:inj_equiv}
    $I_{m}$ is s-injective if and only if $N_{m}$ is injective on solenoidal tensors, that is, in $H_{\mathrm{sol}}^{s}=\ker D_{\mu}^{*}|_{H^{s}(S^{m}(T^{*}M)) \times H^{s}(S^{m}(T^{*}M))}$ for all $s \in \R$.
\end{lemma}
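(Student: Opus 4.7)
The plan is to exploit the factorization $N_m = \pi_{\bullet *}(\Pi + 1\otimes 1)\pi_\bullet^*$, where I use the shorthand $\pi_\bullet^*[H_1,H_2] := \pi_m^* H_1 + \pi_{m-1}^* H_2$ and $\pi_{\bullet *} f := [\pi_{m*}f,\pi_{m-1*}f]$. Since these are $\mathbf{L}^2$-adjoints by construction, a direct expansion yields the identity
\begin{equation*}
    \langle N_m H, H\rangle_{\mathbf{L}^2} \;=\; \langle \Pi \pi_\bullet^* H,\, \pi_\bullet^* H\rangle_{L^2(SM)} \;+\; \Bigl(\int_{SM} \pi_\bullet^* H \, d\Sigma\Bigr)^2,
\end{equation*}
in which both summands are non-negative by Lemma \ref{lemma:pi_properties}(iii). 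This identity, together with Liv\v sic theory and the potential-solenoidal decomposition, is the whole engine of the argument.

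For the $(\Rightarrow)$ direction, suppose $I_m$ is s-injective and take $H \in H^s_{\mathrm{sol}}$ with $N_m H = 0$. First I would apply Lemma \ref{lem:ellipticity} to bootstrap regularity: writing $H = \pi_{\ker D_\mu^*} H = Q N_m H - R H = -RH$ with $R$ smoothing shows $H \in C^\infty$. Then the displayed identity forces both $\langle \Pi\pi_\bullet^* H, \pi_\bullet^* H\rangle = 0$ and $\int_{SM}\pi_\bullet^* H\, d\Sigma = 0$. By Lemma \ref{lemma:pi_properties}(iii) the first gives $\pi_\bullet^* H = Fu + v$ with $u \in C^\infty(SM)$ and $v \in \ker F$; topological transitivity of the magnetic flow forces $v$ to be constant, while $\int Fu\,d\Sigma = 0$ (volume preservation) combined with the vanishing of $\int \pi_\bullet^* H\, d\Sigma$ forces that constant to be zero. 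Hence $\pi_\bullet^* H = Fu$, and integrating around any closed magnetic orbit gives $I_m H \equiv 0$. By s-injectivity $H = D_\mu E$ for some $E$; combining with $D_\mu^* H = 0$ yields $D_\mu^* D_\mu E = 0$, so pairing with $E$ gives $\|D_\mu E\|^2 = 0$, i.e.\ $H = 0$.

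For the $(\Leftarrow)$ direction, suppose $N_m$ is injective on every $H^s_{\mathrm{sol}}$ and take smooth $[p,q]$ with $I_m[p,q] = 0$. The potential-solenoidal decomposition (Theorem \ref{thm:ps_decomposition}) produces a smooth splitting $[p,q] = D_\mu P + H$ with $H$ solenoidal, and Lemma \ref{lemma:commutating} gives $I_m H = I_m[p,q] = 0$. The smooth Liv\v sic theorem (Lemma \ref{lemma:livsic}) applied to $\pi_\bullet^* H \in C^\infty(SM)$ produces $u \in C^\infty(SM)$ with $\pi_\bullet^* H = Fu$. Then Lemma \ref{lemma:pi_properties}(i) gives $\Pi \pi_\bullet^* H = \Pi Fu = 0$, and $\int_{SM} Fu\, d\Sigma = 0$ by volume preservation, so $(\Pi + 1\otimes 1)\pi_\bullet^* H = 0$, whence $N_m H = 0$. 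The hypothesis forces $H = 0$, so $[p,q] = D_\mu P \in \mathrm{ran}\, D_\mu$.

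The main obstacle is the regularity bootstrap in the forward direction: Lemma \ref{lemma:pi_properties}(iii) characterizes $\ker \Pi$ only for smooth input, so one must invoke the parametrix from Lemma \ref{lem:ellipticity} to lift an a priori $H^s$-solenoidal element to smoothness before exploiting the positivity of $\Pi$; everything else is a bookkeeping exercise with adjoints, Liv\v sic, and the potential-solenoidal decomposition.
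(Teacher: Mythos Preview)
Your proof is correct and follows essentially the same approach as the paper's: elliptic regularity via Lemma~\ref{lem:ellipticity} to bootstrap, the positivity identity $\langle N_m H,H\rangle = \langle \Pi\pi_\bullet^*H,\pi_\bullet^*H\rangle + (\int \pi_\bullet^*H)^2$, the characterization of $\ker\Pi$ from Lemma~\ref{lemma:pi_properties}(iii), and Liv\v sic theory for the converse. The only cosmetic difference is that in the $(\Leftarrow)$ direction you first apply the potential--solenoidal decomposition and then show $N_mH=0$, whereas the paper shows $N_mf=0$ directly and leaves implicit the observation that $N_m$ annihilates potentials; your version is arguably cleaner.
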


\begin{proof}
First assume that $N_{m}$ is injective. Let us consider $f=[p,q] \in C^{\infty}(S^{m}(T^{*}M)) \times C^{\infty}(S^{m-1}(T^{*}M))$ with $0=I_{m}f=I(\pi_{m}^{*}p+\pi_{m-1}^{*}q)$. Since we are assuming that the flow is Anosov, by Liv\v sic theorem (Lemma \ref{lemma:livsic}), there exists $u \in C^{\infty}(SM)$ with $\pi_{m}^{*}p+\pi_{m-1}^{*}q=Fu$. Now,
\[ N_{m}f=\begin{pmatrix}
    \pi_{m *}\Pi (\pi_{m}^{*}p+\pi_{m-1}^{*}q) \\ \pi_{m-1 *}\Pi (\pi_{m}^{*}p+\pi_{m-1}^{*}q) \end{pmatrix}=\begin{pmatrix}
    \pi_{m *}\Pi Fu \\ \pi_{m-1 *}\Pi Fu
    \end{pmatrix} =0, \]
where in the second equality we used that $Fu$ integrates to 0 in $SM$ (since $F$ preserves the Liouville form and $M$ has no boundary), and in the last step we used Lemma \ref{lemma:pi_properties} (i).  

To prove the converse, fix $s \in \R$ and let $f=[p,q] \in H_{\mathrm{sol}}^{s}$. By Lemma \ref{lem:ellipticity}, $f$ is smooth. The non-negativity of $\Pi$ (Lemma \ref{lemma:pi_properties}) allows us to write $\sqrt{\Pi}$. Then, 
\begin{align*}
    0=&\langle N_{m}f,f \rangle_{\mathbf{L}^{2}} \\
    =&\left \langle \begin{pmatrix}
        \Pi (\pi_{m}^{*}p+\pi_{m-1}^{*}q) \\ \Pi (\pi_{m}^{*}p+\pi_{m-1}^{*}q) 
    \end{pmatrix}, \begin{pmatrix}
        \pi_{m}^{*}p \\ \pi_{m-1}^{*}q
    \end{pmatrix} \right\rangle_{\mathbf{L}^{2}} \\
    &+\left \langle \begin{pmatrix}
        (1 \otimes 1) (\pi_{m}^{*}p+\pi_{m-1}^{*}q) \\ (1 \otimes 1)(\pi_{m}^{*}p+\pi_{m-1}^{*}q) 
    \end{pmatrix}, \begin{pmatrix}
        \pi_{m}^{*}p \\ \pi_{m-1}^{*}q
    \end{pmatrix} \right\rangle_{\mathbf{L}^{2}} \\
    =& \int_{SM}(\sqrt{\Pi}\pi_{m}^{*}p+\sqrt{\Pi}\pi_{m-1}^{*}q)^{2}+\int_{SM}(\pi_{m}^{*}p+\pi_{m-1}q)^{2}
\end{align*}
It follows that both terms on the right-hand side of the last equality are zero. Thus, $\Pi (\pi_{m}^{*}p+\pi_{m-1}^{*}q)=0$. An application of Lemma \ref{lemma:pi_properties} (iii) implies that there exists $u \in C^{\infty}(SM)$ and $v \in \ker F$ so that $v+Fu=\pi_{m}^{*}p+\pi_{m-1}^{*}q$. However, $v$ is constant since $F$ preserves the Liouville form. Hence, using Stokes' theorem and that $F$ preserves the Liouville form, we obtain
\[ 0=\int_{SM}(\pi_{m}^{*}p+\pi_{m-1}q)=\int_{SM}v \]
which shows that $v=0$. We conclude that $Fu=\pi_{m}^{*}p+\pi_{m-1}^{*}q$, which shows that $f=[p,q] \in \ker I_{m}$. Since $f$ is solenoidal, s-injectivity of $I_{m}$ implies that $f \equiv 0$.    
\end{proof}

Since s-injectivity of $I_{m}$ is equivalent to that of $N_{m}$, and this last operator is elliptic on solenoidal tensors (Lemma \ref{lemma:inj_equiv}), and hence Fredholm, we conclude:

\begin{lemma}
The kernel of $I_m$ on the space of smooth solenoidal tensors is finite dimensional. 
\end{lemma}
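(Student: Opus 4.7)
The plan is to reduce the finite-dimensionality of the kernel of $I_m$ on smooth solenoidal tensors to the finite-dimensionality of the kernel of $N_m$ on the same space, and then exploit the ellipticity of $N_m$ on solenoidal tensors via the parametrix in Lemma~\ref{lem:ellipticity}.

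First, I would observe that the forward direction of the proof of Lemma~\ref{lemma:inj_equiv} actually establishes an unconditional inclusion $\ker I_m \cap C^\infty_{\mathrm{sol}} \subseteq \ker N_m \cap C^\infty_{\mathrm{sol}}$. Indeed, if $f = [p,q]$ is smooth (solenoidal or not) with $I_m f = 0$, Liv\v sic's theorem (Lemma~\ref{lemma:livsic}) provides $u \in C^\infty(SM)$ with $\pi_m^* p + \pi_{m-1}^* q = Fu$. Then $\Pi(Fu)=0$ by Lemma~\ref{lemma:pi_properties}(i), and the $1\otimes 1$ component vanishes because $Fu$ integrates to zero against the Liouville measure. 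Hence $N_m f = 0$. So it suffices to show $\ker N_m \cap C^\infty_{\mathrm{sol}}$ is finite-dimensional.

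Next, I would apply the parametrix identity from Lemma~\ref{lem:ellipticity}: there exist $Q \in \Psi^1(M)$ and $R \in \Psi^{-\infty}(M)$ with $QN_m = \pi_{\ker D_\mu^*} + R$. Given $f \in \ker N_m \cap C^\infty_{\mathrm{sol}}$, applying this identity yields
\[
    0 = QN_m f = \pi_{\ker D_\mu^*} f + Rf = f + Rf,
\]
so that $f = -Rf$. Consequently the identity on the subspace $V \defeq \ker N_m \cap C^\infty_{\mathrm{sol}}$ agrees with $-R|_V$. Since $R$ is smoothing, it is compact as an operator on $\mathbf{L}^2$ (say), and therefore its restriction to $V$ (viewed as a subspace of $\mathbf{L}^2$, using that $V$ is a linear subspace of smooth sections) is also compact. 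A normed space on which the identity map is compact must be finite-dimensional, so $V$ is finite-dimensional, and hence so is $\ker I_m \cap C^\infty_{\mathrm{sol}}$.

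No step presents a real obstacle: the two ingredients (the unconditional implication $I_m f = 0 \Rightarrow N_m f = 0$ and the parametrix on solenoidal tensors) are already available. The only mild subtlety is to ensure that the elements of $V$ are treated as bounded sections so that the compactness of $R$ yields the compactness of $\mathrm{Id}|_V$; this is immediate since $V$ is a subspace of a fixed Sobolev space (e.g.\ $\mathbf{L}^2$) and $R$ is bounded into $\mathbf{L}^2$.
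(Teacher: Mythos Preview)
Your proposal is correct and follows essentially the same approach as the paper. The paper simply observes that $N_m$ is elliptic on solenoidal tensors and hence Fredholm, so its kernel there is finite dimensional, and combines this with the inclusion $\ker I_m \subset \ker N_m$ on smooth solenoidal tensors coming from Lemma~\ref{lemma:inj_equiv}; you have spelled out the Fredholm step explicitly via the parametrix $QN_m=\pi_{\ker D_\mu^*}+R$, which is exactly the standard mechanism behind the paper's one-line appeal to Fredholmness.
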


We also have the following

\begin{lemma}
If $I_m$ is s-injective, then there exists $Q' \in \Psi^{1}(M)$such that: $Q' N_m=\pi_{\ker D_{\mu}^{*}}$.
\end{lemma}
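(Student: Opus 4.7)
The plan is to remove the smoothing remainder $R$ in the parametrix identity $QN_m = \pi_{\ker D_\mu^*} + R$ provided by Lemma \ref{lem:ellipticity}, using s-injectivity of $I_m$ to invert $N_m$ on solenoidal tensors.

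First, I would establish two structural properties of $N_m$. From Lemma \ref{lemma:commutating} applied at one index lower, $[\pi_m^*, \pi_{m-1}^*]D_\mu = F[\pi_{m-1}^*, \pi_{m-2}^*]$, so composing with $(\Pi + 1\otimes 1)$ on the left and invoking $\Pi F = 0$ (Lemma \ref{lemma:pi_properties}(i)) together with $(1\otimes 1)F = 0$—the latter because $Fu$ has vanishing Liouville integral by Stokes—yields $N_m D_\mu = 0$. Moreover, $N_m$ is formally self-adjoint with respect to the $\mathbf{L}^2$-pairing, by self-adjointness of $\Pi$ (Lemma \ref{lemma:pi_properties}(ii)), self-adjointness of $1\otimes 1$, and the adjoint relation between $\pi_m^*$ and $\pi_{m*}$. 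Taking the adjoint of $N_m D_\mu = 0$ then yields $D_\mu^* N_m = 0$, so $N_m$ maps $\ker D_\mu^*$ into itself.

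Second, I would show that the restriction of $N_m$ to solenoidal tensors, $N_m|_{\mathrm{sol}} : \mathbf{H}^s_{\mathrm{sol}} \to \mathbf{H}^{s+1}_{\mathrm{sol}}$, is an isomorphism for every $s \in \R$. Ellipticity on solenoidal tensors (Lemma \ref{lem:ellipticity}) gives Fredholmness via the standard parametrix argument; self-adjointness forces the Fredholm index to be zero; Lemma \ref{lemma:inj_equiv} combined with s-injectivity of $I_m$ delivers injectivity, hence bijectivity.

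Third, I would define $Q' := (N_m|_{\mathrm{sol}})^{-1}\pi_{\ker D_\mu^*}$. For any $f$, writing $f_{\mathrm{sol}} := \pi_{\ker D_\mu^*}f$ and using $N_m f = N_m f_{\mathrm{sol}}$ from the first step,
\[ Q' N_m f = (N_m|_{\mathrm{sol}})^{-1} N_m f_{\mathrm{sol}} = f_{\mathrm{sol}} = \pi_{\ker D_\mu^*}f, \]
which gives the required identity. To verify $Q' \in \Psi^1(M)$, subtract this from the parametrix identity to obtain $(Q - Q')N_m = R$; evaluating both sides on $\mathbf{H}^{s+1}_{\mathrm{sol}} = \mathrm{ran}(N_m|_{\mathrm{sol}})$ via $h = (N_m|_{\mathrm{sol}})^{-1}g$ yields
\[ Q' = Q\pi_{\ker D_\mu^*} - R(N_m|_{\mathrm{sol}})^{-1}\pi_{\ker D_\mu^*}. \]
The second term is smoothing, since $R \in \Psi^{-\infty}$ composed with the Sobolev-bounded operator $(N_m|_{\mathrm{sol}})^{-1}$ remains smoothing, while $Q\pi_{\ker D_\mu^*} \in \Psi^1(M)$ because $Q \in \Psi^1$ and $\pi_{\ker D_\mu^*} \in \Psi^0$ by \eqref{eq:pi_sol}; hence $Q' \in \Psi^1(M)$. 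The main obstacle is Step 2: rigorously justifying Fredholmness on the solenoidal Sobolev scale and combining it with self-adjointness to force index zero; the remaining steps are essentially algebraic consequences.
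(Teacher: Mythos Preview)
Your proposal is correct and follows essentially the same route as the paper: both use ellipticity on solenoidal tensors to get Fredholmness, self-adjointness to force index zero, and Lemma~\ref{lemma:inj_equiv} for injectivity, then define $Q'$ via the inverse of $N_m$ on solenoidal tensors and show $Q'\in\Psi^1$ by writing $Q' = Q\pi_{\ker D_\mu^*} + (\text{smoothing})$. The only cosmetic difference is that the paper sets $Q' \defeq \pi_{\ker D_\mu^*}N_m^{-1}\pi_{\ker D_\mu^*}$ (which equals your $(N_m|_{\mathrm{sol}})^{-1}\pi_{\ker D_\mu^*}$ since the inverse already lands in solenoidal tensors) and obtains the identity $Q' = Q\pi_{\ker D_\mu^*} - RQ'$ directly from $QN_mQ' = (\pi_{\ker D_\mu^*}+R)Q'$, whereas you subtract the two identities and evaluate on the range of $N_m$; the resulting formulas coincide.
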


The proof is a verbatim of the proof of \cite{Lefeuvre}*{Theorem 16.2.7}, but we write it for completeness.

\begin{proof}
    As before, 
\[
\begin{split}
    N_{m} \colon &(H^{s}(S^{m}(T^{*}M)) \times H^{s}(S^{m}(T^{*}M)))_{\mathrm{sol}} \\
    &\to (H^{s+1}(S^{m}(T^{*}M)) \times H^{s+1}(S^{m}(T^{*}M)))_{\mathrm{sol}},
\end{split}
\]
is a Fredholm operator for all $s \in \R$. It is (formally) self-adjoint, which implies that its index is zero. By the hypothesis on $I_{m}$, Lemma \ref{lemma:inj_equiv} gives injectivity of $N_{m}$. Therefore, $N_{m}$ is invertible. Let $Q$ and $R$ be as in Lemma \ref{lem:ellipticity}, and write $Q' \defeq \pi_{\ker D_{\mu}^{*}}N_{m}^{-1}\pi_{\ker D_{\mu}^{*}}$. Then, for $N_{m}$ between these spaces, we have
\[
\begin{split}
    Q'+RQ' =(\pi_{\ker D_{\mu}^{*}}+R)Q'=QN_{m}Q' =Q\pi_{\ker D_{\mu}^{*}},
\end{split}
\]
where in the first step we used that $\pi_{\ker D_{\mu}^{*}}^{2}=\pi_{\ker D_{\mu}^{*}}$, and last step we used that $N_{m}\pi_{\ker D_{\mu}^{*}}=N_{m}$. Then, $Q=Q\pi_{\ker D_{\mu}^{*}}+ \Psi^{-\infty}$. Since $Q \in \Psi^{1}$ and $\pi_{\ker D_{\mu}^{*}} \in \Psi^{0}$ (which follows from \eqref{eq:pi_sol}), we obtain that $Q \in \Psi^{1}$. Finally, since $N_{m}$ maps solenoidal tensors into solenoidal tensors, $\pi_{\ker D_{\mu}^{*}}N_{m}=N_{m}$, and it follows that $Q'N_{m}=\pi_{\ker D_{\mu}^{*}}$.
\end{proof}

As a consequence, using the mapping properties of pseudodifferential operators, we have the following stability estimate that will be useful for the proof of Theorem \ref{thm:stability_xray} in Section \ref{sec:inj_stab_xray}.

\begin{prop} \label{prop:cont}
If $I_m$ is s-injective, then for any $s \in \R$, there exists a constant $C:=C(s)>0$ such that: for all solenoidal tensors $f \in H^s (M, S^{m} (T^{*}M) ) \times H^s (M, S^{m-1} (T^{*}M) )$
\[ \|f\|_{\mathbf{H}^s} \leq C \|N_m f \|_{\mathbf{H}^{s+1}}. \]
\end{prop}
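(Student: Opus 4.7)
The plan is to apply the pseudodifferential left-parametrix provided by the preceding lemma. Since $I_m$ is assumed s-injective, that lemma gives an operator $Q' \in \Psi^1(M)$ satisfying the exact (not merely mod-smoothing) identity
\[
Q' N_m = \pi_{\ker D_\mu^*}.
\]
For a solenoidal tensor $f \in \ker D_\mu^*|_{\mathbf{H}^s}$ we have $\pi_{\ker D_\mu^*} f = f$, hence $f = Q' N_m f$. So the estimate reduces to a continuity statement for $Q'$.

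Next, I would invoke the standard mapping property of classical pseudodifferential operators: since $Q'$ has order $1$, it acts continuously
\[
Q' \colon \mathbf{H}^{s+1}(M, S^m(T^*M) \times S^{m-1}(T^*M)) \to \mathbf{H}^{s}(M, S^m(T^*M) \times S^{m-1}(T^*M))
\]
for every $s \in \R$, with operator norm $C = C(s) > 0$. Applying this to $N_m f \in \mathbf{H}^{s+1}$ yields
\[
\|f\|_{\mathbf{H}^s} = \|Q' N_m f\|_{\mathbf{H}^s} \leq C\|N_m f\|_{\mathbf{H}^{s+1}},
\]
which is the desired inequality.

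There is essentially no obstacle beyond invoking what has already been established: the hard work is hidden in the construction of the left-parametrix $Q'$ (which required the ellipticity on solenoidal tensors from Lemma~\ref{lem:ellipticity}, the Fredholm self-adjoint index-zero argument, and the equivalence between s-injectivity of $I_m$ and injectivity of $N_m$ on solenoidal tensors from Lemma~\ref{lemma:inj_equiv}). The only subtlety worth checking is that the identity $Q' N_m = \pi_{\ker D_\mu^*}$ indeed holds on every $\mathbf{H}^s$ (not merely on smooth tensors), which follows by density of smooth solenoidal tensors in $\mathbf{H}^s_{\mathrm{sol}}$ together with the continuity of both sides as operators between Sobolev spaces.
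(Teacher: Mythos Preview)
Your proposal is correct and is exactly the argument the paper has in mind: the proposition is stated there as an immediate consequence of the preceding lemma ($Q'N_m=\pi_{\ker D_\mu^*}$) together with the standard Sobolev mapping properties of $Q'\in\Psi^1$. Your write-up simply makes explicit what the paper leaves as a one-line remark.
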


To understand when $N_{m}$ is surjective, we make the following observation. Since $N_{m}$ is formally a self-adjoint, elliptic on solenoidal tensors, it is a Fredholm operator of order 0, thus it is injective if and only if it is surjective (on solenoidal tensors). Furthermore, we have:

\begin{lemma}
    $I_{m}$ is s-injective if and only if
    \begin{align*}
        \tau_{m} \colon \mathcal{D}'_{\mathrm{inv}}(SM) &\to (C^{\infty}(M,S^{m}(T^{*}M)) \times C^{\infty}(M,S^{m-1}(T^{*}M)))_{\mathrm{sol}}, \\
        h &\mapsto [\pi_{m *}h,\pi_{m-1 *}h]^{\top},
    \end{align*}
    is surjective.
\end{lemma}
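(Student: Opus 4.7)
The plan is to exploit a factorization of $N_m$ through $\tau_m$. Define $S_m[r,s] := (\Pi + 1 \otimes 1)(\pi_m^* r + \pi_{m-1}^* s)$. Since Lemma~\ref{lemma:pi_properties} gives $\Pi F = 0$, and hence $F\Pi = 0$ via the self-adjointness of $\Pi$ together with $F^* = -F$, and since $1 \otimes 1$ produces a constant multiple of $\mathbf{1} \in \ker F$, the operator $S_m$ takes values in $\mathcal{D}'_{\mathrm{inv}}(SM)$. Dualizing Lemma~\ref{lemma:commutating} at order $m-1$ shows $D_\mu^* \tau_m h = 0$ whenever $Fh = 0$, so $\tau_m$ indeed lands in the solenoidal pairs, and by construction $N_m = \tau_m \circ S_m$ on the solenoidal subspace.

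For the forward direction, I would assume $I_m$ is s-injective. By Lemma~\ref{lemma:inj_equiv}, $N_m$ is injective on smooth solenoidal tensors. Since $N_m$ is Fredholm on $\mathbf{H}^s_{\mathrm{sol}}$ by the parametrix of Lemma~\ref{lem:ellipticity}, self-adjoint, and of index zero, injectivity upgrades to surjectivity. Applied to any smooth solenoidal $f$, this yields $g$ with $N_m g = f$; then $h := S_m g \in \mathcal{D}'_{\mathrm{inv}}(SM)$ satisfies $\tau_m h = f$, proving surjectivity of $\tau_m$.

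For the reverse direction, I would assume $\tau_m$ is surjective, take $H = [\tilde p, \tilde q]$ smooth and solenoidal with $I_m H = 0$, and aim to show $H = 0$; combined with Theorem~\ref{thm:ps_decomposition} and Lemma~\ref{lemma:commutating} this yields $\ker I_m = \mathrm{ran}\, D_\mu$. Since the magnetic flow is transitive Anosov, the smooth Liv\v sic theorem (Lemma~\ref{lemma:livsic}) produces $u \in C^\infty(SM)$ with $Fu = \pi_m^* \tilde p + \pi_{m-1}^* \tilde q$. Pick $h \in \mathcal{D}'_{\mathrm{inv}}(SM)$ with $\tau_m h = H$; then
\[
\|H\|_{\mathbf{L}^2}^2 = \langle \tilde p, \pi_{m*} h \rangle + \langle \tilde q, \pi_{(m-1)*} h \rangle = \langle \pi_m^* \tilde p + \pi_{m-1}^* \tilde q, h \rangle = \langle Fu, h \rangle = -\langle u, Fh \rangle = 0,
\]
so $H = 0$.

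The main delicacy is the distributional pairing in the reverse direction: the identity $\langle Fu, h \rangle = -\langle u, Fh \rangle$ is valid for smooth $u$ and distributional $h$ because $F$ preserves the Liouville measure, so $F^* = -F$, and $Fh = 0$ because $h$ is invariant. One also needs that $\tau_m h$ is smooth and solenoidal for $h \in \mathcal{D}'_{\mathrm{inv}}(SM)$, which is the content of the codomain declared in the statement and follows from the dualized commutation of Lemma~\ref{lemma:commutating}. The forward direction is essentially bookkeeping on top of the Fredholm theory set up in the earlier lemmas.
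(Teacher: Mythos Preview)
Your proof is correct and follows essentially the same approach as the paper: both directions are handled identically, using the factorization $N_m = \tau_m \circ S_m$ together with Fredholm surjectivity of $N_m$ for the forward implication, and the Liv\v sic/pairing argument $\|H\|_{\mathbf{L}^2}^2 = \langle Fu, h\rangle = -\langle u, Fh\rangle = 0$ for the reverse. Your write-up is in fact slightly more careful than the paper's in making explicit that one reduces to solenoidal $H$ (via Theorem~\ref{thm:ps_decomposition}) and in justifying the distributional integration by parts.
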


Here, $\mathcal{D}'_{\mathrm{inv}}(SM)$ denotes the set of distributions that are invariant under the magnetic flow, that is,
\[ \mathcal{D}'_{\mathrm{inv}}(SM)=\{ u \in \mathcal{D}'(SM):Fu=0\}. \]

\begin{proof}
    In first place, observe that the map is well defined, since
\begin{align*}
    \left\langle D_{\mu}^{*}\tau_{m}(h), \begin{bmatrix}
    r \\ s \end{bmatrix}
 \right\rangle_{\mathbf{L^{2}}}&=\left\langle \tau_{m}(h), D_{\mu}\begin{bmatrix}
    r \\ s \end{bmatrix}
 \right\rangle_{\mathbf{L^{2}}} \\
 &=\left \langle h, [\pi_{m}^{*},\pi_{m-1}^{*}]D_{\mu}\begin{bmatrix}
    r \\ s \end{bmatrix}
 \right\rangle_{L^{2}} \\
 &=\left\langle h,F[\pi_{m-1}^{*},\pi_{m-2}^{*}]\begin{bmatrix}
    r \\ s \end{bmatrix} \right\rangle_{L^{2}} \\
    &=-\left\langle Fh,[\pi_{m-1}^{*},\pi_{m-2}^{*}]\begin{bmatrix}
    r \\ s \end{bmatrix} \right\rangle_{L^{2}}=0,
\end{align*}
where the third equality follows from Lemma \ref{lemma:commutating}.

Now we show the statement. Let us assume first that $\tau_{m}$ is surjective. So, given $f=[p,q] \in \ker I_{m}$, there exists an invariant distribution $h$ such that $\tau_{m}h=f$. On the other hand, since $f \in \ker I_{m}$, by Liv\v sic theorem (Lemma \ref{lemma:livsic}), $[\pi_{m}^{*},\pi_{m-1}^{*}]f=Fu$. Then, 
\[ 0=\langle Fh,u\rangle_{L^{2}}=-\langle h,Fu \rangle_{L^{2}}=-\langle h, [\pi_{m}^{*},\pi_{m-1}^{*}]f\rangle_{L^{2}}=-\langle \tau_{m}h,f\rangle_{\mathbf{L}^{2}} =-\|f\|_{\mathbf{L}^{2}}^{2}. \]
Conversely, if $I_{m}$ is s-injective, then by Lemma \ref{lemma:inj_equiv} $N_{m}$ is injective on solenoidal tensors. As we mentioned before, this is equivalent to the surjectivity of $N_{m}$ on solenoidal tensors. So, given $f_{1}$ solenoidal, there exists a solenoidal $f_{2}$ so that $f_{1}=N_{m}f_{2}=\tau_{m}((\Pi+1\otimes 1) [\pi_{m}^{*}, \pi_{m-1}^{*}]f_{2})$. The fact that $(\Pi+1\otimes 1) [\pi_{m}^{*}, \pi_{m-1}^{*}]f_{2}$ is invariant follows from Lemma \ref{lemma:pi_properties} (i).
\end{proof}

\begin{prop} \label{prop:bound_tau}
    The operator
    \[ \Pi [\pi_{m}^{*},\pi_{m-1}^{*}] \colon H^{-s}(M,S^{m}(T^{*}M)) \times H^{-s}(M,S^{m-1}(T^{*}M)) \to H^{-s}(SM), \]
    is bounded for all $s>0$. Equivalently, 
    \[ \tau_{m} \Pi  \colon H^{s}(SM) \to H^{s}(M,S^{m}(T^{*}M)) \times H^{s}(M,S^{m-1}(T^{*}M)), \]
    is bounded for all $s>0$.
\end{prop}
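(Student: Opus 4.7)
The plan is first to reduce to the equivalent statement that $\tau_m \Pi \colon H^s(SM) \to \mathbf{H}^s$ is bounded for $s>0$; this equivalence follows by $L^2$-duality from the self-adjointness of $\Pi$ (Lemma \ref{lemma:pi_properties}(ii)), the identity $\tau_m^* = [\pi_m^*, \pi_{m-1}^*]$, and the observation that $\pi_k^*$ extends to a bounded map $H^{-s}(M) \to H^{-s}(SM)$ by duality with the fiber integration $\pi_{k*} \colon H^s(SM) \to H^s(M)$ for $s \geq 0$.

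Next, applying the decomposition from Lemma \ref{lem:resolvent-comp} we write
\[
\tau_m \Pi = -\tau_m \int_0^\infty \chi(t)\,(\varphi_{-t}^* + \varphi_t^*)\, dt \;-\; \tau_m R_+^{\hol}(0) B_+ \;-\; \tau_m R_-^{\hol}(0) B_- \;+\; (\text{finite rank}),
\]
where $B_\pm = \int_0^\infty \chi'(t)\, \varphi_{\mp t}^*\, dt$. The main flow-averaging term is bounded $H^s(SM) \to \mathbf{H}^s$ because $\varphi_t^*$ acts uniformly boundedly on $H^s(SM)$ over the compact support of $\chi$, while $\pi_{m*}$ and $\pi_{m-1\,*}$ are bounded $H^s(SM) \to H^s(M)$ for $s \geq 0$ by standard fiber integration; the finite-rank term is trivially bounded since $\Pi_0^\pm$ projects onto constants against the invariant Liouville measure.

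The main obstacle is bounding the two middle terms $\tau_m R_\pm^{\hol}(0) B_\pm$, which requires adapting the microlocal analysis of Lemma \ref{lemma:smoothing_middle}. By Lemma \ref{lem:integral-WF}, $\WF(B_\pm u) \subset \{\xi \colon \xi(F) = 0\} = \H^* \oplus \V^*$, and since the Hamiltonian flow $\Phi_t$ preserves this annihilator, Corollary \ref{lem:WF-of-Ru} confines $\WF(R_\pm^{\hol}(0) B_\pm u)$ to $(\H^* \oplus \V^*) \cup E_u^* \cup E_s^*$. The transversality assumption $\H^* \cap (E_s^* \cup E_u^*) = \{0\}$ then forces $(E_u^* \cup E_s^*) \cap (\FF^* \oplus \H^*) = \{0\}$, so the output wavefront intersects $\FF^* \oplus \H^*$ only inside $\H^*$. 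Because $\pi_{k*}$ is insensitive to wavefront in $\V^*$ by \eqref{eq:pushforward-wf}, only this restricted piece contributes after applying $\tau_m$.

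The hardest step will be promoting this wavefront containment to an $H^s$-norm bound. The strategy is to use a microlocal partition of unity separating a conic neighborhood of $E_s^* \cup E_u^*$ from its complement: on the complement, the escape function $G_m$ can be arranged to vanish so that the anisotropic Sobolev norms are comparable to the standard Sobolev norms, and Lemma \ref{lemma:resolvent_aniso} gives the boundedness of $R_\pm^{\hol}(0)$ on standard Sobolev spaces there; near $E_s^* \cup E_u^*$, the wavefront analysis above shows the composition is smoothing after post-composing with $\tau_m$. Combining the two regimes yields the desired boundedness of $\tau_m R_\pm^{\hol}(0) B_\pm \colon H^s(SM) \to \mathbf{H}^s$ and completes the proof.
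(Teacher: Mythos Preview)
Your overall plan---reduce by duality and exploit the anisotropic Sobolev mapping property of the resolvents together with the fact that $\pi_{k*}$ only detects wavefront in $\FF^* \oplus \H^*$---is the right one and matches the paper's. However, there is a concrete error in your final step, and the detour through Lemma~\ref{lem:resolvent-comp} is unnecessary.

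The error is in the claim that on the complement of a conic neighborhood of $E_s^* \cup E_u^*$ one can arrange $G_m = 0$ so that the anisotropic norm coincides with the standard Sobolev norm. With $G_m = m\log|\xi|$ and $A_s = \Op(e^{sG_m})$, the space $\mathcal{H}_+^s = A_s^{-1}(L^2)$ is microlocally $H^{sm(x,\xi)}$; setting $m=0$ gives $L^2$, not $H^s$, so your argument does not produce the required $H^s$ bound there. What is actually needed is to take the order function equal to $+1$ outside a small conic neighborhood of $E_u^*$ (this is compatible with the constraints $m\equiv 1$ near $E_s^*$ and $m\equiv -1$ near $E_u^*$). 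Then $\mathcal{H}_+^s$ agrees with $H^s$ microlocally on all of $\FF^* \oplus \H^*$, which is exactly where $\pi_{k*}$ reads the output. A symmetric choice of order function handles $R_-^{\hol}(0)$.

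Once this is corrected, the decomposition via Lemma~\ref{lem:resolvent-comp} (and the operators $B_\pm$, the microlocal partition, the separate flow-out wavefront analysis) becomes superfluous. The paper argues directly: for $h \in H^s(SM)$, Lemma~\ref{lemma:asino_prop} gives $h \in \mathcal{H}_+^s$, and Lemma~\ref{lemma:resolvent_aniso} then gives $R_+^{\hol}(0)h \in \mathcal{H}_+^s$. With the order function chosen as above, this means $R_+^{\hol}(0)h$ is microlocally in $H^s$ on $\FF^* \oplus \H^*$; since by transversality $E_u^* \cap (\FF^* \oplus \H^*) = \{0\}$, and since $\pi_{k*}$ only sees wavefront in $\FF^* \oplus \H^*$ by \eqref{eq:pushforward-wf}, one gets $\pi_{k*} R_+^{\hol}(0)h \in H^s$ immediately. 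The same with the opposite sign convention gives the $R_-^{\hol}(0)$ piece. No splitting of the resolvent and no partition of unity are required.
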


\begin{proof}
    We will prove the second statement. For this, take $h \in H^{s}(SM)$. By Lemma \ref{lemma:asino_prop}, we have that $h \in \mathcal{H}_{+}^{s}$. By the mapping properties of the resolvent (Lemma \ref{lemma:resolvent_aniso}), we have $R_{+}(0)h \in \mathcal{H}_{+}^{s}$. Here we take the order function $m$ equal to $-1$ in a small conic neighborhood of $E_{u}^{*}$, and equal to $1$ outside a bigger but still small neighborhood. In particular, is 1 in $\mathbb{H}^{*} \oplus E_{0}^{*}$. Recall by \eqref{eq:wf-pi0-star} that $\WF(\pi_{m}^{*} \bullet ) \subset \mathbb{F}^{*} \oplus \mathbb{H}^{*}$. So, $\pi_{m}^{*}R_{+}(0) h$ has the same regularity as $R_{+}(0)h$, which is microlocally $H^{s}$. A similar argument shows that $\pi_{m}^{*}R_{-}(0) h \in \mathcal{H}_{+}^{s}$. Indeed, just as before we have $R_{-}(0)h \in \mathcal{H}_{-}^{s}$. Now take an order function been $1$ in a small neighborhood of $E_{s}^{*}$, and $-1$ outside a bigger neighborhood of $E_{s}^{*}$. Hence $m|_{\mathbb{H}^{*} \oplus E_{0}^{*}}=-1$ and as before, $\pi_{m}^{*}R_{-}(0)f \in H^{s}$, since $R_{-}(0)f$ is microlocally in $H^{s}$. This shows the mapping property for $\pi_{m}^{*}\Pi$, and the property for $\tau_{m}\Pi$ follows directly.
\end{proof}

\subsection{Coercivity}

In this section we prove a coercivity result for $N_{m}$. In the case of the geodesic flow, this can be used to prove the local rigidity result from \cite{2019-guillarmou-lefeuvre}, see \cites{GKL22, Lefeuvre}. The proof, as usual in this kind of estimates, is by contradiction.

\begin{prop}
    If $I_{m}$ is s-injective, then there exists a constant $C>0$ such that for any $f \in H^{-1/2}(M,S^{m}(T^{*}M)\times S^{m-1}(T^{*}M))$
    \begin{equation}
    \label{eq:coer}
        \langle N_{m}f,f \rangle_{\mathbf{L}^{2}} \geq C \|\pi_{\ker D_{\mu}^{*}}f\|_{\mathbf{H}^{-1/2}}^{2}.
    \end{equation}
\end{prop}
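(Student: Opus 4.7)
\emph{Strategy: proof by contradiction combining a coercive G\aa{}rding-type estimate with weak compactness, in analogy with the coercivity proof for the geodesic normal operator in \cite{Lefeuvre}*{Chapter 16}.}

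First I would reduce to the case where $f$ is itself solenoidal. Lemma~\ref{lemma:commutating} gives $F[\pi_{m}^{*},\pi_{m-1}^{*}]=[\pi_{m+1}^{*},\pi_{m}^{*}]D_{\mu}$, so any tensor in $\mathrm{ran}\,D_{\mu}$ pulls back via $[\pi_{m}^{*},\pi_{m-1}^{*}]$ to $Fu$ for some $u\in C^{\infty}(SM)$. Then $\Pi+1\otimes 1$ annihilates this pullback: $\Pi F=0$ by Lemma~\ref{lemma:pi_properties}(i), and $\int_{SM}Fu\,d\Sigma=0$ since the magnetic flow preserves the Liouville measure. Hence $N_{m}D_{\mu}=0$, and formal self-adjointness of $N_{m}$ upgrades this to $N_{m}=\pi_{\ker D_{\mu}^{*}}N_{m}\pi_{\ker D_{\mu}^{*}}$. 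Both sides of \eqref{eq:coer} then depend only on $\pi_{\ker D_{\mu}^{*}}f$, so it suffices to prove the estimate for solenoidal $f$.

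Next I would establish the G\aa{}rding-type bound
\[
\langle N_{m}f,f\rangle_{\mathbf{L}^{2}}\;\geq\;c\|f\|_{\mathbf{H}^{-1/2}}^{2}-C\|f\|_{\mathbf{H}^{-1}}^{2}
\]
for all solenoidal $f$. By Theorem~\ref{thm:normal_magnetic_psido}, $N_{m}\in\Psi_{\mathrm{cl}}^{-1}$ is formally self-adjoint with matrix-valued principal symbol pointwise comparable to $|\xi|^{-1}$ on $\ker i_{\xi}$; together with the non-negativity of $\Pi+1\otimes 1$ from Lemma~\ref{lemma:pi_properties}(iii), this lets one write $N_{m}\equiv B^{*}B\pmod{\Psi^{-\infty}}$ for a pseudodifferential square root $B\in\Psi^{-1/2}$ that is elliptic on solenoidal tensors, produced e.g.\ by Seeley's complex powers after augmenting $N_{m}$ by a lower-order operator supported on $\mathrm{ran}\,D_{\mu}$ that does not disturb solenoidal tensors. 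The Fredholm estimate for the elliptic $B$ on solenoidal tensors then yields the displayed bound.

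Now suppose for contradiction that no such $C>0$ exists, so there is a sequence $\{f_{n}\}$ of solenoidal tensors with $\|f_{n}\|_{\mathbf{H}^{-1/2}}=1$ and $\langle N_{m}f_{n},f_{n}\rangle_{\mathbf{L}^{2}}\to 0$. Passing to a weakly convergent subsequence $f_{n}\rightharpoonup f_{\infty}$ in $\mathbf{H}^{-1/2}$, the Rellich embedding $\mathbf{H}^{-1/2}\hookrightarrow\mathbf{H}^{-1}$ upgrades this to strong convergence $f_{n}\to f_{\infty}$ in $\mathbf{H}^{-1}$, and weak continuity of $D_{\mu}^{*}$ ensures $f_{\infty}$ is still solenoidal. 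The non-negative symmetric form $Q(f):=\langle N_{m}f,f\rangle$ is weakly lower semicontinuous on $\mathbf{H}^{-1/2}$ since $N_{m}\colon\mathbf{H}^{-1/2}\to\mathbf{H}^{1/2}$ is bounded, so $Q(f_{\infty})\leq\liminf Q(f_{n})=0$. Cauchy--Schwarz for non-negative quadratic forms then forces $\langle N_{m}f_{\infty},g\rangle=0$ for all test $g$, i.e.\ $N_{m}f_{\infty}=0$, and by Lemma~\ref{lemma:inj_equiv} combined with the s-injectivity assumption we conclude $f_{\infty}=0$. Feeding $f_{n}$ into the G\aa{}rding bound gives $c=c\|f_{n}\|_{\mathbf{H}^{-1/2}}^{2}\leq\langle N_{m}f_{n},f_{n}\rangle+C\|f_{n}\|_{\mathbf{H}^{-1}}^{2}\to 0$, the desired contradiction.

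The main obstacle will be the G\aa{}rding step: the principal symbol of $N_{m}$ annihilates $\mathrm{ran}\,j_{\xi}$, so $N_{m}$ is only elliptic on the solenoidal subbundle and the textbook coercive G\aa{}rding inequality does not apply to $N_{m}$ directly. Constructing the square root via Seeley's complex powers of a suitable globally elliptic modification (using that $\pi_{\ker D_{\mu}^{*}}\in\Psi^{0}$ from \eqref{eq:pi_sol}), or alternatively applying G\aa{}rding on the subbundle $\ker i_{\xi}$ with matrix symbol calculus, are standard workarounds, and the identity $N_{m}=\pi_{\ker D_{\mu}^{*}}N_{m}\pi_{\ker D_{\mu}^{*}}$ from the first paragraph ensures that any such modification is harmless on solenoidal tensors.
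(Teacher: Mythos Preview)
Your proposal is correct and follows essentially the same strategy as the paper: reduce to solenoidal $f$, factor $N_{m}$ through an order $-1/2$ square root $B$ that is elliptic on solenoidal tensors, and run a contradiction argument via a normalized sequence combined with compactness and Lemma~\ref{lemma:inj_equiv}.

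The technical details differ slightly. The paper constructs $B$ explicitly at the principal-symbol level by quantizing $|\xi|^{-1/2}$ times the square root $S_{m}$ of $\pi_{m*}\pi_{m}^{*}$, obtaining $N_{m}=\pi_{\ker D_{\mu}^{*}}B^{*}B\pi_{\ker D_{\mu}^{*}}+R$ with $R\in\Psi^{-2}$ (not $\Psi^{-\infty}$); your use of Seeley's complex powers is heavier machinery but equally valid. In the contradiction step the paper uses the compactness of the remainders $R,R'$ to show $(f_{n})$ is Cauchy and hence strongly convergent in $\mathbf{H}^{-1/2}$, whereas you pass through weak convergence plus Rellich to get strong convergence only in $\mathbf{H}^{-1}$ and then invoke weak lower semicontinuity of the quadratic form. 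Both routes close the argument; the paper's version is marginally more direct since strong $\mathbf{H}^{-1/2}$ convergence immediately gives $\|f_{\infty}\|_{\mathbf{H}^{-1/2}}=1$ and $\langle N_{m}f_{\infty},f_{\infty}\rangle=0$ without the lower-semicontinuity step.
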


\begin{proof}
    Let $S_{m}(x): S^{m} T_{x}^{*}M \to S^{m} T_{x}^{*}M$ the square root of $\pi_{m *} \pi_{m}^{*} \colon S^{m}T_{x}^{*}M \to S^{m} T_{x}^{*}M$. For each $m$, define $b \in S^{-1 / 2}(T^{*}M, \operatorname{End}(S^{m}T^{*}M))$ by
\[
b_{m}(x, \xi):=\left(\frac{2 \pi}{C_{n, m}}\right)^{1 / 2} \chi(x, \xi)|\xi|^{-1 / 2} S_{m}(x),
\]
where $\chi \in C^{\infty}(T^{*}M)$ vanishes near the zero section in $T^{*}M$ and is equal to 1 for $|\xi|>1$. Define $B:=\mathrm{diag}(\mathrm{Op}(b_{m}),\mathrm{Op}(b_{m-1}))$, where Op is the left quantization on $M$. Then,
\[
N_{m}=\pi_{\ker D_{\mu}^{*}}B^{*}B\pi_{\ker D_{\mu}^{*}}+R,
\]
where $R \in \Psi^{-2}$. Hence, for $f \in H^{-1 / 2}(M, S^{m}(T^{*}M) \times S^{m-1}(T^{*}M))$ we have:
\begin{equation}
\label{eq:normal_coer1}
\langle N_{m}f,f\rangle_{\mathbf{L}^{2}}=\|B \pi_{\ker D_{\mu}^*} f\|_{\mathbf{L}^2}^2+\langle R f, f\rangle_{\mathbf{L}^{2}}.    
\end{equation}
Since $B$ is elliptic, there exist $Q \in \Psi^{1/2}$ and $R' \in \Psi^{-\infty}$ such that $Q B \pi_{\ker D_{\mu}^{*}}=\pi_{\ker D_{\mu}^{*}}+R'$. By the mapping properties of pseudodifferential operators, we conclude that 
\begin{align}
    \|\pi_{\ker D_{\mu}^{*}} f\|_{\mathbf{H}^{-1 / 2}}^{2} & \leq 2(\|Q B \pi_{\ker D_{\mu}^{*}} f\|_{\mathbf{H}^{-1/2}}^2+\|R' f\|_{\mathbf{H}^{-1 / 2}}^{2}) \\
    & \leq C (\|B \pi_{\ker D_{\mu}^{*}} f\|_{\mathbf{L}^2}^2+\|R' f\|_{\mathbf{H}^{-1/2}}^2),
\end{align}
for any $f \in C^{\infty}(M, S^{m}(T^{*}M) \times S^{m-1}(T^{*}M))$.

Now, observe that the result is trivial when $f$ is potential, since the right-hand side of \eqref{eq:coer} is zero in this case. So, we can assume that $f$ is solenoidal, i.e., $\pi_{\ker D_{\mu}^{*}}f=f$. Hence, using the previous inequality together with \eqref{eq:normal_coer1}, we find
\[
\|f\|_{\mathbf{H}^{-1/2}}^{2} \leq C ( \langle N_{m}f,f \rangle_{\mathbf{L}^{2}}- \langle Rf,f \rangle_{\mathbf{L}^{2}}+\|R'f\|_{\mathbf{H}^{-1/2}}^{2} ).
\]
This implies
\begin{equation}
    \label{eq:normal_coer2}
    \|f\|_{\mathbf{H}^{-1/2}}^{2} \leq C ( \langle N_{m}f,f \rangle_{\mathbf{L}^{2}}+ \|Rf\|_{\mathbf{H}^{1/2}} \|f\|_{\mathbf{H}^{-1/2}}+\|R'f\|_{\mathbf{H}^{-1/2}}^{2}).
\end{equation}
Now we argue by contradiction, that is, we assume that \eqref{eq:coer} does not hold. Hence, there exist a sequence $f_{n} \in C^{\infty}(M,S^{m}(T^{*}M) \times S^{m-1}(T^{*}M))$ of solenoidal tensors such that $\|f_{n}\|_{H^{-1/2}}=1$ and satisfying 
\begin{equation}
    \label{eq:normal_coer3}
    \|\sqrt{\Pi_{m}}f_{n}\|_{\mathbf{L}^{2}}^{2} \leq \|f_{n}\|_{\mathbf{H}^{-1/2}}^{2}/n=1/n \to 0.
\end{equation}
Since $R,R'$ are pseudodifferentials of negative order, they are compact. Therefore, up to passing to a subsequence, we can assume that $Rf_{n} \to v_{1}$ in $H^{1/2}$ and $R'f_{n}\to v_{2}$ in $H^{-1/2}$. This together with \eqref{eq:normal_coer2} imply that $(f_{n})_{n}$ is a Cauchy sequence in $H^{-1/2}$. Thus, it converges to some unitary solenoidal element $v_{3} \in H^{-1/2}$. By continuity of $N_{m} \in \Psi^{-1}$ and the mapping properties of pseudodifferentials operators, we find out that $N_{m}f_{n} \to N_{m}v_{3}$ in $H^{1/2}$. Using this in \eqref{eq:normal_coer3} gives $\langle N_{m}v_{3},v_{3} \rangle_{\mathbf{L}^{2}}=0$. Since $v_{3}$ is soleniodal, this equivalent to $\sqrt{N_{m}}v_{3}=0$, and therefore $N_{m}v_{3}=0$. Now, s-injectivity of the ray transform together with Lemma \ref{lemma:inj_equiv} gives $v_{3}=0$, contradicting the fact that $v_{3}$ is unitary.  
\end{proof}

\section{Injectivity and Stability of the magnetic X-ray transform} \label{sec:inj_stab_xray}

\subsection{Injectivity of the magnetic ray transform}

We devote this subsection to show injectivity of the magnetic ray transform action on pairs of 2-tensors and 1-forms, under some geometric conditions, see Theorem \ref{thm:inj-I2}. The proof is based on the methods of \cite{DPSU07} together with \cite{DP08}. 

Before going into some preliminarily results, let us recall the cases where injectivity is known for the X-ray transform for magnetic and thermostats flows. $I_{1}$ is s-injective for any Anosov thermostat ($\lambda$-)flow on closed surfaces \cite{DP07}, and for any Anosov magnetic flow in closed manifolds by \cite{DP08}. In the context of compact manifolds with boundary, the work \cite{DPSU07} has several results. For simple magnetic flows (i.e., for manifold such that the boundary is strictly convex with respect to magnetic geodesics, and such that the magnetic exponential map, defined using the magnetic flow, is a diffeomorphism at each point), they prove that $I_{1}$ is s-injective and that $I_{2}$ is s-injective assuming a geometric condition involving the curvature of the metric and a smallness condition on the Lorentz force (we explain this below). Their results on boundary action rigidity also show that $I_{m}$ s-injective for any analytic magnetic system. On closed surfaces, the results in \cite{JP09} show that $I_{2}$ is s-injective for negatively curved (in the sense of Riemannian manifolds) Gaussian thermostat flows. Furthermore, in \cite{Ainsworth15} it is proven that $I_{2}$ is s-injective for Anosov magnetic systems over surfaces. Finally, in \cite{AZ17}, the authors shown that $I_{m}$ is injective for any $m$ for any negatively curved Gaussian thermostat flow on closed surfaces. 

\subsubsection{Semibasic tensors}

Here we recall basic facts about \emph{semibasic tensors}. We will follow \cite{DPSU07}. To define them, let $\pi \colon T M \setminus \{0\} \to M$ be the base-point projection, and let $\mathscr{B}_S^r M:=\pi^* \mathcal{T}_S^r M$ denote the bundle of \emph{semibasic tensors of degree} $(r,s)$, where $\mathcal{T}_s^r M$ is the bundle of tensors of degree $(r,s)$ over $M$. Sections of the bundles $\mathscr{B}_s^r M$ are called \emph{semibasic tensor fields}, and we denote the space of smooth sections by $C^{\infty} (\mathscr{B}_s^r M )$ (in particular, $C^{\infty} (\mathscr{B}_0^0 M )=C^{\infty}(T M \setminus\{0\})$). 

For a semibasic tensor field $(T_{j_1 \ldots j_s}^{i_1 \ldots i_r})(x, v)$, we define its \emph{ (covariant) horizontal, vertical, and modified horizontal derivatives}
\begin{align*}
    \stackrel{\mathtt{h}}{\nabla}=\nabla_{|} \colon C^{\infty}(\mathscr{B}_{s}^{r}M) \to C^{\infty}(\mathscr{B}_{s+1}^{r}M), \\
    \stackrel{\mathtt{v}}{\nabla}=\nabla_{\cdot} \colon C^{\infty}(\mathscr{B}_{s}^{r}M) \to C^{\infty}(\mathscr{B}_{s+1}^{r}M), \\
    \stackrel{\mathtt{m}}{\nabla}=\nabla_{:} \colon C^{\infty}(\mathscr{B}_{s}^{r}M) \to C^{\infty}(\mathscr{B}_{s+1}^{r}M),
\end{align*}
by
\begin{align*}
(\nabla_{|} T)_{j_1 \ldots j_s k}^{i_1 \ldots i_r}=\nabla_{|k} T_{j_1 \ldots j_s}^{i_1 \ldots i_r}=T_{j_1 \ldots j_s | k}^{i_1 \ldots i_r}= & \frac{\partial}{\partial x^k} T_{j_1 \ldots j_s}^{i_1 \ldots i_r}-\Gamma_{k q}^p v^q \frac{\partial}{\partial v^p} T_{j_1 \ldots j_s}^{i_1 \ldots i_r} \\
& +\sum_{m=1}^r \Gamma_{k p}^{i_m} T_{j_1 \ldots j_s}^{i_1 \ldots i_{m-1} p i_{m+1} \ldots i_r} \\
&-\sum_{m=1}^r \Gamma_{k j_m}^p T_{j_1 \ldots j_{m-1} p j_{m+1} \ldots j_s}^{i_1 \ldots i_r}, \\
(\nabla_{\cdot} T)_{j_1 \ldots j_s k}^{i_1 \ldots i_r}=\nabla_{\cdot k} T_{j_1 \ldots j_s}^{i_1 \ldots i_r}=T_{j_1 \ldots j_s \cdot k}^{i_1 \ldots i_r}=&\frac{\partial}{\partial v^k} T_{j_1 \ldots j_s}^{i_1 \ldots i_r}, \\
(\nabla_{:} T)_{j_1 \ldots j_s k}^{i_1 \ldots i_r}=\nabla_{:k} T_{j_1 \ldots j_s}^{i_1 \ldots i_r}=T_{j_1 \ldots j_s: k}^{i_1 \ldots i_r}=& T_{j_1 \ldots j_s | k}^{i_1 \ldots i_r}+|v| Y_k^j T_{j_1 \ldots j_s \cdot j}^{i_1 \ldots i_r},
\end{align*}
respectively. We also define
\[ \nabla^{|i}=g^{ij}\nabla_{|j}, \quad \nabla^{\cdot i}=g^{ij}\nabla_{\cdot j}, \quad \nabla^{:i}=g^{ij}\nabla_{:j}. \]
The moral of these operators is that the horizontal derivative only takes in count the horizontal directions, while the vertical only the vertical ones, and the modified twist the horizontal one with the Lorentz force.

Given $u \in C^{\infty}(TM \setminus \{0\})$, we define
\[ \mathbf{F}u(x,v)=v^{i} u_{:i}=v^{i}(u_{|i}+|v|Y_{i}^{j}u_{\cdot j}). \]
Observe that $\mathbf{F}$ restricted to the unit tangent bundle is the generator of the magnetic flow $F$.

\subsubsection{Index}

Let $\Lambda$ be the $\R$-vector space of smooth vector fields $Z$ along $\gamma$ with $Z(0)=Z(T)=0$. Define the \emph{index operator} $\mathbb{I} \colon \Lambda \to \R$ by 
\[ \mathbb{I}(Z,Z)=\int_{0}^{T}\{ |\dot{Z}|^{2}-(\mathcal{C}(Z),Z )-( Y(\dot{\gamma}),Z )^{2} \}dt, \]
where 
\[ \mathcal{C}(Z)=R(\dot{\gamma},Z)\dot{\gamma}-Y(\dot{Z})-(\nabla_{Z}Y)(\dot{\gamma}). \]

One can see that $\mathcal{C}$ appears naturally on the equation for Jacobi fields of magnetic geodesics, see for instance \cite{DP08}*{Section 4}.

\begin{lemma}[\cite{DP08}*{Lemma 4.10}] \label{lemma:index}
If $Z$ is orthogonal to $\dot{\gamma}$, then $\mathbb{I}(Z,Z) \geq 0$, with equality if and only if $Z=0$.
\end{lemma}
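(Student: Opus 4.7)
My plan is to exhibit $\mathbb{I}(Z,Z)$ as the second variation of the magnetic action $\mathbb{A}$ from \eqref{eq:action} at the magnetic geodesic $\gamma$, restricted to variations $Z \in \Lambda$ orthogonal to $\dot\gamma$. Concretely, I would take a one-parameter family $\gamma_s(t) = \exp_{\gamma(t)}(sZ(t))$, compute $\left.\frac{d^2}{ds^2}\right|_{s=0}\mathbb{A}(\gamma_s)$, and verify that the resulting quadratic form agrees with the defining expression of $\mathbb{I}$.

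The Riemannian part $\tfrac{1}{2}\int |\dot\gamma_s|^2 dt$ contributes the usual geodesic second variation $\int_0^T\{|\dot Z|^2 - \langle R(\dot\gamma,Z)\dot\gamma, Z\rangle\}\,dt$. Expanding $-\int_{\gamma_s}\alpha$ and using $d\alpha = \Omega$ with $\Omega(v,\cdot) = \langle Yv,\cdot\rangle$, followed by integration by parts, produces magnetic-correction contributions of the form $\langle Y(\dot Z), Z\rangle + \langle (\nabla_Z Y)(\dot\gamma), Z\rangle$. A careful handling of the orthogonality constraint $Z \perp \dot\gamma$ — propagated along $\gamma_s$ and coupled to $Y(\dot\gamma)$ via $\tfrac{d}{dt}\langle Z,\dot\gamma\rangle = 0$ together with the Lorentz--Newton law $D_t \dot\gamma = Y(\dot\gamma)$ — introduces the extra term $-\langle Y(\dot\gamma),Z\rangle^2$. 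Combining all contributions and using skew-adjointness of $Y$ to integrate $\langle Y(\dot Z),Z\rangle$ by parts then yields precisely $\mathbb{I}(Z,Z)$.

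With this variational interpretation, $\mathbb{I}(Z,Z) \geq 0$ follows from the local minimality of $\gamma$. Under the working hypotheses of \cite{DP08} (Anosov magnetic dynamics, under which \cite{1994-paternain-paternain} guarantees the absence of conjugate points on $[0,T]$), the magnetic geodesic $\gamma$ is a local minimum of $\mathbb{A}$ among fixed-endpoint curves, so its second variation is non-negative on $\Lambda$. For the equality case, a null vector $Z$ of the positive semi-definite form $\mathbb{I}$ is necessarily a critical point of $W \mapsto \mathbb{I}(W,W)$, and its Euler--Lagrange equation is (the orthogonal projection of) the magnetic Jacobi equation $\ddot Z = \mathcal{C}(Z)$. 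With $Z(0) = Z(T) = 0$ and no conjugate points on $[0,T]$, such a Jacobi field must vanish identically.

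The main obstacle is obtaining the term $-\langle Y(\dot\gamma),Z\rangle^2$ cleanly. It does not fall directly out of a Taylor expansion of $\mathbb{A}$; rather, it emerges only after one correctly tracks how the orthogonality constraint on $Z$ interacts with the skew-adjointness of $Y$ and with the Lorentz--Newton law obeyed by the base curve $\gamma$. The bookkeeping of signs, of the boundary terms produced by integration by parts (which fortunately vanish thanks to $Z(0)=Z(T)=0$), and of the commutators between $\nabla_Z Y$ and $D_t$ along $\gamma$ is where the calculation is most delicate.
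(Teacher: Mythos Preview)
The paper does not prove this lemma; it is cited from \cite{DP08}, where the argument is the direct index-form method: the Anosov hypothesis rules out conjugate points for magnetic flows \cite{1994-paternain-paternain}, one expresses $Z$ in a frame of magnetic Jacobi fields that are nonvanishing on $(0,T]$, and a computation exhibits $\mathbb{I}(Z,Z)$ as manifestly nonnegative with equality forcing $Z=0$. Your variational route is different, and as written it has two problems.

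First, the second variation of the energy-based action $\mathbb{A}$ from \eqref{eq:action} is not $\mathbb{I}(Z,Z)$. For any fixed-endpoint variation with variation field $Z$ one computes
\[
\left.\frac{d^2}{ds^2}\right|_{s=0}\mathbb{A}(\gamma_s)=\int_0^T\bigl(|\dot Z|^2-\langle\mathcal{C}(Z),Z\rangle\bigr)\,dt=\mathbb{I}(Z,Z)+\int_0^T\langle Y(\dot\gamma),Z\rangle^2\,dt,
\]
so nonnegativity of this Hessian only yields $\mathbb{I}(Z,Z)\ge-\int\langle Y(\dot\gamma),Z\rangle^2$, which is useless. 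The extra term does not come from ``propagating orthogonality'' (for the exponential variation $A=D_sV|_{s=0}=0$, and all $A$-dependent pieces cancel regardless of any constraint). The functional that does give $\mathbb{I}$ is the \emph{length-based} action $L(\gamma)-\int_\gamma\alpha$: the second variation of $L$ carries the piece $-\langle\dot Z,\dot\gamma\rangle^2$, and the identity $\langle\dot Z,\dot\gamma\rangle=-\langle Z,Y(\dot\gamma)\rangle$ you correctly write down then produces exactly $-\langle Y(\dot\gamma),Z\rangle^2$. So your ingredients are right, but assembled on the wrong functional.

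Second, and more seriously, the step ``no conjugate points $\Rightarrow$ $\gamma$ is a local minimum'' is circular as stated: the standard proof of that implication \emph{is} positive-definiteness of the index form. To avoid the circle you would need an independent minimality argument---for instance, lift to the universal cover, where by \cite{1994-paternain-paternain} the magnetic exponential is a global diffeomorphism, and argue directly that the lifted segment minimizes $L-\int\alpha$ among fixed-endpoint curves. That step is absent from your proposal.
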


\subsubsection{The magnetic ray transform on 2-tensors and 1-forms}

For a magnetic system $(M, g, \alpha)$ and $(x, v) \in S M$, put
\[ k_\mu(x, v)=\sup _w \{2 K(x, \sigma_{v, \eta})+( Y(w), v)^2+(n+3)|Y(w)|^2-2((\nabla_w Y)(v), w)\}, \]
where the supremum is taken over all unit vectors $w \in T_x M$ orthogonal to $v$, and $K(x, \sigma_{v,w})$ is the sectional curvature of the 2-plane $\sigma_{v,w}$ spanned by $v$ and $w$. Define
\[ k_\mu^{+}(x, v)=\max \{0, k(x, v)\}, \]
and
\[ k(M, g, \alpha)=\sup _\gamma T_\gamma \int_0^{T_\gamma} k_\mu^{+}(\gamma(t), \dot{\gamma}(t)) d t, \]
where the supremum is taken over all closed magnetic geodesics $\gamma \colon [0,T_{\gamma}] \to M$. We will need the following result.

\begin{lemma}[\cite{DPSU07}*{Lemma 5.6}] \label{lemma:k_mu}
Assume that $k_{\mu} \leq 4$. Let $\gamma \colon [0, T] \to M$ be a unit speed magnetic geodesic. Then for every smooth vector field $Z$ along $\gamma$ vanishing at the endpoints of $\gamma$ and orthogonal to $\dot{\gamma}$ we have
\[
\int_{0}^{T}\{|\dot{Z}|^2-2 (\mathcal{C}(Z), Z )-( Y(Z), \dot{\gamma})^2-(n+2)|Y(Z)|^{2} \} d t \geq 0,
\]
with equality if and only if $Z=0$.
\end{lemma}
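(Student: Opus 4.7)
The plan is to reduce the problem to a coercivity inequality involving $|\dot Z - Y(Z)|^{2} - 4|Z|^{2}$ and then to invoke a Poincar\'e-type bound.

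First I would expand $-2(\mathcal{C}(Z),Z)$ using the definition $\mathcal{C}(Z) = R(\dot{\gamma},Z)\dot{\gamma} - Y(\dot Z) - (\nabla_{Z}Y)(\dot{\gamma})$, yielding
\[
-2(\mathcal{C}(Z),Z) = -2(R(\dot{\gamma},Z)\dot{\gamma},Z) + 2(Y(\dot Z),Z) + 2((\nabla_{Z}Y)(\dot{\gamma}),Z).
\]
Next I would use the antisymmetry of $Y$, namely $(Y(\dot Z),Z) = \Omega(\dot Z, Z) = -(Y(Z),\dot Z)$, to combine with $|\dot Z|^{2}$ via the algebraic identity
\[
|\dot Z|^{2} + 2(Y(\dot Z),Z) = |\dot Z|^{2} - 2(Y(Z),\dot Z) = |\dot Z - Y(Z)|^{2} - |Y(Z)|^{2}.
\]

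After substituting these into the integrand, the step is to factor $|Z|^{2}$ out of the non-derivative terms by setting $w := Z/|Z|$ (where $Z \neq 0$); using $|\dot{\gamma}|=1$ and $Z \perp \dot\gamma$ we have $(R(\dot\gamma,Z)\dot\gamma,Z) = K(\sigma_{\dot\gamma,w})|Z|^{2}$ and similar scalings for the remaining pieces. The integrand rewrites as
\[
|\dot Z - Y(Z)|^{2} - |Z|^{2}\bigl[\, 2K(\sigma_{\dot\gamma,w}) + (Y(w),\dot\gamma)^{2} + (n+3)|Y(w)|^{2} - 2((\nabla_{w}Y)(\dot\gamma),w) \,\bigr].
\]
The bracket is precisely the quantity whose supremum over unit $w \perp \dot\gamma$ defines $k_{\mu}(\gamma(t),\dot\gamma(t))$. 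The hypothesis $k_{\mu} \leq 4$ therefore reduces the statement to
\[
\int_{0}^{T}\bigl\{|\dot Z - Y(Z)|^{2} - 4|Z|^{2}\bigr\}\,dt \;\geq\; 0,
\]
with the vanishing condition $Z(0)=Z(T)=0$.

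The main obstacle is this last inequality, which is not a consequence of na\"ive Poincar\'e alone (the sharp constant $(\pi/T)^{2}$ on $|\dot Z|^{2}$ fails to dominate $4$ once $T$ is large). My approach here would be to expand $Z$ in a parallel orthonormal frame along $\gamma$ orthogonal to $\dot\gamma$, reducing to scalar components $z^{i}(t)$ with Dirichlet conditions, and then combine the one-dimensional Poincar\'e inequality with a cross-term estimate exploiting the antisymmetry of $Y$. In the DPSU07 setting of a simple magnetic system one has a uniform bound on $T$ coming from the magnetic convexity of the boundary, making the Poincar\'e constant large enough; in the present application this step should be controlled by the geometric setup in which the lemma is invoked. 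For the equality case, tracing back the inequalities: one needs $k_{\mu} = 4$ wherever $Z \neq 0$, and the Poincar\'e step is strict except for the extremal eigenfunction $\sin(\pi t/T)$, which in turn forces the structural constraints to collapse only at $Z \equiv 0$.
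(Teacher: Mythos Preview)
The paper does not prove this lemma; it is quoted from \cite{DPSU07}. So there is no in-paper argument to compare against, but your proposal has a real gap.

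Your algebraic reduction is correct: the integrand rewrites as
\[
|\dot Z - Y(Z)|^{2} - |Z|^{2}\bigl[\, 2K(\sigma_{\dot\gamma,w}) + (Y(w),\dot\gamma)^{2} + (n+3)|Y(w)|^{2} - 2((\nabla_{w}Y)(\dot\gamma),w) \,\bigr],
\]
and the bracket is bounded above by $k_\mu^+(\gamma(t),\dot\gamma(t))$. The error is the next step. Replacing the bracket by the constant $4$ throws away all the information: the resulting inequality
\[
\int_0^T\bigl\{|\dot Z - Y(Z)|^{2} - 4|Z|^{2}\bigr\}\,dt \geq 0
\]
is simply false for large $T$ (take $Y=0$ and $Z=\sin(\pi t/T)\,e$ with $e$ a parallel unit normal field). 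You noticed this, but the proposed rescue via a uniform bound on $T$ is unavailable in the Anosov setting of this paper, where closed magnetic geodesics have arbitrarily long period; and ``should be controlled by the geometric setup'' is not an argument.

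The hypothesis that actually drives the lemma is the \emph{integrated} condition $T\int_0^T k_\mu^+(\gamma,\dot\gamma)\,dt \leq 4$ (this is the quantity $k(M,g,\alpha)$ defined just before the lemma; the shorthand ``$k_\mu\le 4$'' refers to it). One keeps $k_\mu^+$ in the integrand and observes that, by antisymmetry of $Y$, the scalar function $f(t):=|Z(t)|$ satisfies $|f'| \leq |\dot Z - Y(Z)|$. Since $f(0)=f(T)=0$ one gets $2\max f \leq \int_0^T|\dot Z - Y(Z)|$, hence $(\max f)^2 \leq \tfrac{T}{4}\int_0^T|\dot Z - Y(Z)|^2$ by Cauchy--Schwarz, and therefore
\[
\int_0^T |Z|^2 k_\mu^+\,dt \;\leq\; (\max f)^2 \int_0^T k_\mu^+\,dt \;\leq\; \frac{1}{4}\Bigl(T\!\int_0^T k_\mu^+\,dt\Bigr)\int_0^T|\dot Z - Y(Z)|^2\,dt,
\]
with the parenthesis bounded by $4$. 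No Poincar\'e inequality enters, and the equality discussion is read off from when the Cauchy--Schwarz and $\max$ bounds are sharp, not from a sine eigenfunction.
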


\begin{thm} \label{thm:inj-I2}
$I_{2}$ is s-injective on Anosov magnetic systems satisfying $k_{\mu} \leq 4$.
\end{thm}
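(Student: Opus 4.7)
The plan is to adapt the boundary-case proof of \cite{DPSU07}*{Theorem 5.5} to the closed Anosov setting, replacing boundary conditions by the smooth Liv\v sic theorem (Lemma \ref{lemma:livsic}), in the spirit of \cite{DP08}. Given $[p,q] \in \ker I_{2}$ with $p \in C^{\infty}(M, S^{2}(T^{*}M))$ and $q \in C^{\infty}(M, T^{*}M)$, the vanishing $I(\pi_{2}^{*}p + \pi_{1}^{*}q)=0$ on all closed orbits of the transitive Anosov magnetic flow and Lemma \ref{lemma:livsic} produce $u \in C^{\infty}(SM)$ with
\[
    Fu = \pi_{2}^{*}p + \pi_{1}^{*}q.
\]

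The heart of the argument is then to show that $u$ is polynomial of degree at most one in the velocity variable, i.e., $u = \pi_{1}^{*}\xi + \pi_{0}^{*}\eta$ for some 1-form $\xi$ and function $\eta$. Once this is established, applying $F$ and using Lemma \ref{lemma:commutating} (with $m=1$) yields $\pi_{2}^{*}(D\xi) + \pi_{1}^{*}(D\eta+Y(\xi)) = \pi_{2}^{*}p+\pi_{1}^{*}q$, and comparing Fourier modes gives $p = D\xi$ and $q = D\eta + Y(\xi)$, i.e., $[p,q] = D_{\mu}[\xi,\eta]$, which is exactly the s-injectivity statement.

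To extract the polynomial structure of $u$, I would extend $u$ to a smooth function on $TM \setminus \{0\}$ by positive homogeneity of degree zero, so that the semibasic tensor calculus described in Section \ref{sec:inj_stab_xray} applies, and then invoke the magnetic Pestov identity of \cite{DPSU07}*{Sections 3--4}. On the closed manifold $SM$, every divergence contribution integrates to zero against the $F$-invariant Liouville measure, leaving an integrated identity whose curvature-Lorentz term is controlled pointwise, via the index-form estimate of Lemma \ref{lemma:k_mu} valid under $k_{\mu}\le 4$, by a non-negative quadratic form in the higher vertical Fourier modes of $u$. This quadratic form must therefore vanish, which forces the higher modes of $u$ to be zero and yields the desired polynomial form.

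The main obstacle is the careful bookkeeping in the magnetic Pestov identity: in \cite{DPSU07} the identity is derived with explicit boundary terms assuming $u$ vanishes on $\partial SM$, whereas here one must verify both that the divergence contributions genuinely integrate to zero on the closed $SM$ and that the remaining curvature-Lorentz integrand has precisely the sign structure needed for Lemma \ref{lemma:k_mu} to be applied fiberwise along closed magnetic geodesics. A secondary point, which is routine but should be checked, is that the resulting quadratic form controls all higher Fourier modes (not merely the top one), and that the homogeneous extension of $u$ off the zero section has sufficient regularity for the semibasic tensor calculations; both follow from the smoothness of $u$ guaranteed by Liv\v sic.
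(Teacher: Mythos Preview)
Your overall strategy—smooth Liv\v sic to produce $u$ with $Fu=\pi_2^*p+\pi_1^*q$, then an integrated magnetic Pestov identity combined with the index-form estimate of Lemma~\ref{lemma:k_mu}—matches the paper's. The difference in endgame (you aim to show $[p,q]$ is potential; the paper takes $[p,q]$ solenoidal and shows it vanishes) is immaterial, the two formulations being equivalent via Theorem~\ref{thm:ps_decomposition}.

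There is, however, a genuine gap at the point where you invoke Lemma~\ref{lemma:k_mu}. That lemma is an \emph{integral} inequality along a magnetic geodesic; it is not a pointwise bound, so it cannot control the curvature--Lorentz term ``fiberwise'' or ``pointwise'' as you write. What Lemma~\ref{lemma:k_mu} actually delivers, when applied along a closed orbit $\gamma$ of period $T$, is that the Pestov-type integrand $f$ satisfies $\int_0^T f(\varphi_t(x,v))\,dt\ge 0$. You are then faced with two facts—$\int_{SM}f\,d\Sigma\le 0$ from the integrated Pestov identity, and $If(\gamma)\ge 0$ for every closed orbit $\gamma$—and the passage from these to the vanishing of the relevant derivative of $u$ is not automatic: closed orbits have Liouville measure zero, so one cannot simply disintegrate the $SM$-integral over them, and one also needs to recover the equality case of Lemma~\ref{lemma:k_mu} on each closed orbit separately.

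The paper closes this gap with the \emph{non-negative Liv\v sic theorem} (Lemma~\ref{lemma:nnl}), which your proposal does not mention. Since $If(\gamma)\ge 0$ for every closed orbit, Lemma~\ref{lemma:nnl} produces a H\"older $U$ with
\[
\int_0^s f(\varphi_t x)\,dt \;\ge\; U(\varphi_s x)-U(x)\qquad\text{for all }x\in SM,\ s>0.
\]
Integrating over $SM$ and using flow-invariance of the Liouville measure kills the coboundary, forcing $\int_{SM}f\,d\Sigma\ge 0$; combined with the Pestov inequality this gives $\int_{SM}f\,d\Sigma=0$, hence equality almost everywhere in the display above, and in particular equality in the index-form inequality along closed orbits. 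Only then does the rigidity clause of Lemma~\ref{lemma:k_mu} (resp.\ Lemma~\ref{lemma:index}) yield the vanishing you need. Without this dynamical bridge the argument does not close; the phrase ``controlled pointwise, via the index-form estimate'' is precisely where your sketch breaks.
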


\begin{proof}
    Take $H=[p,q]$ be a solenoidal tensor in the kernel of $I_{2}$. We need to show that $p$ and $q$ are zero. By the standard argument, we obtain $u$ with $Fu=\pi_{2}^{*}p+\pi_{1}^{*}q$. In first place, we have the following inequality
\begin{equation} \label{eq:int_curv_mag}
    \int_{SM}\{ |\mathbf{F}u|^{2}-2 (\mathcal{C}(\stackrel{\mathtt{h}}{\nabla}u),\stackrel{\mathtt{h}}{\nabla}u ) -( Y(\stackrel{\mathtt{h}}{\nabla}u),v )^{2}-(n+2)|Y(\stackrel{\mathtt{h}}{\nabla}u)|^{2} \}d\Sigma^{2n-1} \leq 0.
\end{equation}
Here $d\Sigma^{2n-1}$ denotes the Liouville measure in $SM$. This is just \cite{DPSU07}*{Equation (5.23)}, which follows by algebraic manipulations by the integration of certain Pestov identities obtained in \cite{DP08}. On the other hand, Lemma \ref{lemma:k_mu} implies that over any closed magnetic geodesic $\gamma \colon [0,T] \to M$:
\begin{equation} \label{eq:int_index}
    \int_{0}^{T}\{ |\mathbf{F}u|^{2}-2(\mathcal{C}(\stackrel{\mathtt{h}}{\nabla}u),\stackrel{\mathtt{h}}{\nabla}u) -(Y(\stackrel{\mathtt{h}}{\nabla}u),v )^{2}-(n+2)|Y(\stackrel{\mathtt{h}}{\nabla}u)|^{2} \}dt\geq 0,
\end{equation}
on any unit speed magnetic geodesic. By the non-negative Liv\v sic theorem (Lemma \ref{lemma:nnl}), there exists $U$ H\"older so that for any $x$ and any $s$ we have 
\begin{equation} \label{eq:NNL}
    \int_{0}^{s}\{ |\mathbf{F}u|^{2}-2( \mathcal{C}(\stackrel{\mathtt{h}}{\nabla}u),\stackrel{\mathtt{h}}{\nabla}u ) - ( Y(\stackrel{\mathtt{h}}{\nabla}u),v )^{2}-(n+2)|Y(\stackrel{\mathtt{h}}{\nabla}u)|^{2} \}dt-U(\varphi_{s}x)-U(x) \geq 0.
\end{equation}
It follows from \eqref{eq:int_curv_mag} and that the Liouville measure is invariant under the magnetic flow that the integral over $SM$ of \eqref{eq:NNL} is zero. Hence, we obtain equality in \eqref{eq:NNL}, and taking $s=T$, we find equality in \eqref{eq:int_index}. The Index Lemma (Lemma \ref{lemma:index}) now gives that $\stackrel{\mathtt{h}}{\nabla}u=0$. Hence, since $u$ is independent of $v$, we can write $u=f \circ \pi$ where $f$ is a smooth function on the base manifold $M$. Using the fact that $d\pi(F)=v$, the chain rule implies that $Fu=(\pi_{1}^{*}df)(v)$. This shows that $p=0$ and $q=df$. To prove that $q=0$, we use that the pair is solenoidal: Lemma \ref{lemma:adjoint} (see also Remark \ref{rmk:div_m=2}) implies that $\Delta f=0$. Since $M$ is closed, this gives that $f$ is constant, and therefore $q=0$. 
\end{proof}

\begin{rmk}
    We point out that the hypothesis about $k_{\mu}$ holds when $(M,g)$ is negatively curved and the $C^{2}$ norm of $\alpha$ is small enough.
\end{rmk}

\subsection{Stability}

Our last task is to prove the stability of the magnetic X-ray transform that we promised in the introduction.

\begin{proof} [Proof of Theorem \ref{thm:stability_xray}]
    In first place, observe that we can assume, without loss of generality, that $f$ is solenoidal. As usual, let $f=[p,q]$. By the Approximate Liv\v sic Theorem (Lemma \ref{lemma:approx}), we can write 
\begin{equation} \label{eq:ap_liv}
 \pi_{m}^{*}p+\pi_{m-1}^{*}q=Fu+h   
\end{equation}
where 
\begin{equation} \label{eq:ap_liv_bound}
\|h\|_{C^{\beta}}\leq C\|I_{m}f\|_{\ell^{\infty} (\mathcal{C})}^{\tau} \|\pi_{m}^{*}p+\pi_{m-1}^{*}q\|_{C^{1}}^{1-\tau},
\end{equation}
for some $\beta,C,\tau>0$. Then, for $s<\beta$:
\begin{align*}
    \|f\|_{\mathbf{H}^{s-1}} &\lesssim \| N_{m}f\|_{\mathbf{H}^{s}} \\
    & = \left\| \begin{pmatrix}
    \pi_{m *}(\Pi+1\otimes 1) (\pi_{m}^{*}p+\pi_{m-1}^{*}q) \\ \pi_{m-1 *}(\Pi+1\otimes 1) (\pi_{m}^{*}p+\pi_{m-1}^{*}q) \end{pmatrix}  \right\|_{\mathbf{H}^{s}} \\
    & = \left\| \begin{pmatrix}
    \pi_{m *}(\Pi+1\otimes 1) (Fu+h) \\ \pi_{m-1 *}(\Pi+1\otimes 1) (Fu+h) \end{pmatrix}  \right\|_{\mathbf{H}^{s}} \\
    & = \left\| \begin{pmatrix}
    \pi_{m *}(\Pi+1\otimes 1) h \\ \pi_{m-1 *}(\Pi+1\otimes 1) h \end{pmatrix}  \right\|_{\mathbf{H}^{s}} \\
    & \lesssim \|h\|_{H^{s}} \\
    & \lesssim \|h\|_{C^{\beta}} \\
    & \lesssim \|I_{m}f\|_{\ell^{\infty} (\mathcal{C})}^{\tau} \|\pi_{m}^{*}p+\pi_{m-1}^{*}q\|_{C^{1}}^{1-\tau} \\
    & \lesssim \|I_{m}f\|_{\ell^{\infty} (\mathcal{C})}^{\tau} \|f\|_{C^{1}}^{1-\tau},
\end{align*}
where in the first step we used Proposition \ref{prop:cont}, in the third step we used \eqref{eq:ap_liv}, in the fourth we used that $\Pi F=0$ (Lemma \ref{lemma:pi_properties} (i)) and the fact that the integral of $Fh$ vanishes (which follows from Stokes' theorem, the fact that $M$ is closed and that $F$ preservs the Liouville form), in the fifth step we used Proposition \ref{prop:bound_tau}, and in the seventh we used \eqref{eq:ap_liv_bound}.
\end{proof}

\bibliographystyle{alpha}
\bibliography{bib}

\end{document}